\documentclass[12pt]{amsart}
\usepackage{amssymb}
\usepackage{graphicx}

\textwidth144mm
    \textheight238mm
    \oddsidemargin7.5mm
    \evensidemargin2.5mm
    \topmargin 0mm
    \parskip4pt plus2pt minus2pt
    \parindent0mm

\newcommand{\N}{{\mathbb N}}
\newcommand{\C}{{\mathbb C}}
\newcommand{\Z}{{\mathbb Z}}

\newcommand{\D}{{\mathbb D}}

\newcommand{\wt}{\widetilde}

\newcommand{\tef}{transcendental entire function}

\newcommand{\mconn}{multiply connected}

\newcommand{\mcfc}{multiply connected wandering domain}
\newcommand{\sw}{spider's web}

\newcommand\eps{\varepsilon}
\newcommand\qfor{\;\;\text{for }}
\newcommand{\diam}{{\rm diam}}
\newcommand{\ol}{\overline}
\newcommand{\ucc}{unbounded closed connected}
\newcommand{\ucm}{unbounded continuum}
\newcommand{\uca}{unbounded continua}
\newcommand{\FC}{\Phi}


\theoremstyle{plain}
\newtheorem{theorem}{Theorem}[section]

\newtheorem{lemma}[theorem]{Lemma}     
\newtheorem{corollary}[theorem]{Corollary}\theoremstyle{definition}

\theoremstyle{remark}

\theoremstyle{problem}

\theoremstyle{question}
\newtheorem{question}{Question}[section]
\theoremstyle{example}


\begin{document}

\title[Eremenko points and the structure of the escaping set]{Eremenko points and the structure of the escaping set}

\author{P. J. Rippon}
\address{Department of Mathematics and Statistics\\
The Open University \\
Walton Hall\\
Milton Keynes MK7 6AA\\
UK}
\email{p.j.rippon@open.ac.uk}

\author{G. M. Stallard}
\address{Department of Mathematics and Statistics\\
The Open University \\
Walton Hall\\
Milton Keynes MK7 6AA\\
UK}
\email{g.m.stallard@open.ac.uk}



\subjclass{30D05, 37F10.\newline\hspace*{.32cm} Both authors were
supported by EPSRC grant EP/K031163/1.}


\begin{abstract}
Much recent work on the iterates of a transcendental entire function~$f$ has been motivated by Eremenko's conjecture that all the components of the escaping set $I(f)$ are unbounded. Here we show that if $I(f)$ is disconnected, then the set $I(f)\setminus D$ has uncountably many unbounded components for any open disc~$D$ that meets the Julia set of~$f$. For the set $A_R(f)$, which is the `core' of the fast escaping set, we prove the much stronger result that for some $R>0$ either $A_R(f)$ is connected and has the structure of an infinite spider's web or it has uncountably many components each of which is unbounded. There are analogous results for the intersections of these sets with the Julia set when no multiply connected wandering domains are present, but strikingly different results when they are present. In proving these, we obtain the unexpected result that multiply connected wandering domains can have complementary components with no interior, indeed uncountably many.
\end{abstract}

\maketitle

\section{Introduction}
\setcounter{equation}{0}
Let~$f$ be a {\tef} and denote by $f^{n},\,n=0,1,2,\ldots\,$, the $n$th iterate of~$f$. The {\it Fatou set} $F(f)$ is defined to be the set of points $z \in \C$ such that $(f^{n})_{n \in \N}$ forms a normal family in some neighborhood of~$z$, and the {\it Julia set} of~$f$ is the complement of $F(f)$. The components of $F(f)$ are called {\it Fatou components}. An introduction to the properties of these sets can be found in~\cite{wB93}.

The {\it escaping set}
\[
 I(f) = \{z: f^n(z) \to \infty \mbox{ as } n \to \infty \}
\]
was first studied in detail by Eremenko~\cite{aE89}, who made what is known as `Eremenko's conjecture', namely that all the components of $I(f)$ are unbounded. This conjecture remains unsolved in spite of much attention and many partial results; see \cite{RRRS}, \cite{lR07} and \cite{RS09}, for example.

The strongest general results about Eremenko's conjecture have been obtained by using the subset of $I(f)$ called the {\it fast escaping set} $A(f)$, introduced in \cite{BH99}, which can be defined as follows; see \cite{RS09}. First put
\begin{equation}\label{ARfdef}
A_R(f) = \{z:|f^n(z)| \geq M^n(R), \text{ for } n \in \N\},
\end{equation}
where $M(r)=M(r,f)=\max \{|f(z)|:|z|=r\},\;r>0$, $M^n(r)=M^n(r,f)$ denotes the $n$-th iterate of $r\mapsto M(r,f)$, and $R>0$ is so large that $M(r)>r$ for $r\ge R$, and then put
\[
A(f)=\{z: {\rm for\;some}\; \ell\in \N, f^{\ell}(z)\in A_R(f)\}.
\]
This definition of $A(f)$ is independent of~$R$.

The set $A(f)$ has stronger properties than $I(f)$ and these have been used to prove significant facts about the structure of $I(f)$, such as the following:
\begin{itemize}
\item
$I(f)$ has at least one unbounded component -- this follows immediately from the fact that all the components of $A(f)$ (and indeed of $A_R(f)$) are unbounded \cite[Theorem~1.1]{RS09};
\item
more precisely, either $I(f)$ is connected or it has infinitely many unbounded components \cite[Theorem~1.3]{RS17};
\item
$I(f)\cup\{\infty\}$ is connected \cite[Theorem~4.1]{RS11}.
\end{itemize}
In general, the topological structure of $I(f)$ may be highly complicated; see \cite{lR09}, \cite{RRRS} and \cite{lR11}, for example. However, there are many classes of {\tef}s for which $I(f)$ is connected, in which case Eremenko's conjecture holds in a particularly strong way; see \cite{lR10}, \cite{lR11}, \cite{xJ11} and  \cite{RS09}, for example. In \cite{RS09} we defined a set~$S$ to be an {\it (infinite) spider's web} if~$S$ is connected and there exists a sequence of bounded simply connected domains $G_n, n\in \N$, such that
\[
G_n\subset G_{n+1}\;\text{and}\; \partial G_n \subset S,\;\text{for } n\in\N, \quad \text{and}\quad \bigcup_{n=1}^{\infty}G_n=\C,
\]
and we showed that if the set $A_R(f)$ is a spider's web, for some $R>0$, then so are $A(f)$ and $I(f)$, and moreover $I(f)$ has many strong properties; see also \cite{jO12}.

There are many classes of entire functions for which $A_R(f)$ is a spider's web \cite[Section~8]{RS09}, so $A(f)$ and $I(f)$ are connected for all such functions. On the other hand, there exist entire functions for which $A_R(f)$ is disconnected but $A(f)$ and $I(f)$ are connected, such as $f(z)=\cosh^2 z$ \cite[Section~1]{RS09} and $f(z)=e^z$ \cite{lR11}. There also exist entire functions for which $A_R(f)$ and $A(f)$ are disconnected but $I(f)$ is connected, such as Fatou's function $f(z)=z+1+e^{-z}$ \cite[Exemple~1]{pF26}. Note that for Fatou's function $I(f)$ is a spider's web \cite{vE16}, whereas for $f(z)=e^z$ it is not \cite[Example~2]{OS}.

It is natural to ask how many components the sets $I(f)$, $A(f)$ and $A_R(f)$ can have when these sets are disconnected. For all known examples the answer is `uncountably many' in each case, but in general we know only that for $I(f)$ and $A(f)$ the answer is `infinitely many'; see \cite[Theorem~5.2]{RS11}, where the proof is based on the blowing-up property of $J(f)$. For $J(f)$ itself it is known that either $J(f)$ is connected or it has uncountably many components; see \cite[Theorem~B]{BD00}, where the proof makes strong use of the fact that $J(f)$ is closed and completely invariant.

Even though $I(f)$ can be much more complicated topologically than $J(f)$, it is natural to conjecture that if $I(f)$ is disconnected, then it has uncountably many components, and that the same is true for $A(f)$ and $A_R(f)$. In this paper we make significant progress towards proving these statements, with a particularly striking result for $A_R(f)$. Our first result concerns $I(f)$. Here we use the term {\it {\ucm}} to denote an {\ucc} set.
\begin{theorem}\label{I(f)}
Let~$f$ be a {\tef} and suppose that $I(f)$ is disconnected or, more generally, that there exists an {\ucm} in $I(f)^c$. If~$D$ is any open disc meeting $J(f)$, then the set $I(f)\setminus D$ has uncountably many unbounded components that meet $\partial D$ and these components are separated in $\C\setminus D$ by {\uca} in $I(f)^c$.
\end{theorem}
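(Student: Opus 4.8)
We may assume that there is an {\ucm} $\Gamma$ in $I(f)^c$: this is precisely the hypothesis in the general case, and if $I(f)$ is disconnected it is a consequence of the connectedness of $I(f)\cup\{\infty\}$ \cite[Theorem~4.1]{RS11}, since then both sides of any separation of $I(f)$ must be unbounded and one extracts the required {\ucm} from their common boundary. The two basic tools are the complete invariance of $I(f)^c$, which gives that $f^{-n}(\Gamma)\subseteq I(f)^c$ is closed for every $n$; and the blowing-up property of $J(f)$, namely that for any open set $U$ meeting $J(f)$ and any $\rho>0$ there is $n$ with $f^n(U)\supseteq\ol{B(0,\rho)}$, where moreover the points of $\ol{B(0,\rho)}$ may be taken in $J(f)$ when their preimages in $U$ are. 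The strategy is: (i) to \emph{transplant} $\Gamma$ into the disc $D$, producing an {\ucm} in $I(f)^c$ that crosses $\partial D$; (ii) to run this transplanting along a Cantor set of backward itineraries, interleaved with unbounded pieces of $A(f)\subseteq I(f)$, so as to obtain uncountably many pairwise disjoint {\uca} in $I(f)^c$, each crossing $\partial D$, with unbounded components of $I(f)\setminus D$ forced to lie between consecutive ones.

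For (i), fix $\rho$ so large that both $\Gamma$ and $A_R(f)$ meet $\ol{B(0,\rho)}$, where $R$ is as in the definition of $A_R(f)$. Since $D$ meets $J(f)$, the blowing-up property gives $n$ with $f^n(D)\supseteq\ol{B(0,\rho)}$, so $D$ meets both $f^{-n}(\Gamma)\subseteq I(f)^c$ and $f^{-n}(A_R(f))\subseteq A(f)\subseteq I(f)$. The step requiring care is that $f^{-n}(\Gamma)$ has an \emph{unbounded} component $\Lambda$ meeting $D$: this is an instance of a pullback lemma for {\uca} under holomorphic maps, proved by showing that the component of $f^{-n}(\Gamma)$ through a chosen preimage point cannot be bounded --- using that $f^n$ is open and light, so that, by a Sto\"ilow-type argument together with an exhaustion of $\Gamma$ by subcontinua, such a component maps onto an {\ucm} within $\Gamma$; alternatively one argues by a boundary-bumping contradiction. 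As $\Lambda$ is unbounded and $D$ is a disc, $\Lambda\setminus\ol D$ has an unbounded component whose closure meets $\partial D$, giving an {\ucm} in $I(f)^c\cap(\C\setminus D)$ that meets $\partial D$, as required; a completely parallel argument produces, through any prescribed point of $D\cap J(f)$, an unbounded component of $A(f)\subseteq I(f)$ crossing $\partial D$.

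For (ii), choose inside $D$ a small closed disc $D'$ meeting $J(f)$ and set up a two-branch subsystem: disjoint closed subdiscs $D'_0,D'_1\Subset D'$ meeting $J(f)$, together with iterates $g_i=f^{m_i}$ with $g_i(D'_i)\supseteq\ol{D'}$ (obtained by applying the blowing-up property to $D'_0$ and to $D'_1$ separately), so that an independent copy of the transplanting in (i) is available inside each $D'_i$. Iterating, one builds for every finite binary word $w$ a closed disc $D'_w$ meeting $J(f)$ with $D'_{w0},D'_{w1}\Subset D'_w$ disjoint, an {\ucm} $\Lambda_w\subseteq I(f)^c$ crossing $\partial D$ with its root in $D'_w$, and, planted next to $\Lambda_w$ on a prescribed side, an unbounded component $\Theta_w$ of $A(f)\subseteq I(f)$ also crossing $\partial D$; one arranges, using that the continua and escaping pieces created at stage $w$ are placed in prescribed disjoint complementary regions of the stage-$w$ configuration (which later stages do not re-enter), that the $\Lambda$'s along distinct branches are disjoint, that the $\Theta$'s along distinct branches lie in distinct complementary regions, and that $\Lambda_w$ separates the two sides within $\C\setminus D$. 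Passing to the branches $t\in\{0,1\}^{\N}$ yields, after taking suitable Hausdorff limits, uncountably many pairwise disjoint {\uca} $\Lambda_t\subseteq I(f)^c$ meeting $\partial D$, and for each $t$ an unbounded component $\mathcal I_t$ of $I(f)\setminus D$ lying in a complementary region of the $\Lambda_s$ whose boundary meets $\Lambda_t$; for $s\neq t$, $\mathcal I_s$ and $\mathcal I_t$ are separated in $\C\setminus D$ by $\Lambda_s$ or $\Lambda_t$, so they are distinct, and all of this is the assertion of the theorem.

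The main obstacle is the bookkeeping in step (ii). One must simultaneously (a) keep the uncountably many transplanted continua $\Lambda_t$ pairwise disjoint --- which is why the backward itineraries have to be genuinely independent, and is delicate because a preimage of a continuum may itself be connected or wildly branched, so disjointness must be enforced geometrically by confining each new continuum to a prescribed complementary region of the previous stage --- and (b) guarantee that \emph{every} one of the uncountably many branches actually carries an unbounded component of $I(f)$, and that distinct branches carry distinct ones. The latter cannot be obtained for free, since an arbitrary unbounded planar region need not meet $I(f)$ at all and, after taking limits, the naive ``gaps'' between the $\Lambda_t$ are only countably many; the escaping components must therefore be engineered into the construction stage by stage via the pieces $\Theta_w$ of $A(f)$, with a check that these persist in the limit, meet $\partial D$, are not pushed off to infinity, and are kept separated by the $\Lambda$'s within $\C\setminus D$ (not merely within $\C$). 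A secondary obstacle is making the pullback lemma of step (i) precise, which requires the openness and lightness of $f^n$, since components of preimages of continua behave badly for general continuous maps.
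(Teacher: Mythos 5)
Your step (i) is essentially sound, and in fact it is the same observation the paper uses: every component of $f^{-n}(\Gamma)$ is unbounded because $f^n$ is open and a bounded component could be enclosed by a Jordan curve avoiding $f^{-n}(\Gamma)$ (Lemma~\ref{Newman}\,(b)), giving a contradiction with the unboundedness of $\Gamma$; the Sto\"{\i}low-type machinery is not needed. The genuine gap is in step (ii), which is where the whole content of the theorem lies. The sets $\Lambda_w$ are components of preimages of $\Gamma$ under various iterates, and these are determined globally by the dynamics; choosing the root disc $D'_w\subset D$ gives no control at all over where the unbounded part of $\Lambda_w$ goes once it leaves $D$ --- and that is exactly where the pairwise disjointness, the cyclic ordering along $\partial D$, and the separation of escaping components must be arranged. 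Saying that one ``confines each new continuum to a prescribed complementary region of the previous stage'' and ``plants'' a piece $\Theta_w$ of $A(f)$ on a prescribed side is to assert precisely what has to be proved, and no tool is offered for it. The paper's proof supplies such a tool: the pullbacks are performed not in arbitrary discs meeting $J(f)$ but inside Wiman--Valiron quadrilaterals on which $f$ is \emph{univalent} onto a cut annulus (Theorems~\ref{quads} and~\ref{Epoints}), so that the preimages of an escape channel $(\Gamma^-,\FC,\Gamma^+)$ appear in a known order along the outer edge of the quadrilateral and can be truncated, via Corollary~\ref{Newman-cor}, to give disjoint channels meeting a fixed circle (Lemma~\ref{ucc-pullback}); iterating produces nested entry arcs on the circle $C_0$, the binary choices are made there, and only at the very end is the structure transported to $\partial D$ by a single application of the blowing-up property. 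Some substitute for this ordered, univalent pullback device is indispensable; the tree of subdiscs inside $D$ does not provide it, because the combinatorics you need lives outside $D$, on and beyond $\partial D$.

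There are two further points at which the argument, as written, breaks. First, the limiting step is invalid: neither $I(f)^c$ nor $A(f)$ is closed, so a Hausdorff limit of continua lying in $I(f)^c$ (your $\Lambda_t$) need not lie in $I(f)^c$, and likewise for limits of pieces of $A(f)$. The paper takes Hausdorff limits only of continua contained in $A_R(f)$, whose closedness is essential (this is said explicitly in the introduction), while the separating continua in $I(f)^c$ are finite-stage preimages of the given \emph{closed} set $\Gamma$, never limits. Second, a single unbounded continuum meeting $\partial D$ does not in general separate $\C\setminus D$ (a ray from $\partial D$ to infinity does not), so the claim that ``$\Lambda_w$ separates the two sides within $\C\setminus D$'' cannot stand as stated; in the paper the separation of two escaping components is achieved by the \emph{pair} of disjoint channel walls $\Gamma^\pm$ meeting the circle on either side of the relevant points, and it is this interleaving of $A_R(f)$-continua with pairs of $I(f)^c$-continua, carried through every stage of the induction, that makes the final separation statement work.
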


{\it Remarks}\; 1. Whenever $I(f)$ is disconnected, there exists a closed connected set $\Gamma\subset I(f)^c$ such that $I(f)$ meets at least two complementary components of $\Gamma$; see \cite{lR10}. The set~$\Gamma$ can have no bounded complementary components, by \cite[Theorem~4.1]{RS11}, so it must be an {\ucm}. Thus $I(f)^c$ contains an {\ucm} whenever $I(f)$ is disconnected.

2. Theorem~\ref{I(f)} leaves open the possibility that $I(f)$ could be disconnected and yet have only countably many components, but it shows that in this case the topological structure of $I(f)$ must be extremely complicated.

3. In \cite{ORS16}, we defined a {\it weak spider's web} to be a set with no {\ucm} in its complement. Thus the hypothesis of Theorem~\ref{I(f)} is that $I(f)$ is not a weak spider's web.

There is an analogous result concerning the set $A(f)$, as follows.
\begin{theorem}\label{A(f)}
Let~$f$ be a {\tef} and suppose that $A(f)$ is disconnected or, more generally, that there exists an {\ucm} in $A(f)^c$. If~$D$ is any open disc meeting $J(f)$, then the set $A(f)\setminus D$ has uncountably many unbounded components that meet $\partial D$ and these components are separated in $\C\setminus D$ by {\uca} in $A(f)^c$.
\end{theorem}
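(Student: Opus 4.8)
The plan is to follow the proof of Theorem~\ref{I(f)} with $I(f)$ replaced by $A(f)$ throughout. That proof uses only four properties of the escaping set: complete invariance; unboundedness of every component; the inclusion $J(f)\subseteq\ol{A(f)}$ (which, with $D$ open and meeting $J(f)$, gives $A(f)\cap D\ne\emptyset$); and the blowing-up property of $J(f)$. All four hold for $A(f)$, since $A(f)$ is completely invariant, all its components are unbounded by \cite[Theorem~1.1]{RS09}, and $J(f)\subseteq\ol{A(f)}$ because $A(f)$ is backward invariant and contains a non-exceptional point whose backward orbit is dense in $J(f)$. By the same reasoning as in Remark~1, it suffices to treat the hypothesis in the form that $A(f)^c$ contains an {\ucm}~$\Gamma$: when $A(f)$ is disconnected there is, by the argument of \cite{lR10}, a closed connected set in $A(f)^c$ meeting at least two of its complementary components, each of which meets $A(f)$, and this set has no bounded complementary component because $A(f)\cup\{\infty\}$ is connected (the counterpart for $A(f)$ of \cite[Theorem~4.1]{RS11}).

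Fix an open disc~$D$ meeting $J(f)$ and a point $\zeta\in D\cap J(f)$. After adjoining to~$\Gamma$ its bounded complementary components we may take $\C\setminus\Gamma$ to be {\sconn}, and, when $A(f)$ is disconnected, after also incorporating a second {\ucm} in $A(f)^c$ we may assume that $\Gamma\cup\{\infty\}$ contains a Jordan curve in $\hat\C$ through~$\infty$, so that $\Gamma$ separates the plane into regions two of which meet $A(f)$. The central step is to transport this picture, via the blowing-up property, into arbitrarily small discs about points of $J(f)$. Given a small closed disc $\delta\subseteq D$ centred at a point of $J(f)$, and $\rho>0$ so large that $\Gamma$ meets $B(0,\rho)$ and its exterior, one has $f^m(\delta)\supseteq\ol{B(0,\rho)}$ for all large~$m$; and since backward orbits are dense in $J(f)$ and both $A(f)$ and $A(f)^c$ are completely invariant, $\delta$ contains points of $(f^m)^{-1}(A(f))=A(f)$ lying in different components of $\delta\setminus(f^m)^{-1}(\Gamma)$, where $(f^m)^{-1}(\Gamma)\subseteq A(f)^c$. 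One must then extract from $(f^m)^{-1}(\Gamma)$ a genuine {\ucm} in $A(f)^c$ separating two such points of $A(f)$ in $\C\setminus\delta$; this is achieved by following a suitable component of the preimage of a {\sconn} neighbourhood of a crosscut-like sub-continuum of~$\Gamma$ and then continuing it to~$\infty$ along further preimages of~$\Gamma$.

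With this transport step in hand, one constructs the required uncountable family. Starting from a component~$C$ of $A(f)$ meeting~$D$, one builds inductively, for each dyadic rational $q\in[0,1]$, pairwise disjoint {\uca} $\Lambda_q\subseteq A(f)^c\setminus D$ meeting~$\partial D$, ordered along~$\partial D$ like the dyadic rationals, such that $\Lambda_q\cup\ol D$ separates~$\C$ into two parts each containing an arm of~$C$; each new $\Lambda_q$ is inserted, using the transport step applied to a small disc meeting $J(f)$ in a region of the current configuration, so as to cut off one such arm. Then for each of the uncountably many irrational $t\in[0,1]$ the region pinned between $\{\Lambda_q:q<t\}$ and $\{\Lambda_q:q>t\}$ contains a distinct unbounded component of $A(f)\setminus D$ meeting~$\partial D$, namely an arm of~$C$, and these components are separated in $\C\setminus D$ by the {\uca}~$\Lambda_q$, as required. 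I expect the main obstacle to be the transport step, specifically the passage from a preimage of~$\Gamma$ to an \emph{unbounded} separating continuum in $A(f)^c$: since $f^m$ is not a proper map and may have asymptotic values near~$\partial B(0,\rho)$, controlling the topology of the relevant preimage components and splicing them coherently across successive levels, so that the resulting continuum separates the two chosen points of $A(f)$ all the way to~$\infty$, is delicate. A secondary point is to guarantee that the arm of~$C$ trapped in each limiting region is non-empty, which is why one tracks the arms of a single component~$C$ cut by~$D$ rather than distinct components of $A(f)$.
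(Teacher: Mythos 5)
Your opening plan --- run the proof of Theorem~\ref{I(f)} with $A(f)$ in place of $I(f)$ --- is exactly the paper's route: the authors prove Theorem~\ref{A(f)} by observing that the argument of Section~\ref{I(f)proof} goes through verbatim, because the separating continua are pulled back within the completely invariant set $A(f)^c$ while the continua trapped between them already lie in $A_R(f)\subset A(f)$. But the proof you then sketch is not that argument, and it has a genuine gap at its centre, which you yourself flag. Your list of ``the only four properties used'' omits the two ingredients the paper actually leans on: the Eremenko-point/Wiman--Valiron construction and the closedness of $A_R(f)$ (which the authors say is essential). The paper does not transport $\Gamma$ into small discs via the blowing-up property and then extract separating continua from $(f^m)^{-1}(\Gamma)$; that extraction --- an unbounded continuum in $A(f)^c$ separating two chosen points of $A(f)$ ``all the way to $\infty$'', with $f^m$ not proper --- is precisely the hard point, and your proposal gives no proof of it. Instead the paper pulls back \emph{escape channels} $(\Gamma^-,\FC,\Gamma^+)$ through the quadrilaterals $Q_{n,j}$ of Theorem~\ref{Epoints}, on which $f$ is univalent onto a cut annulus, so the preimage components are controlled, shown unbounded via Lemma~\ref{Newman}, and truncated by Corollary~\ref{Newman-cor} so as to hang off the fixed circles $C_n$; the blowing-up property is used only once, at the very end, to pass from $\{z:|z|\le 2r_0\}$ to the given disc~$D$.

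The second gap is the non-emptiness (and unboundedness) of the limiting components. Trapping ``arms of a single component~$C$'' between the dyadic families $\Lambda_q$ does not guarantee that the region pinned between $\{\Lambda_q:q<t\}$ and $\{\Lambda_q:q>t\}$ contains any point of $A(f)$ on $\partial D$, let alone an unbounded component of $A(f)\setminus D$: since $A(f)$ is not closed you cannot pass to limits of points or continua of $A(f)$ in the nested regions, and nothing in your induction supplies, at every stage and in the limit, a surviving arm of~$C$ in each region. This is exactly where the paper uses the middle entries $\FC_k(0,n)$ of the channels: they are unbounded continua in the \emph{closed} sets $A_{M^n(R)}(f)$, their entries form a nested family of arcs of $C_0$ whose intersection has uncountably many singleton components $\{\zeta\}$, and a Hausdorff-limit argument together with the closedness of $A_R(f)$ yields an unbounded continuum $\FC_\zeta\subset A_R(f)\subset A(f)$ through each such $\zeta$, flanked on both sides by unbounded continua of $A(f)^c$. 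Without this machinery, or a genuine substitute for it, both your transport step and the existence of uncountably many unbounded components meeting $\partial D$ remain unproved.
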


For the set $A_R(f)$ we have a much stronger result. To state this, we define
\begin{equation}\label{R(f)}
R(f)=\inf \{R\in [0,\infty): M(r)>r, \text{ for }r \ge R\},
\end{equation}
which is  the least number such that $A_R(f)$ can be defined for all $R>R(f)$.

We recall that if $A_R(f)$ is a {\sw} for some $R>R(f)$, then $A_R(f)$ is a {\sw} for {\it all} $R>R(f)$; see \cite[Lemma~7.1(d)]{RS09}. As noted above there are many classes of entire functions for which $A_R(f)$ is a spider's web, for $R>R(f)$, and there are also many classes for which $A_R(f)$ has uncountably many components. The following theorem shows that for any entire function exactly one of these two extreme situations must occur for many values of~$R$.
\begin{theorem}\label{AR(f)}
Let~$f$ be a {\tef} and let $R(f)$ be given by \eqref{R(f)}. Then either $A_R(f)$ is a {\sw} for all $R>R(f)$ or there is a dense set of values of $R\in (R(f),\infty)$ for which $A_R(f)$ has uncountably many components.
\end{theorem}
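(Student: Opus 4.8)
The plan is as follows. If $A_R(f)$ is a spider's web for some $R>R(f)$, then by \cite[Lemma~7.1(d)]{RS09} this holds for all $R>R(f)$ and the first alternative of the theorem is satisfied. So suppose instead that $A_R(f)$ is not a spider's web for any $R>R(f)$; then, by the characterisation in \cite{RS09} (that $A_R(f)$ is a spider's web precisely when it has a bounded complementary component), every complementary component of $A_R(f)$ is unbounded, and in particular $A_R(f)^c$ contains an unbounded continuum, for each $R>R(f)$. It remains to exhibit a dense set of $R\in(R(f),\infty)$ for which $A_R(f)$ has uncountably many components.

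My approach is to run the blow-up construction behind Theorem~\ref{A(f)} at the level of $A_R(f)$ rather than of $A(f)$. The crucial point that makes this possible, even though $A_R(f)$ is not completely invariant, is that $A_R(f)\subset f^{-n}(A_R(f))$ for every $n\ge 0$: indeed, if $|f^m(z)|\ge M^m(R)$ for all $m$, then $|f^m(z)|\ge M^{m-n}(R)$ for all $m\ge n$, so $f^n(z)\in A_R(f)$. Hence, for an unbounded continuum $\Gamma\subset A_R(f)^c$ one has $f^{-n}(\Gamma)\subset f^{-n}(A_R(f))^c\subset A_R(f)^c$, so every backward orbit of $\Gamma$ stays in $A_R(f)^c$. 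Combining this with the blow-up property of $J(f)$ at a point of $D\cap J(f)$, for an open disc $D$ meeting $J(f)$, I would obtain, exactly as in the proof of Theorem~\ref{A(f)}, uncountably many pairwise disjoint unbounded continua in $A_R(f)^c$, indexed by a Cantor set, which together separate $\C\setminus D$ into uncountably many regions each meeting $\partial D$.

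The delicate point -- and the reason the conclusion concerns a dense, rather than arbitrary, set of $R$ -- is to convert this into a statement about the components of $A_R(f)$ itself, as the theorem requires, rather than about the escaping set in $\C\setminus D$. Unlike $A(f)$, the set $A_R(f)$ need not be dense in $J(f)$ -- it omits every repelling periodic point of $f$ -- so $D$ cannot be placed arbitrarily: I would centre $D$ at a point $\zeta\in J(f)$ lying on $\partial A_R(f)$ at which the blow-up can be controlled, and then show that restoring $D$ does not amalgamate the uncountably many pieces of $A_R(f)\setminus D$ constructed above into a single component of $A_R(f)$. This in turn requires analysing the local structure of $A_R(f)$ near $\zeta$; using that $R\mapsto A_R(f)$ is decreasing and left-continuous, with $A_R(f)=\bigcap_{\rho<R}A_\rho(f)$, I would show that the set of $R$ for which a suitable base point $\zeta$ exists, and for which this local structure is sufficiently spread out, is dense in $(R(f),\infty)$, and the construction then completes the proof for such $R$.

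I expect the main obstacle to be this last step: controlling the merging of components over the disc $D$, that is, proving that the uncountably many separating continua in $A_R(f)^c$ genuinely separate uncountably many components of $A_R(f)$, and not merely uncountably many components of the larger set $A_R(f)\setminus D$. This is subtle because the part of $\partial A_R(f)$ near $\zeta$ can be complicated, and it is especially so when $f$ has multiply connected wandering domains, since -- as is established elsewhere in this paper -- complementary components of such a domain, and hence portions of $\partial A_R(f)$, may have no interior and may occur in uncountable profusion. Isolating the dense set of values of $R$ on which the local picture of $A_R(f)$ is tame enough for the separation argument to succeed is, I expect, the technical heart of the proof.
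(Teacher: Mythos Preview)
Your reduction to the case where no $A_R(f)$ is a spider's web is correct, and your forward-invariance observation $f^n(A_R(f))\subset A_R(f)$ is both correct and relevant. However, the gap you yourself identify is fatal to the approach, and your proposed fix does not work.

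Running the escape-channel argument of Theorem~\ref{A(f)} with $A_R(f)^c$ in place of $A(f)^c$ would, at best, yield uncountably many components of $A_R(f)\cap\{|z|\ge 2r_0\}$, separated by unbounded continua in $A_R(f)^c\cap\{|z|\ge 2r_0\}$. But the value of $R$ produced by the Eremenko-point construction satisfies $r_0<R<2r_0$, so $A_R(f)$ may protrude into the annulus $\{R\le|z|<2r_0\}$ and reconnect these pieces. Your suggestions for handling this --- placing $D$ at a well-chosen $\zeta\in\partial A_R(f)$, invoking left-continuity $A_R(f)=\bigcap_{\rho<R}A_\rho(f)$ --- do not give any control over this annular region; indeed the paper's Theorems~\ref{I(f)} and~\ref{A(f)} stop precisely at ``$I(f)\setminus D$'' and ``$A(f)\setminus D$'' because this obstruction is genuine, and Remark~2 after Theorem~\ref{I(f)} makes explicit that it is not known how to remove~$D$.

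The paper's proof takes a completely different route and never attempts to separate components by continua in the complement. Instead it uses the uncountably many Eremenko points $z(j_n)$, indexed by Wiman--Valiron itineraries $(j_n)\in\{-1,1\}^{\N}$, that Theorem~\ref{Epoints} places in $A_R(f)$ for one specific $R\in(r_0,2r_0)$. The heart of the argument shows \emph{directly} that two such points with distinct itineraries lie in distinct components of $A_R(f)$: assuming they lie in a common component $\Delta$, one uses a Riemann map and prime ends to build simple paths $\gamma,\gamma'$ inside the relevant quadrilaterals $Q_{0,\pm1}$ that accumulate on $\Delta$, joins them along the inner edge of $Q_0$, and obtains a bounded domain $\Omega$ with $\partial\Omega\subset\Gamma\cup\Delta$. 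A short analysis of $f(\Omega)$ then shows that the outer boundary of $f(\Omega)$ lies in $f(\Delta)\subset A_{M(R)}(f)$, so $A_{M(R)}(f)$ has a bounded complementary component and is therefore a spider's web --- contradicting the standing hypothesis. The density of admissible $R$ in $(R(f),\infty)$ is then obtained not from any local analysis of $\partial A_R(f)$, but simply by pulling back under $M^{-N}$: the growth estimate $M(r^c)\ge M(r)^c$ stretches any interval $(R',R'')$ to one containing some $(r_0,2r_0)$ after finitely many applications of $M$, and distinct components of $A_{R_0}(f)$ pull back to distinct components of $A_{M^{-N}(R_0)}(f)$.
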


We have the following immediate corollary of Theorem~\ref{AR(f)}.
\begin{corollary}\label{uncountable}
Let $f$ be a {\tef}. If $I(f)$ is not a spider's web, then it contains uncountably many disjoint {\uca} all lying in $A_R(f)$.
\end{corollary}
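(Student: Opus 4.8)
The plan is to read off Corollary~\ref{uncountable} from Theorem~\ref{AR(f)}, using only facts already recorded in the introduction: that every component of $A_R(f)$ is unbounded \cite[Theorem~1.1]{RS09}, and that if $A_R(f)$ is a spider's web for some $R>R(f)$ then $I(f)$ is a spider's web \cite{RS09}. Suppose then that $I(f)$ is not a spider's web. If $A_R(f)$ were a spider's web for \emph{all} $R>R(f)$, then it would be one for at least one such $R$, whence $I(f)$ would be a spider's web, a contradiction. So the first alternative in Theorem~\ref{AR(f)} fails, and therefore the second must hold: there is a dense set of values $R\in(R(f),\infty)$ for which $A_R(f)$ has uncountably many components. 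Fix one such $R$.

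It remains to identify these components as the desired family. Distinct components are automatically pairwise disjoint. Since $A_R(f)$ is the intersection of the closed sets $\{z:|f^n(z)|\ge M^n(R)\}$, it is closed in $\C$, so each of its components is closed in $\C$; each is connected by definition, and each is unbounded by \cite[Theorem~1.1]{RS09}. Thus the uncountably many components of $A_R(f)$ are pairwise disjoint unbounded continua in the sense of this paper. Finally, since $R>R(f)$ the strictly increasing sequence $M^n(R)$ tends to $\infty$, so $A_R(f)\subset I(f)$; hence $I(f)$ contains uncountably many disjoint unbounded continua, all lying in $A_R(f)$, as claimed.

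I do not anticipate any genuine obstacle here: Theorem~\ref{AR(f)} carries all the weight, and the only steps requiring (routine) care are the use of the two cited implications to eliminate the spider's web alternative, and the elementary observation that a component of the closed set $A_R(f)$ is itself closed and so qualifies as an unbounded continuum in the sense used above.
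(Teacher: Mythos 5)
Your proof is correct and is exactly the derivation the paper intends: the paper calls Corollary~\ref{uncountable} an immediate consequence of Theorem~\ref{AR(f)}, combined with the facts (already cited in the introduction) that $A_R(f)$ is closed with all components unbounded, that $A_R(f)\subset I(f)$, and that $A_R(f)$ being a spider's web forces $I(f)$ to be one. Nothing is missing and no different route is taken.
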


To prove these theorems we start by refining Eremenko's original construction of points in $I(f)$; see \cite{aE89}. We then use this refined construction in different ways to prove Theorem~\ref{I(f)} (and Theorem~\ref{A(f)}), and then Theorem~\ref{AR(f)}. For example, to prove Theorem~\ref{AR(f)} we construct uncountably many points in $I(f)$, each with a distinct itinerary of a certain type. These {\it Eremenko points}, as we call them, all lie in components of $A_R(f)$ for the same value of~$R$, and we use conformal mapping and the theory of prime ends to show that if two of these components corresponding to distinct Eremenko points coincide, then $A_R(f)$ is a spider's web. The fact that $A_R(f)$ is closed is essential to our arguments.

Next, we discuss components of the sets formed by intersecting $I(f)$, $A(f)$ and $A_R(f)$ with the Julia set of~$f$. We recall that if all the Fatou components of~$f$ are simply connected, that is,~$f$ has no {\mcfc}s, then all the components of $J(f)$, and also of $A_R(f)\cap J(f)$ are unbounded; see \cite[Theorem 1.3]{RS09}. Moreover, in this situation all the Eremenko points of~$f$ lie in $J(f)$; see \cite{aE89} and Theorem~\ref{Epoints}~(c) below. Therefore, for the intersections with $J(f)$, our proofs give analogous results to Theorems~\ref{I(f)}, \ref{A(f)} and \ref{AR(f)}, and also Corollary~\ref{uncountable}, such as the following, whose proof we omit.
\begin{theorem}\label{No-mcwd}
Let $f$ be a \tef\ with no {\mcfc}s. Then either $A_R(f)\cap J(f)$ is a {\sw} for all $R>R(f)$ or there exists a dense set of values of $R\in (R(f),\infty)$ such that $A_R(f)\cap J(f)$ has uncountably many components.
\end{theorem}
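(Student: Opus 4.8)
The plan is to follow the proof of Theorem~\ref{AR(f)} almost verbatim, using three simplifications afforded by the hypothesis that $f$ has no {\mcfc}s. In that proof one constructs, for each~$R$ in a certain dense subset $\mathcal R$ of $(R(f),\infty)$, an uncountable family $\{z_\alpha\}$ of Eremenko points with pairwise distinct itineraries, all lying in $A_R(f)$. First, by Theorem~\ref{Epoints}~(c) every Eremenko point of~$f$ lies in $J(f)$, so each $z_\alpha$ in fact lies in $A_R(f)\cap J(f)$. Secondly, by \cite[Theorem~1.3]{RS09} all components of $A_R(f)\cap J(f)$ are unbounded, so the alternative ``uncountably many components'' automatically produces uncountably many \emph{unbounded} components. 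Thirdly, $A_R(f)\cap J(f)$ is closed, so the topological and prime-end arguments that the proof of Theorem~\ref{AR(f)} applies to components of the closed set $A_R(f)$ apply equally to components of $A_R(f)\cap J(f)$.

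Granting this, I would consider any $R\in\mathcal R$ and run the dichotomy of Theorem~\ref{AR(f)} with $A_R(f)\cap J(f)$ in place of $A_R(f)$. If the components of $A_R(f)\cap J(f)$ through the points $z_\alpha$ are pairwise distinct, then $A_R(f)\cap J(f)$ has uncountably many components, all unbounded by the above. Otherwise two of these points, say $z_\alpha$ and $z_\beta$ with distinct itineraries, lie in a common component $K$ of $A_R(f)\cap J(f)$, and then the conformal-mapping and prime-end argument of Theorem~\ref{AR(f)}, applied now to~$K$ and its complementary domain, forces $A_R(f)\cap J(f)$ to be a {\sw}. Thus for each $R\in\mathcal R$ exactly one of these two outcomes holds. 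If $A_R(f)\cap J(f)$ is a {\sw} for even one $R\in\mathcal R$, then it is a {\sw} for every $R>R(f)$: this independence of~$R$ is proved much as in \cite[Lemma~7.1(d)]{RS09}, using the relation $f(A_R(f))\subseteq A_{M(R)}(f)$ together with the complete invariance of $J(f)$. If instead $A_R(f)\cap J(f)$ is a {\sw} for no $R\in\mathcal R$, then it has uncountably many components for every~$R$ in the dense set $\mathcal R$. This is exactly the stated alternative.

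The step I expect to be the main obstacle is confirming that the prime-end argument really does transcribe: one must ensure that the nested bounded simply connected domains $G_n$ whose boundaries witness the spider's-web structure can be chosen with $\partial G_n\subseteq J(f)$, rather than merely $\partial G_n\subseteq A_R(f)$. This is precisely where the absence of {\mcfc}s is essential --- together with \cite[Theorem~1.3]{RS09} and the blowing-up property of $J(f)$, it guarantees that the separating loops produced by the argument lie in $J(f)$. Once this is verified, the remainder is a routine transcription of the proof of Theorem~\ref{AR(f)}, which is why the details are omitted in the text.
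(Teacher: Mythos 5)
Your proposal is essentially the paper's (omitted) argument: rerun the proof of Theorem~\ref{AR(f)}, using Theorem~\ref{Epoints}\,(c) to place the Eremenko points in $J(f)$ and \cite[Theorem~1.3]{RS09} for the unboundedness (and hence distinctness) of the components of $A_R(f)\cap J(f)$ containing them. The one step that is not pure transcription is the spider's-web alternative: the prime-end argument yields a bounded complementary component of $A_{M(R)}(f)$, hence that $A_R(f)$ is a \sw\ for all $R>R(f)$ by \cite[Theorem~1.4, Lemma~7.1(d)]{RS09}, and the passage from this to $A_R(f)\cap J(f)$ being a \sw\ uses the no-{\mcfc} hypothesis as in \cite{jO12}, which is the precise form of the fact you gesture at in your final paragraph rather than a direct output of the prime-end argument.
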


In the case that~$f$ {\it does} have a {\mcfc}, the sets $I(f)$, $A(f)$ and $A_R(f)$ are all connected, and are in fact spiders' webs \cite[Theorem~1.5]{RS09}. However, in this case the components of $I(f)\cap J(f)$, $A(f)\cap J(f)$ and $A_R(f)\cap J(f)$ are all bounded. The following theorem indicates that in this case the structure of the families of components of the sets $A(f)\cap J(f)$ and $A_R(f)\cap J(f)$ can display strikingly varied behaviour. The term `inner connectivity' used here is explained and discussed in detail in Section~\ref{mcwds}.
\begin{theorem}\label{varied}
(a)\;Let~$f$ be a \tef\ with a {\mcfc}~$U$ and suppose that $R>R(f)$. If $U$ has infinite inner connectivity, then $A_R(f)\cap J(f)$ and $A(f) \cap J(f)$ have uncountably many components.

(b) \;There exists a \tef\ $f$ with a {\mcfc} and $R>R(f)$ such that $A_R(f)\cap J(f)$ and $A(f)\cap J(f)$ have only countably many components.
\end{theorem}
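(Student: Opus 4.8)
The plan is to handle the two parts by completely different methods. Part~(a) will be a short consequence of the analysis of multiply connected wandering domains carried out in Section~\ref{mcwds}, combined with an elementary topological observation; part~(b) rests on an explicit construction, and that construction is where essentially all of the work lies.

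For part~(a), write $U_\ell$ for the Fatou component containing $f^\ell(U)$. Since $U$ is a multiply connected wandering domain it is known (see \cite{BH99}, \cite{RS09}) that $U_\ell\subset A_R(f)$, and hence $\ol{U_\ell}\subset A_R(f)$ since $A_R(f)$ is closed, for all sufficiently large~$\ell$; in particular $\partial U_\ell\subset A_R(f)\cap J(f)$. Because $U$ has infinite inner connectivity, the results of Section~\ref{mcwds} show that $U_\ell$ has uncountably many complementary components for all sufficiently large~$\ell$, and since at most countably many pairwise disjoint nonempty open sets fit in the plane, uncountably many of these complementary components, say the sets $C_\alpha$, have empty interior. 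Fix such an~$\ell$. For each~$\alpha$ we have $C_\alpha=\partial C_\alpha$, and since the boundary of a complementary component of a domain is contained in the boundary of that domain, $C_\alpha\subset\partial U_\ell\subset A_R(f)\cap J(f)$. Now the component of $A_R(f)\cap J(f)$ that contains~$C_\alpha$ is a connected subset of $J(f)\subset\C\setminus U_\ell$ containing~$C_\alpha$, and since $C_\alpha$ is a whole complementary component of $U_\ell$ that component must equal~$C_\alpha$. Hence each of the uncountably many sets $C_\alpha$ is a component of $A_R(f)\cap J(f)$, and, since $A_R(f)\subset A(f)$, the identical argument shows each $C_\alpha$ is also a component of $A(f)\cap J(f)$. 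This proves~(a).

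For part~(b) the basic tool is the elementary fact that a set which is a countable union of connected sets has at most countably many components (each connected set lies in a single component, and every component contains at least one of them). I would construct a {\tef}~$f$ with an eventually doubly connected wandering domain~$U$ whose dynamics are tightly controlled: the grand orbit of~$U$ consists of countably many finitely connected Fatou components $V_1,V_2,\dots$, each bounded by finitely many Jordan curves; there are no other Fatou components; and, for a suitable $R>R(f)$, every point of $A_R(f)\cap J(f)$ lies on one of the boundaries $\partial V_j$, each $\partial V_j$ being contained in $A_R(f)$. Granting this, $A_R(f)\cap J(f)=\bigcup_j\partial V_j$ is a countable union of connected sets and so has at most countably many components; and $A(f)\cap J(f)=\bigcup_{k\ge0}f^{-k}(A_R(f)\cap J(f))$ is then also a countable union of connected sets, since the preimage of each Jordan curve $\partial V_j$ under an iterate of~$f$ is a countable union of Jordan arcs.

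The main obstacle, by a wide margin, is the construction in~(b). One must arrange simultaneously that~$U$ and its grand orbit have bounded connectivity with Jordan-curve boundaries, that no further Fatou components appear, and---most delicately---that $A_R(f)$ contains no stray escaping Julia points off the Fatou boundaries: for instance the boundary of a Fatou component lying close to the origin could a priori meet the closed set $A_R(f)$ in a Cantor set, giving uncountably many components, and this possibility must be excluded. I would attempt the construction either by a careful choice of the parameters in Baker's infinite-product function $f(z)=cz^{2}\prod_k(1+z/r_k)$, forcing $f^{-1}(U_n)=U_{n-1}$ so that the grand orbit, and with it the fast escaping set, stays tractable, or by a quasiconformal surgery grafting simple doubly connected wandering behaviour onto a model function whose fast escaping set is already well understood, in each case verifying the required properties by the standard modulus and Riemann--Hurwitz estimates for multiply connected wandering domains. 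Everything else reduces to routine bookkeeping once the example is in hand.
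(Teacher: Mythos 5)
Your part (a) is correct and is essentially the paper's own argument. Both hinge on the same two ingredients: that $\ol{U_N}\subset A_R(f)$ for large $N$ (Lemma~\ref{basic-mcwds}(c)) and that a wandering domain of infinite inner connectivity has uncountably many complementary components -- note that this is Theorem~\ref{inner}(a), proved in Section~\ref{mcwds-bdrycmpnts}, not a result of Section~\ref{mcwds}, but it is exactly what the paper invokes too. Your extra step (discarding the at most countably many complementary components with nonempty interior, so that each remaining $C_\alpha$ satisfies $C_\alpha=\partial C_\alpha\subset\partial U_\ell$ and is literally a component of $A_R(f)\cap J(f)$) is a valid, slightly more explicit route to the conclusion; the paper gets distinctness more directly from the fact that boundaries of distinct complementary components lie in distinct components of $\C\setminus U_N$, and any connected subset of $J(f)$ lies in one such component.

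Part (b), however, is not proved: it is an existence statement and you never produce the function. You correctly isolate the delicate point -- excluding ``stray'' fast escaping Julia points off the countably many boundary curves -- but then defer both the construction and that exclusion to ``standard modulus and Riemann--Hurwitz estimates'', which is precisely where all the content lies. The paper does not build a bespoke example at all: it takes Bishop's function \cite{cB} (whose Julia set has dimension~1), for which the orbit $U_n$ of the wandering domain has Jordan curve boundary components, the inner boundary component of $U_{n+1}$ coincides with the outer boundary component of $U_n$, the outer connectivity is countably infinite, and one may take $R>R(f)$ with $\{z:|z|=M^n(R)\}\subset U_n$. The key step is then an argument, not a design feature: if $\zeta\in A(f)\cap J(f)$ lay on none of the countably many boundary curves of the $U_n$ nor on a preimage of one, its orbit would eventually run through interiors of type~1 complementary components, and Lemma~\ref{types} forces the index of the surrounding $U_{k(n)}$ to lag behind $n$ by at least one infinitely often, giving $|f^{n_j}(\zeta)|\le M^{\,d+n_j-j+1}(R)$ and contradicting $\zeta\in A(f)$. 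By contrast, your proposed model (an eventually doubly connected wandering domain whose grand orbit consists of finitely connected components with Jordan boundaries, no other Fatou components, and $A_R(f)\cap J(f)$ confined to those boundaries with each $\partial V_j\subset A_R(f)$) is not known to exist, and parts of the wish list cannot hold as stated: $\partial V_j\subset A_R(f)$ must fail for preimage components near the origin, since $A_R(f)\subset\{z:|z|\ge R\}$. The countable-union bookkeeping at the start of your part (b) is fine; what is missing is the example itself together with the verification that fast escaping Julia points are confined to the countably many curves, and that is the heart of the matter.
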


{\it Remark}\; In the course of proving Theorem~\ref{varied} we also obtain new results of independent interest on the possible structures of {\mconn} wandering domains. In particular, we show that, perhaps surprisingly, there exist {\tef}s with {\mcfc}s that have uncountably many complementary components with no interior; see Theorem~\ref{inner}.

Finally, we show that the situation for components of $I(f)\cap J(f)$, in the case when~$f$ has a \mcfc, is more straightforward.
\begin{theorem}\label{mc-IJ}
Let $f$ be a \tef\ with a \mcfc. Then $I(f)\cap J(f)$ has uncountably many components.
\end{theorem}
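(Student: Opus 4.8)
The plan is to exhibit an injection from $\{0,1\}^{\N}$ into the set of components of $I(f)\cap J(f)$. For each itinerary $\iota$ I would construct an \emph{Eremenko point} $z_\iota\in J(f)$ whose orbit escapes to $\infty$ along a route coded by $\iota$, and arrange that points with different itineraries are separated in $\C$ by a Jordan curve lying in $F(f)$, so that they cannot lie in the same component of $I(f)\cap J(f)$. This is a branching refinement of the construction behind Theorem~\ref{AR(f)}, adapted to the geometry forced by a {\mcfc}.

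First I would recall that geometry. Let $U$ be a {\mcfc} of $f$ and let $U_n$ be the Fatou component containing $f^n(U)$; each $U_n$ is a {\mcfc}, is bounded, and for all large $n$ contains a round annulus $\{z:r_n<|z|<\rho_n\}$ around $0$ with $\rho_n\to\infty$ and $\rho_n/r_n\to\infty$, while $U_n$ surrounds $\overline{U_{n-1}}$. I would use three consequences. (i) There are loops $\sigma_n\subset U_n\subset F(f)$ surrounding $0$ with $\sigma_n\to\infty$ and $\sigma_{n+1}$ surrounding $\sigma_n$, so each component of $J(f)$, hence of $I(f)\cap J(f)$, lies in a single complementary component of $\bigcup_n\sigma_n$ and is bounded (as noted in the introduction). (ii) There is a fixed $R>R(f)$ with $\overline{U_n}\subset A_R(f)$ for all large $n$; since $A_R(f)$ is closed, this gives $\partial U_n\subset A_R(f)\cap J(f)\subset I(f)\cap J(f)$. (iii) The set $F(f)$ is completely invariant, so all preimages of the loops $\sigma_n$ lie in $F(f)$; moreover each ``gap'' $Q_n$ bounded by $\sigma_n$ and $\sigma_{n+1}$ contains Fatou components (since $J(f)$ has empty interior) with the ``web'' of $J(f)$ threaded among them.

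Now the construction. Since there are only countably many gaps $Q_n$ and every component of $I(f)\cap J(f)$ lies in one of them, it suffices to produce uncountably many components inside a single gap $Q_{n_0}$ with $n_0$ large; this gap also contains the continua $\partial U_{n_0}$ and $\partial U_{n_0+1}$ in $A_R(f)\cap J(f)$. The core of the proof is a branching version of Eremenko's pull-back scheme carried out inside $J(f)\cap Q_{n_0}$: recursively over $k$ one builds a binary tree of nonempty compact connected sets $V_\iota\subset J(f)\cap Q_{n_0}$, indexed by finite $0$--$1$ strings, with $V_{\iota 0}$ and $V_{\iota 1}$ disjoint subsets of $V_\iota$, such that $|f^{k}|\ge M^{k}(R)$ on $V_\iota$ when $|\iota|=k$ and the tree is nested, so that $\bigcap_k V_{\iota|k}$ is a nonempty compact subset of $A_R(f)$; picking a point $z_\iota$ of it for each $\iota\in\{0,1\}^{\N}$ gives an Eremenko point in $A_R(f)\cap J(f)\subset I(f)\cap J(f)$. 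At each split one places $V_{\iota 0}$ and $V_{\iota 1}$ on opposite sides of a suitable Jordan curve $\gamma_\iota$ lying in a Fatou component in $Q_{n_0}$ (for instance a component of a preimage of one of the wandering domains $U_m$), which is possible by (iii); then $\gamma_\iota\cap(I(f)\cap J(f))=\emptyset$. If $\iota\ne\iota'$ and $m$ is the first index where they differ, then $z_\iota$ and $z_{\iota'}$ lie in the two distinct complementary components of $\gamma_{\iota|(m-1)}$, so no connected subset of $I(f)\cap J(f)$ contains both; hence $z_\iota$ and $z_{\iota'}$ lie in distinct components of $I(f)\cap J(f)$. Therefore $\iota\mapsto(\text{component of }z_\iota)$ is injective and $I(f)\cap J(f)$ has uncountably many components.

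The step I expect to be the main obstacle is executing the branching pull-back entirely inside $J(f)$. Eremenko's original device for forcing a pull-back into $J(f)$ used that every Fatou component is simply connected; here, with a {\mcfc} present, one must instead follow the infinitely many preimage components through a multiply connected regime while guaranteeing at each level that the sets $V_\iota$ remain in $J(f)\cap Q_{n_0}$, that there is genuine room for \emph{two} disjoint escaping continua rather than one, and that a separating Jordan curve in $F(f)$ can be inserted between them. The conformal-mapping and prime-end techniques developed for Theorem~\ref{AR(f)}, combined with the structure theory of {\mcfc}s, should be what makes this precise.
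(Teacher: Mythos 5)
There is a genuine gap, and in fact the strategy is ruled out in general by the paper's own results. Your plan produces points $z_\iota\in A_R(f)\cap J(f)$ and separates any two of them by a Jordan curve lying in $F(f)$; such a curve separates them not only into distinct components of $I(f)\cap J(f)$ but also into distinct components of $A_R(f)\cap J(f)$. So, if your branching pull-back could be carried out, it would show that $A_R(f)\cap J(f)$ has uncountably many components for \emph{every} {\tef} with a {\mcfc}. But Theorem~\ref{varied}\,(b) (Bishop's example, eventual inner connectivity~2) exhibits such an~$f$ for which $A_R(f)\cap J(f)$ and $A(f)\cap J(f)$ have only \emph{countably} many components: there every point of $A(f)\cap J(f)$ lies on one of the countably many boundary components of the wandering domains $U_n$ or on a preimage of one. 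Consequently the step you yourself flag as the main obstacle --- running the binary tree of fast-escaping continua inside $J(f)$ --- cannot be carried out in general: when a {\mcfc} is present, the Wiman--Valiron quadrilaterals (and their pull-backs) are eventually swallowed by the wandering domains, which is exactly why Theorem~\ref{Epoints}\,(c) needs the hypothesis that there are no {\mcfc}s, and why in the Bishop-type case there is no room for two disjoint fast-escaping continua of $J(f)$ separated by a Fatou curve at every branching level. Your argument could at best succeed in the infinite inner connectivity case, which is precisely Theorem~\ref{varied}\,(a), not Theorem~\ref{mc-IJ}.

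The paper's proof avoids this by abandoning fast escape and using \emph{slowly} escaping points with annular itineraries. By Lemma~\ref{nested} there are round annuli $B'_n\subset U_n$ in the Fatou set with $f(B'_n)\subset B'_{n+1}$; let $E_n$ be the closed annulus between $B'_n$ and $B'_{n+1}$, so $f(E_n)\supset E_{n+1}$ and each $E_n$ meets $J(f)$. A Montel argument shows that $f(E_{n_j})\supset E_{n_j}$ for infinitely many $n_j$, so one can prescribe uncountably many itineraries through the annuli $E_n$ that differ by how long the orbit dwells in the annuli $E_{n_j}$ before moving outward; Lemma~\ref{toplemma} realises each itinerary by a point of $J(f)$, and these points escape because every index eventually occurs. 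The separation then comes for free: the annuli $B'_n$ lie in $F(f)$, so a connected subset of $J(f)$ cannot cross them, and hence the annular itinerary is constant on each component of $I(f)\cap J(f)$; distinct itineraries therefore give distinct components. If you want to salvage your write-up, this replacement of ``fast escape coded by Wiman--Valiron quadrilaterals'' by ``arbitrary escape rate coded by dwell times between Fatou annuli'' is the essential missing idea.
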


The plan of the paper is as follows. Section~\ref{Epointconstruction} contains our construction of uncountably many Eremenko points, and the proofs of Theorems~\ref{I(f)} and \ref{A(f)} are then given in Section~\ref{I(f)proof}. Further properties of the Eremenko points construction are given in Section~\ref{furtherprops}, followed by the proof of Theorem~\ref{AR(f)} in Section~\ref{AR(f)proof}. Section~\ref{mcwds} contains background material on {\mcfc}s, Section~\ref{mcwds-bdrycmpnts} gives our new results on the structure of such wandering domains, and Section~\ref{varied-mc-IJ} contains the proofs of Theorems~\ref{varied} and~\ref{mc-IJ}. Note that Sections~\ref{mcwds},~~\ref{mcwds-bdrycmpnts} and~\ref{varied-mc-IJ} can be read independently of the earlier sections. Finally, in Section~\ref{outstanding} we state some open problems related to our results.

\section{Constructing uncountably many Eremenko points}
\setcounter{equation}{0}\label{Epointconstruction}
It was shown in~\cite{BH99} that Eremenko's construction in~\cite{aE89} of points in $I(f)$ actually gives points that are in $A(f)$. Points constructed in this way have particularly nice properties and as noted earlier we often refer to them as {\it Eremenko points}; see \cite{RS09b} and \cite{BRS11}. Eremenko's construction was based on Wiman--Valiron theory, and here we use a modification of this construction to give uncountably many such points.  In Theorem~\ref{WV} we give a key result of Wiman--Valiron theory (see \cite{wH74}, for example) which describes the behaviour of an entire function~$f$ near points at which~$f$ takes its maximum modulus.

Let $f(z)=\sum_{n=0}^{\infty} a_n z^n$ be a {\tef} and, for $r>0$, let $z(r)$ denote a point such that
\[
|z(r)|=r\;\;\text{and}\;\;|f(z(r))| = M(r).
\]
Also, let $N(r)$ be the largest value of $n$ for which $|a_n|r^n$ is maximal. Note that $N(r)$ is increasing with $r$ and $N(r)\to\infty$ as $r\to\infty$.

\begin{theorem}\label{WV}
Let $f$ be a {\tef} and let $\alpha > 1/2$.  There exists a set $E \subset (0,\infty)$ such that $\int_E(1/t)\,dt<\infty$ and, for $r \in (0,\infty)\setminus E$, if $|z-z(r)| < r/(N(r))^{\alpha}$, then
\begin{equation}\label{WVe}
f(z)=\left(\frac{z}{z(r)}\right)^{N(r)} f(z(r))(1+\eps(r,z,\alpha)),
\end{equation}
where $\eps(r,z,\alpha)\to 0$ uniformly with respect to $z$ as $r\to \infty, r\notin
E$.
\end{theorem}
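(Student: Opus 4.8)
The plan is to follow the classical Wiman--Valiron method, as presented for instance in Hayman's survey~\cite{wH74}; the essential input is a measure-theoretic estimate describing, outside a set of finite logarithmic measure, how sharply the power series of~$f$ concentrates around its term of maximal modulus. I would first set up the standard apparatus. Write $\mu(r)=\max_{n\ge 0}|a_n|r^n$ for the maximum term, so that $N(r)$ is the largest~$n$ at which this maximum is attained (the central index). A direct computation shows that $x\mapsto\log\mu(e^x)$ is convex and non-decreasing with right-hand derivative $N(e^x)$; hence $N(r)$ is a non-decreasing step function, $N(r)\to\infty$, and $\log\mu(r)=\log\mu(r_0)+\int_{r_0}^{r}N(t)\,dt/t$. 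One also has the trivial bound $\mu(r)\le M(r)$.

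The heart of the argument is then a Borel-type lemma: if $\phi$ is convex, non-decreasing and tends to infinity, then $\phi'(x)\le\phi(x)\big(\log\phi(x)\big)^{1+\delta}$ outside a set of finite measure, because that set has measure at most $\int d\phi\big/\big(\phi(\log\phi)^{1+\delta}\big)<\infty$. Applying this --- in a form iterated enough to control also the relevant higher-order behaviour --- to $\phi(x)=\log\mu(e^x)$ and translating back to the variable~$r$, one obtains a set $E\subset(0,\infty)$ with $\int_E(1/t)\,dt<\infty$ such that, for $r\notin E$, the central index is not too large and, crucially, the coefficients fall away from the central term at least as fast as a Gaussian in $j/N(r)^{1/2}$: there is an absolute constant for which
\[
|a_{N(r)+j}|\,r^{N(r)+j}\le\mu(r)\,\exp\!\left(-c\,\frac{j^{2}}{N(r)}\right)\qquad(j\in\Z,\;r\notin E),
\]
uniformly as $r\to\infty$, $r\notin E$. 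In particular, summing over~$j$ shows that $M(r)$ exceeds $\mu(r)$ by at most a factor comparable to the effective width of this concentration, which is at most $N(r)^{1/2}$; it is this exponent $1/2$, and the way it enters the estimates below, that pins down the threshold $\alpha>1/2$. I expect establishing this concentration estimate, uniformly off~$E$, to be the main obstacle, since the rest of the argument is either elementary or routine bookkeeping.

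With these estimates in hand the expansion is obtained as follows. Fix $r\notin E$, put $N=N(r)$, $z_0=z(r)$, and take $z$ with $|z-z_0|<r/N^{\alpha}$, so that $z=z_0\zeta$ with $|\zeta-1|<N^{-\alpha}$. Then
\[
f(z)-\left(\frac{z}{z_0}\right)^{\!N}\!f(z_0)=\zeta^{\,N}\sum_{n\ge 0}a_n z_0^{\,n}\big(\zeta^{\,n-N}-1\big),
\]
and I would split this sum according to whether $|n-N|\le N^{\beta}$ or $|n-N|>N^{\beta}$, for a fixed $\beta\in(1/2,\alpha)$. On the far part the Gaussian bound overwhelms the at-most-exponential factor $|\zeta|^{\,n}\le e^{\,nN^{-\alpha}}$, so this contribution is negligible beside $\mu(r)\le|f(z_0)|$. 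On the near part I would write $\zeta^{\,n-N}-1=\sum_{k\ge 1}\binom{n-N}{k}(\zeta-1)^{k}$, interchange the sums, and so reduce to the quantities $\sum_{n}\binom{n-N}{k}a_n z_0^{\,n}$, which can be expressed through $z_0^{\,j}f^{(j)}(z_0)$ for $j\le k$; the Wiman--Valiron estimates of the previous step (equivalently $z_0^{\,j}f^{(j)}(z_0)/f(z_0)=N^{\,j}(1+o(1))$, together with the concentration bound) show that these moment sums are controlled by $\mu(r)$ times a power of the concentration width, so that the factors $(\zeta-1)^{k}$ with $|\zeta-1|<N^{-\alpha}$ render the entire near contribution $o(|f(z_0)|)$ precisely when $\alpha>1/2$. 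Combining the two parts gives $f(z)=\left(z/z_0\right)^{N}f(z_0)\big(1+\eps(r,z,\alpha)\big)$ with $\eps(r,z,\alpha)\to 0$ uniformly in such~$z$ as $r\to\infty$, $r\notin E$, which is the assertion.
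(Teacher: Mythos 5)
You should first note that the paper does not prove Theorem~\ref{WV} at all: it is quoted as a known result of Wiman--Valiron theory with a reference to Hayman's survey \cite{wH74}, so what you are offering is a reconstruction of that classical proof. Your overall architecture (convexity of $\log\mu(e^x)$ with derivative $N(e^x)$, a Borel-type lemma producing an exceptional set of finite logarithmic measure, concentration of the terms about the central index, and a near/far splitting of the series) is indeed the standard one. The genuine gap is in your pivotal concentration lemma, which is false as stated: there is no absolute (nor even $f$-dependent) constant $c$ such that, outside a set of finite logarithmic measure, $|a_{N(r)+j}|\,r^{N(r)+j}\le\mu(r)\exp\bigl(-c\,j^{2}/N(r)\bigr)$ for all $j\in\Z$. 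To see this, take log-concave coefficients whose ratios $\rho_n=|a_{n-1}/a_n|$ increase by only $c/(4n)$ per index along blocks of roughly $\sqrt{n}$ consecutive indices, with a jump of size $1$ between blocks; for every $r$ whose central index $N$ lies in a block one has $\log\bigl(\mu(r)/|a_{N+j}|r^{N+j}\bigr)\le c\,j^{2}/(4N)<c\,j^{2}/N$ for offsets $j$ inside the block, so the bound fails, and such $r$ contribute logarithmic measure about $c/4$ per dyadic range of the central index, hence infinite measure in total. This is precisely why the classical statement carries the parameter $\alpha>1/2$ and why $E$ depends on $\alpha$: off a set of finite logarithmic measure one can only guarantee concentration at scale $N^{1/2+o(1)}$ (for instance decay like $\exp\bigl(-j^{2}/(2N^{2\alpha'})\bigr)$ with $1/2<\alpha'<\alpha$), not at scale $\sqrt{N}$ with a fixed constant.

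Even granting a corrected concentration estimate, your near-range argument does not close. The error must be shown to be $o\bigl(|f(z(r))|\bigr)=o(M(r))$, but your termwise bounds compare against $\mu(r)$, and in general one only knows $M(r)\ge\mu(r)$; estimating $\sum_{|j|\le N^{\beta}}|a_{N+j}|r^{N+j}\,|j|\,N^{-\alpha}$ termwise gives a quantity of order $\mu(r)\,W^{2}N^{-\alpha}$, where $W$ is the concentration width, and with $W=N^{1/2+o(1)}$ this need not be $o(M(r))$ once $\alpha\le 1$ --- exactly the range the theorem is about. Relatedly, the asymptotics $z(r)^{j}f^{(j)}(z(r))/f(z(r))=N^{j}(1+o(1))$ that you invoke do not ``follow from the concentration bound'' by interchanging sums: already for $j=1$ the same estimate only yields a relative error of order $\mu(r)W^{2}/(N\,M(r))$, which is not $o(1)$ without a lower bound on $M(r)/\mu(r)$ that you do not have. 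These derivative asymptotics are themselves main theorems of Wiman--Valiron theory, of the same depth as \eqref{WVe}, so as written this step is circular; the real content of the proof in \cite{wH74} lies in an additional ingredient exploiting the maximality of $|f|$ at $z(r)$ and the regularity of $N$ off the exceptional set, beyond the termwise bookkeeping you describe. (A small slip besides: in the far range the relevant factor is $|\zeta^{\,n-N}|\le e^{|n-N|N^{-\alpha}}$, not $e^{nN^{-\alpha}}$, since $\zeta^{N}$ has been factored out.)
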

We need the following consequence of Theorem~\ref{WV}, which is related to \cite[Theorem~2.4]{RS09} but gives more precise information.
\begin{theorem}\label{quads}
Let~$f$ be a {\tef}, $K\ge 20\pi$, and put
\begin{equation}\label{DrK}
D_{r,K} = \{z: |z-z(r)| < Kr/N(r)\},\quad r>0.
\end{equation}
Then there exists a set~$E_K \subset (0,\infty)$ with $\int_{E_K}(1/t)\,dt<\infty$ such that, if $r \in [1,\infty) \setminus E_K$, then
\begin{itemize}
\item the disc $D_{r,K}$ contains a closed quadrilateral~$Q_{r,K}$ that can be partitioned into quadrilaterals $Q_{r,K,j}$, where $j\in\Z$, $|j|\le K/(10\pi)$, labelled in anticlockwise order with respect to the origin, such that $z(r)\in Q_{r,K,0}$ and the interior of each $Q_{r,K,j}$ is a univalent preimage under~$f$ of the cut annulus
\[
\{w:\tfrac12 M(r)<|w|<2M(r),|\arg(w/f(z(r)))|<\pi\};
\]
\item if~$B$ is a compact subset of any disc~$D$ in the cut annulus above and $B_{-1}$ is a component of $f^{-1}(B) \cap Q_{r,K}$, then
\begin{equation}\label{size}
\diam\, B \geq c(f,K) N(r) \frac{M(r)}{r}\, \diam\, B_{-1},
\end{equation}
where $c=c(f,K)>0$ is a constant that depends only on~$f$ and~$K$.
\end{itemize}
\end{theorem}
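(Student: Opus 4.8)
The plan is to reduce everything to the monomial $g(z)=(z/z(r))^{N(r)}f(z(r))$ via Theorem~\ref{WV} and then transfer its (entirely explicit) structure to $f$ by a perturbation argument. Write $N=N(r)$, $a=z(r)$, $M=M(r)$, and put $z=ae^{\zeta}$, so $g(z)=e^{N\zeta}f(a)$. Set $\mathcal A=\{w:\tfrac12M<|w|<2M,\ |\arg(w/f(a))|<\pi\}$ and $\mathcal A_0=f(a)^{-1}\mathcal A=\{w:\tfrac12<|w|<2,\ |\arg w|<\pi\}$ (using $|f(a)|=M$). The preimages of $\mathcal A_0$ under $z\mapsto g(z)/f(a)=e^{N\zeta}$ are exactly the $z$-images of the bands $P_j=\{\zeta:|\operatorname{Re}\zeta|<(\log2)/N,\ (2j-1)\pi<N\operatorname{Im}\zeta<(2j+1)\pi\}$, $j\in\Z$, each mapped biholomorphically onto $\mathcal A_0$, labelled anticlockwise about $0$ as $j$ increases (since $\operatorname{Im}\zeta=\arg(z/a)$ increases), with $z(r)$ lying in the $z$-image of $P_0$ because $g(a)/f(a)=1\in\mathcal A_0$.

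Next I would fix $\alpha\in(1/2,1)$ and let $E$ be the exceptional set of Theorem~\ref{WV} for this $\alpha$. The crucial preliminary observation is that, although $D_{r,K}$ has radius $Kr/N$, which exceeds $r/N^{\alpha}$ in general, the $z$-image of $\bigcup_{|j|\le K/(10\pi)}\ol{P_j}$ has diameter $O_K(r/N)$, so since $K$ is fixed while $N(r)\to\infty$ this image lies inside $\{|z-a|<r/N^{\alpha}\}\cap D_{r,K}$ whenever $r\ge r_0=r_0(f,K)$. For such $r$ with $r\notin E$, Theorem~\ref{WV} gives $f(z)=g(z)(1+\eps(z))$ throughout that region, with $\sup|\eps|$ as small as we please after further enlarging $r_0$. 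Setting $E_K=E\cup[1,r_0]$ gives $\int_{E_K}(1/t)\,dt<\infty$, and from now on $r\in[1,\infty)\setminus E_K$.

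The heart of the proof is to \emph{define} $Q_{r,K,j}$ to be the closure of the component of $f^{-1}(\mathcal A)$ meeting the $z$-image of $P_j$ and then show that $f$ maps its interior biholomorphically onto $\mathcal A$ --- so that ``univalent preimage of the cut annulus'' is something verified rather than imposed. Transferring to the $\zeta$-plane, consider $\Psi(\zeta):=f(ae^{\zeta})/f(a)=e^{N\zeta}(1+\eps)$ on a rectangle $S$ slightly larger than $\ol{P_j}$, chosen so that $e^{N\zeta}$ maps $\partial S$ onto the boundary of a cut annulus strictly containing $\mathcal A_0$, with winding number $1$ about each point of $\mathcal A_0$. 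Since $\Psi$ is a uniformly small multiplicative perturbation of $e^{N\zeta}$ for $r$ large, the curve $\Psi(\partial S)$ still winds once about each point of $\mathcal A_0$, so by the argument principle $\Psi$ assumes each value of $\mathcal A_0$ exactly once in $S$; hence the relevant component $V$ of $\operatorname{int}S\cap\Psi^{-1}(\mathcal A_0)$ is mapped biholomorphically onto $\mathcal A_0$, and $Q_{r,K,j}$ is the closure of its $z$-image. The same control of $\eps$ shows that $\partial Q_{r,K,j}$ consists of four arcs (the $f$-preimages of $|w|=\tfrac12M$, of $|w|=2M$, and of the two copies of the slit $\arg(w/f(a))=\pm\pi$), lying close to the $z$-images of the four sides of $P_j$; that consecutive quadrilaterals share the common preimage of the slit; that they inherit the anticlockwise labelling; that $z(r)\in Q_{r,K,0}$; and that their union $Q_{r,K}$ is a closed quadrilateral contained in $D_{r,K}$.

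It then remains to prove the size estimate \eqref{size}, which is routine. Cauchy's estimate for $\eps=f/g-1$ on $\{|z-a|<r/N^{\alpha}\}$ gives $|\eps'|=O(N^{\alpha}/r)$ on $Q_{r,K}$, and since $g'=Nz^{-1}g$ we have $f'=g'\bigl(1+\eps+(z/N)\eps'\bigr)$, where the correction term has modulus at most $1/5$ on $Q_{r,K}$ for $r$ large, while $|g'(z)|=N|z|^{-1}(|z|/r)^{N}M\asymp NM/r$ on $Q_{r,K}$ since $(|z|/r)^{N}$ is bounded above and below there. Hence $|f'|\ge c_0NM/r$ on $Q_{r,K}$ for an absolute $c_0>0$, so the inverse branch $(f|_{Q_{r,K,j}})^{-1}\colon\mathcal A\to\operatorname{int}Q_{r,K,j}$ has derivative at most $c_0^{-1}r/(NM)$. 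Given $B\subset D\subset\mathcal A$ with $D$ a disc, one may replace $B$ by the component of it whose preimage contains $B_{-1}$ (this only decreases $\diam B$, and this component lies inside a single $\operatorname{int}Q_{r,K,j}$); then any two of its points are joined by a straight segment lying in the convex set $D$, and integrating the derivative bound along it gives $\diam B_{-1}\le c_0^{-1}(r/(NM))\diam B$, which is \eqref{size} with $c(f,K)=c_0$. The convexity hypothesis on $D$ is precisely what makes such a segment available, since $\mathcal A$ itself is not convex. The one genuinely delicate step is the middle one --- guaranteeing that $f$ maps $Q_{r,K,j}$ \emph{onto} the whole cut annulus, univalently, rather than merely onto a Jordan domain close to it --- and this is exactly why $Q_{r,K,j}$ must be taken as a component of a preimage and the conclusion extracted from the argument principle; the tiling bookkeeping and the distortion estimate are comparatively mechanical once this is in hand.
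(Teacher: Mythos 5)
Your reduction to the monomial via Theorem~\ref{WV}, and the second half of your argument (the Cauchy estimate for $\eps'$, the lower bound $|f'|\ge c_0N(r)M(r)/r$ on $Q_{r,K}$, and integration of the inverse branch along a segment in the convex disc $D$) are sound and amount to a legitimate filling-in of the paper's one-line derivation of \eqref{size}. The gap is exactly at the step you flag as delicate. You claim that, since $\Psi(\zeta)=e^{N\zeta}(1+\eps)$ is a small multiplicative perturbation of $e^{N\zeta}$, the curve $\Psi(\partial S)$ winds once about \emph{each} point of $\mathcal{A}_0$, hence $\Psi$ takes every value of $\mathcal{A}_0$ exactly once in $S$. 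But for your $S$ (angular extent exactly $2\pi$ in $N\,\Im\zeta$, enlarged only radially) the unperturbed curve $e^{N\zeta}(\partial S)$ contains the images of the two horizontal edges, which lie on the ray $\arg w=\pm\pi$, i.e.\ on the slit of $\partial\mathcal{A}_0$ itself. Points $w_0\in\mathcal{A}_0$ with $\arg w_0$ within $O(\sup|\eps|)$ of $\pm\pi$ are therefore closer to $e^{N\zeta}(\partial S)$ than the size of the perturbation, and Rouch\'e/homotopy gives no control of the winding number of $\Psi(\partial S)$ about them. This is not a technicality: writing $1+\eps=e^{\eta}$, the candidate preimages of such a $w_0$ satisfy $N\zeta\approx\log|w_0|+i\arg w_0-\eta$ and $N\zeta\approx\log|w_0|+i(\arg w_0-2\pi)-\eta$, and depending on the sign of $\Im\eta$ on the two horizontal edges either zero or two of these lie in $S$. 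So "exactly once in $S$" can fail for values near the slit, and with it your proof that the component $V$ is mapped \emph{onto} the whole cut annulus. (If instead you enlarge $S$ angularly as well, the unperturbed winding number about points with $|\arg w_0|\in(\pi-\delta,\pi)$ is already $2$, so the stated premise fails before perturbing.)

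The paper sidesteps this by never comparing image curves in the $w$-plane near the slit: it applies Rouch\'e in the logarithmic coordinate to show that $g(z)=\log\bigl(f(z)/f(z(r))\bigr)$ is univalent on all of $D_{r,K}$ (which lies inside the Wiman--Valiron disc because $\alpha<1$ and $N(r)\to\infty$) with $g(D_{r,K})$ containing the disc of radius $K/2$, and then \emph{defines} $Q_{r,K,j}=g^{-1}(R_j)$ for the exact rectangles $R_j=\{|\Re w|<\log 2,\ |\Im w-2j\pi|<\pi\}$; since $\exp$ maps $R_j$ precisely onto the normalized cut annulus, univalence and surjectivity onto all of $\mathcal{A}$ -- including values arbitrarily close to the slit -- are automatic, and the edge bookkeeping, anticlockwise labelling and $z(r)\in Q_{r,K,0}$ follow at once. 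Your argument is repaired by the same move: replace the winding-number comparison by $h(\zeta)=N\zeta+\log(1+\eps)$, prove $h$ univalent with image containing the exact rectangles, and take the quadrilaterals to be $h$-preimages of those rectangles -- but that is precisely the published proof (your substitution $z=z(r)e^{\zeta}$ is inessential), so as written your route differs from the paper's only at the point where it does not go through.
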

\begin{proof}
Let $\alpha=3/4$ and let $E$ be the corresponding exceptional set defined in Theorem~\ref{WV}. Since $\alpha<1$, it follows from Theorem~\ref{WV} together with the fact that $N(r)\to\infty$ as $r\to\infty$ that we can take $r(f,K)>0$ so large that, for $r\ge r(f,K)$, $r \notin E$, and $z \in D_{r,K}$, we have the linear approximation
\begin{align}\label{logform}
 \log \left(\frac{f(z)}{f(z(r))}\right)& = N(r)\log\left(\frac{z}{z(r)}\right)+\log(1+\eps(r,z))\notag\\
 & = N(r)\left(\frac{z-z(r)}{z(r)}\right)+\eps_1(r,z),
\end{align}
where $|\eps_1(r,z)|\le 1/100$. We can now use Rouch\'e's theorem to deduce from \eqref{logform} that we can also take $r(f,K)>0$ so large that if $r\ge r(f,K)$, $r\notin E$, then the function
\[
g(z)= \log (f(z)/f(z(r)))
\]
maps $D_{r,K}$ univalently, with $g(z(r))=0$, and that $g(D_{r,K})$ contains the disc with centre~0 and radius $\tfrac12 K$, and hence contains the square
\[
\{w: |\Re w|\le \tfrac14 K,|\Im w|\le \tfrac14 K\}.
\]
Now, since $K\ge 20\pi$, this square contains all rectangles of the form
\[
R_j=\{w: |\Re w|< \log 2,|\Im w-2j\pi|< \pi\},   \quad j\in\Z, |j|\le K/(10\pi),
\]
of which there are at least five. Since $|f(z(r))| = M(r)$, we deduce that, for $r\ge r(f,K)$, $r\notin E$, there are at least five quadrilaterals of the form $Q_{r,K,j}=g^{-1}(R_j)$ in $D_{r,K}$ such that $z(r)\in Q_{r,K,0}$,~$f$ acts univalently on $Q_{r,K,j}$, and
\begin{align*}
f(Q_{r,K,j})&=f(z(r))\exp(R_j)\\
&=\{w:\tfrac12 M(r)<|w|<2M(r),|\arg(w/f(z(r)))|<\pi\},
\end{align*}
as required. The union of the closures of these $Q_{r,K,j}$, $|j|\le K/(10\pi)$, forms the required quadrilateral $Q_{r,K}$, which contains $z(r)$ since $z(r)\in Q_{r,K,0}$.

The estimate~\eqref{size} follows by expressing~$f$ in $D_{r,K}$ as in \eqref{logform} and again considering the function in two stages as $f(z)=f(z(r))\exp g(z)$.
\end{proof}
Next we use Theorem~\ref{quads} to construct uncountably many Eremenko points, each with a different `Wiman--Valiron itinerary' determined by a sequence of quadrilaterals to which Theorem~\ref{quads} has been applied successively, and each lying in $A_R(f)$ for the same suitably defined value of~$R$. In the proof of Theorem~\ref{Epoints} we use the following notation for an open annulus:
\[
A(r,R)=\{z:r<|z|<R\},\quad 0<r<R.
\]
\begin{theorem}\label{Epoints}
Let~$f$ be a {\tef}. There exists $R_1(f)\ge R(f)$ such that if $r_0\ge R_1(f)$, then there exist sequences of positive numbers $(r_n)$, complex numbers $(z_n)$, and quadrilaterals  $(Q_n)$, each of which can be partitioned into the union of five quadrilaterals with interiors $Q_{n,j}$, for $j=-2,-1,0,1,2$, labelled in anticlockwise order with respect to the origin, such that, for $n\ge 0$,
\begin{equation}\label{prop2}
\tfrac54 r_n\le |z_n| \le \tfrac74 r_n\;\;\text{and}\;\; r_{n+1}=|f(z_n)|=M(|z_n|),
\end{equation}
\begin{equation}\label{prop1}
z_n\in Q_{n,0}\subset Q_n\subset A(r_n,2r_n),
\end{equation}
and
\begin{equation}\label{prop3}
f \text{ maps } Q_{n,j} \text{ univalently onto } A_{n+1}, \qfor j=-2,-1,0,1,2,
\end{equation}
where
\[
A_{n+1}=\{w:\tfrac12 r_{n+1}<|w|<2r_{n+1},|\arg(w/f(z_n))|<\pi\}.
\]
Furthermore,
\begin{itemize}
\item[(a)]
the sequence $M^{-n}(r_n)$, $n\ge 0$, is strictly increasing and its limit~$R$ satisfies
\begin{equation}\label{Rlim}
r_0 <|z_0|<R<2r_0;
\end{equation}
\item[(b)]
for each sequence of the form $j_n=\pm 1$, $n\ge 0$, there exists a unique point $z(j_n)\in \overline{Q_{0,j_0}}$ with {\it Wiman--Valiron itinerary} $(j_n)_{n\ge 0}$, in the sense that
\begin{equation}\label{prop4}
f^n(z(j_n)) \in \overline{Q_{n,j_n}}, \quad \text{for } n\ge 0,
\end{equation}
and $z(j_n)\in A_R(f)$;
\item[(c)]
if~$f$ has no {\mcfc}s, then each $z(j_n)$ lies in $J(f)$;
\item[(d)] for each sequence of the form $j_n=\pm 1$, $n\ge 0$, and $k\in \N$, $f^k(z(j_n))$ is the unique point in $\overline{Q_{k,j_k}}$ with itinerary $(j_{k+n})_{n\ge 0}$, and $f^k(z(j_n))\in A_{M^k(R)}(f)$.
    \end{itemize}
\end{theorem}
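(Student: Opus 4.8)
The plan is to produce all of the required data by iterating Theorem~\ref{quads}. Fix $K=20\pi$, so that Theorem~\ref{quads} supplies exactly the five sub-quadrilaterals indexed by $j\in\{-2,-1,0,1,2\}$, and let $E_K$ be the associated exceptional set. Choose $R_1(f)\ge R(f)$ so large that $R_1(f)$ exceeds the threshold $r(f,K)$ from the proof of Theorem~\ref{quads}, that $\int_{E_K\cap[R_1(f),\infty)}(1/t)\,dt<\log(7/5)$, that $N(r)>7K$ for $r\ge\tfrac54 R_1(f)$, and that the error terms in \eqref{logform} (and in the analogous approximation of $\log M(r)$) are small there. Now build the sequences inductively: given $r_n\ge R_1(f)$, the identity $\int_{5r_n/4}^{7r_n/4}(1/t)\,dt=\log(7/5)$ exceeds the logarithmic measure of $E_K$ on $[\tfrac54 r_n,\tfrac74 r_n]$, so we may pick $s_n\in[\tfrac54 r_n,\tfrac74 r_n]\setminus E_K$; set $z_n:=z(s_n)$, $r_{n+1}:=M(s_n)=|f(z_n)|$, and let $Q_n:=Q_{s_n,K}$ with sub-quadrilateral interiors $Q_{n,j}$ and cut annulus $A_{n+1}$ as in Theorem~\ref{quads} applied at radius $s_n$. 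Then \eqref{prop2} and \eqref{prop3} hold by construction, and \eqref{prop1} holds because $N(s_n)>7K$ forces $Q_n\subset D_{s_n,K}\subset A(r_n,2r_n)$; since $r_{n+1}=M(s_n)\ge M(\tfrac54 r_n)>r_n$, the $r_n$ increase to $\infty$, so they stay $\ge R_1(f)$ and the induction proceeds.

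For (a), put $v_n:=M^{-n}(r_n)$. Then $v_{n+1}=M^{-(n+1)}(M(|z_n|))=M^{-n}(|z_n|)>M^{-n}(r_n)=v_n$ because $|z_n|\ge\tfrac54 r_n>r_n$ and $M^{-n}$ is increasing, so $(v_n)$ strictly increases. Applying $M^{-1}$ repeatedly to the pair $(|z_n|,r_n)$, using $M^{-k}(r_n)\ge s_{n-k}$ and the classical relation $\tfrac{d}{d\log r}\log M(r)\asymp N(r)$, gives $\log(v_{n+1}/v_n)\le\log(7/5)\big/\prod_{j<n}N(s_j)$, a summable series with sum $<\log2$ once $R_1(f)$ is large. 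Hence $v_n\to R$ with $r_0=v_0<v_1=|z_0|<R<2v_0=2r_0$, which is \eqref{Rlim}.

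For (b), fix $j_n=\pm1$, $n\ge0$. For $n\ge1$, $\overline{Q_{n,j_n}}$ has moduli in $[r_n,2r_n]$, hence lies in the closure $\overline{A_n}=\{w:\tfrac12 r_n\le|w|\le 2r_n\}$ of the cut annulus $A_n=f(Q_{n-1,j_{n-1}})$, and $f$ maps $\overline{Q_{n-1,j_{n-1}}}$ continuously onto $\overline{A_n}$; so $f(\overline{Q_{n-1,j_{n-1}}})\supseteq\overline{Q_{n,j_n}}$. Therefore the sets $P_N:=\{z\in\overline{Q_{0,j_0}}:f^k(z)\in\overline{Q_{k,j_k}}\ \text{for}\ 0\le k\le N\}$ are nonempty (peel back from level $N$), compact, and nested, so $z(j_n)$ may be taken to be any point of $\bigcap_N P_N$; it has Wiman--Valiron itinerary $(j_n)$. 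Uniqueness follows from \eqref{size}: along the itinerary each pull-back multiplies diameters by at most $1/\big(c\,N(s_k)M(s_k)/s_k\big)\to0$, so $\diam P_N\to0$ and $\bigcap_N P_N$ is a single point. For $z(j_n)\in A_R(f)$ one must check $|f^n(z(j_n))|\ge M^n(R)$: the orbit in fact remains in the part of $\overline{Q_{n,j_n}}$ of modulus $\ge M^n(R)$. Indeed $f^{n+1}(z(j_n))\in\overline{Q_{n+1,j_{n+1}}}$ has modulus $\ge r_{n+1}=M(s_n)$; strengthening this to modulus $\ge M^{n+1}(R)$ by a downward induction over the levels $\le N$ and then letting $N\to\infty$, and feeding it through the linear approximation \eqref{logform} on $Q_{n,j_n}$ together with $M^n(R)=\lim_{L}M^{n-L}(r_L)$, yields $|f^n(z(j_n))|\ge M^n(R)$. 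Controlling the error terms of \eqref{logform} through all the levels in this last step is the main technical obstacle; everything else is soft.

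Parts (c) and (d) are then short. For (d): the shifted data $(r_{k+n})_n$, $(z_{k+n})_n$, $(Q_{k+n})_n$ satisfy the hypotheses of the construction with $r_0$ replaced by $r_k\ge R_1(f)$, and $\lim_n M^{-n}(r_{k+n})=\lim_n M^k\big(M^{-(k+n)}(r_{k+n})\big)=M^k(R)$; applying (b) to this shifted construction and to $(j_{k+n})_n$ identifies $f^k(z(j_n))$—which has the correct shifted itinerary—as the unique point of $\overline{Q_{k,j_k}}$ with that itinerary, placing it in $A_{M^k(R)}(f)$. For (c): if $f$ has no \mcfc s and $z(j_n)$ lay in a Fatou component $U$, then, $z(j_n)$ being escaping, $(f^n|_U)$ would be normal with $f^n(U)\to\infty$; but arbitrarily near $z(j_n)$ lie points whose itineraries agree with $(j_n)$ up to some level $m$ and then differ, so their orbits land in distinct sub-quadrilaterals of $Q_{m+1}$ at step $m+1$, and the resulting separation is incompatible with the normality and bounded distortion of $f^{m+1}$ on $U$—this is Eremenko's argument \cite{aE89}.
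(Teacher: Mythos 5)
Your construction of the sequences $(r_n)$, $(z_n)$, $(Q_n)$ by iterating Theorem~\ref{quads} off the exceptional set, and your existence--uniqueness argument for $z(j_n)$ via the contraction estimate \eqref{size}, follow the paper's route, and part~(d) is handled the same way (shifting the construction and using $\lim_n M^{-n}(r_{k+n})=M^k(R)$). The genuine gap is the claim $z(j_n)\in A_R(f)$, which is precisely the step you leave as ``the main technical obstacle''. You try to verify the definition \eqref{ARfdef} directly, i.e.\ to prove $|f^n(z(j_n))|\ge M^n(R)$ for all $n$ by a downward induction pushing the error terms of \eqref{logform} through all levels; this is not carried out, and it also inverts the logic of the paper: there the inequality $M^n(R)\le|f^n(z(j_n))|$ (see \eqref{ineq1}) is \emph{deduced from} membership in $A_R(f)$, not used to prove it. The missing idea is a soft contradiction argument that needs no error control at all: if $z(j_n)\notin A_R(f)$, then $|f^{n_0}(z(j_n))|<M^{n_0}(R)$ for some $n_0$, whence $|f^n(z(j_n))|\le M^n(R')$ for all large $n$ and some $R'<R$; but since $M^{-n}(r_n)$ increases to $R>R'$, we have $M^n(R')<r_n$ for all large $n$, while \eqref{prop4} and \eqref{prop1} force $|f^n(z(j_n))|\ge r_n$ --- a contradiction. (In part~(a), your appeal to $\tfrac{d}{d\log r}\log M(r)\asymp N(r)$ is also shakier than necessary, since that comparison with the central index is only asymptotic outside exceptional sets; the paper instead notes $M(2r_n)>2M(|z_n|)=2r_{n+1}$, so that $M^{-n}(2r_n)$ is a strictly decreasing majorant of $M^{-n}(r_n)$, giving \eqref{Rlim} at once.)

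Part~(c) as you sketch it is also not a proof: your argument never uses the hypothesis that $f$ has no {\mcfc}s, and the contradiction you invoke does not exist. Normality of $(f^n)$ on a Fatou component is spherical equicontinuity, so points of a small disc may perfectly well land in different sub-quadrilaterals of $Q_{m+1}$ (these are separated by only about $r_{m+1}/N(|z_{m+1}|)$ and lie close to $\infty$ in the spherical metric); moreover $f^{m+1}$ need not be injective on the component, so ``bounded distortion'' is not available. The paper's argument is: if $z(j_n)\in D\subset F(f)$ for some open disc $D$, then $B_n\subset D$ for large $n$ (the diameters of the nested sets $B_n$ tend to $0$), so $\overline{Q_{n,j_n}}=f^n(B_n)\subset F(f)$ and hence $\overline{A_{n+1}}=f^{n+1}(B_n)\subset F(f)$; since the closure of the cut annulus is a full round annulus whose two complementary components both meet $J(f)$ for large $n$, this puts $\overline{A_{n+1}}$ inside a multiply connected Fatou component, contradicting the hypothesis. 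You should replace your (c) with this argument.
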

The quadrilaterals $Q_{n,j}$, $j=-2,\ldots,2$, and the cut annulus $A_{n+1}$ are illustrated schematically in Figure~\ref{pic1}.
\vspace*{0.5cm}
\begin{figure}[hbt!]
\begin{center}
\def\svgwidth{1.2\textwidth}

\begingroup%
  \makeatletter%
  \providecommand\color[2][]{%
    \renewcommand\color[2][]{}%
  }%
  \providecommand\transparent[1]{%

    \renewcommand\transparent[1]{}%
  }%
  \providecommand\rotatebox[2]{#2}%
  \ifx\svgwidth\undefined%
    \setlength{\unitlength}{5413.08910476bp}%
    \ifx\svgscale\undefined%
      \relax%
    \else%
      \setlength{\unitlength}{\unitlength * \real{\svgscale}}%
    \fi%
  \else%
    \setlength{\unitlength}{\svgwidth}%
  \fi%
  \global\let\svgwidth\undefined%
  \global\let\svgscale\undefined%
  \makeatother%
  \begin{picture}(1,0.50715027)%
    \put(0.2,0){\includegraphics[width=\unitlength]{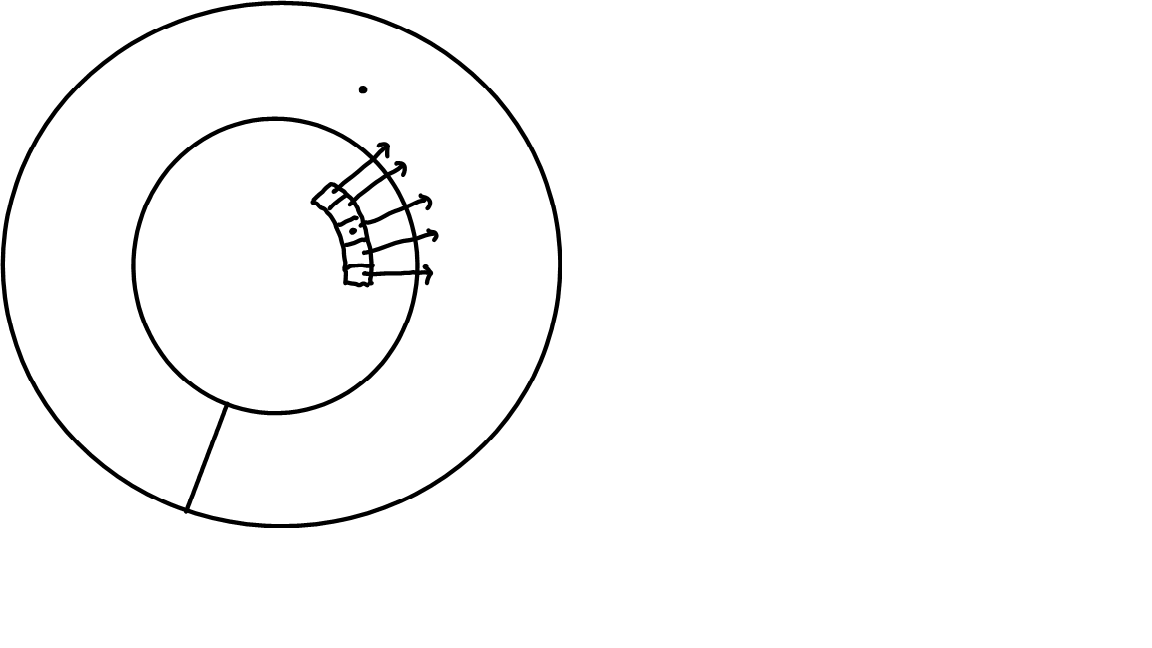}}%
    \put(0.47,0.35){\color[rgb]{0,0,0}\makebox(0,0)[lb]{\smash{$z_n$}}}%
    \put(0.44,0.41){\color[rgb]{0,0,0}\makebox(0,0)[lb]{\smash{$Q_{n,2}$}}}%
    \put(0.48,0.29){\color[rgb]{0,0,0}\makebox(0,0)[lb]{\smash{$Q_{n,-2}$}}}%
    \put(0.525,0.384){\color[rgb]{0,0,0}\makebox(0,0)[lb]{\smash{$f$}}}%
    \put(0.525,0.48){\color[rgb]{0,0,0}\makebox(0,0)[lb]{\smash{$f(z_n)$}}}%
    \put(0.6,0.3){\color[rgb]{0,0,0}\makebox(0,0)[lb]{\smash{$A_{n+1}$}}}%
\end{picture}%
\endgroup%
\end{center}
\vspace*{-1.5cm}
\caption{\small The quadrilaterals $Q_{n,j}$, $j=-2,\ldots,2$, and the cut annulus $A_{n+1}$}
\label{pic1}
\end{figure}

{\it Remarks}\; 1.\;It will be clear from the proof of Theorem~\ref{Epoints} that the constructed Eremenko point $z(j_n)$ actually satisfies $f^n(z(j_n)) \in Q_{n,j_n}$, for $n\ge 0$.

2.\;In this proof, and in later proofs in this paper, we often use the fact that
\begin{equation}\label{equiv}
z\in A_R(f)\;\;\text{ if and only if }\;\;f^k(z)\in A_{M^k(R)}(f),\qfor R>R(f), k\in \N,
\end{equation}
which follows immediately from the definition of $A_R(f)$ in \eqref{ARfdef}.
\begin{proof}[Proof of Theorem~\ref{Epoints}]
We apply Theorem~\ref{quads} with $K=20\pi$ and $E_K$ the corresponding exceptional set given by Theorem~\ref{quads}. Then there exists $R_1(f)>0$ so large that
\begin{equation}\label{PE1}
 \int_{E_K \cap (R_1(f), \infty)} \frac{1}{t}\,dt < \log \frac75,
\end{equation}
and
\begin{equation}\label{PE2}
\frac{K}{N(r)} < \frac18\;\;\text{and}\;\;M(r)>r,\;\;\text{for } r\ge R_1(f).
\end{equation}
As in Theorem~\ref{quads}, we let $D_{r,K} = \{z: |z-z(r)| < Kr/N(r)\}$ and note that, by \eqref{PE2},
\begin{equation}\label{PE3}
D_{r,K}\subset A\left(\tfrac78 r, \tfrac98 r\right),\quad\text{for } r\ge R_1(f).
\end{equation}

Take $r_0\ge R_1(f)$. It follows from Theorem~\ref{quads},~\eqref{PE1} and \eqref{PE2} that there exists
\begin{equation}\label{r0}
r'_0\in \left[\tfrac54 r_0,\tfrac74 r_0\right]\setminus E_K,
\end{equation}
and $z_0=z(r'_0)$ such that
\begin{itemize}
\item
the disc $D_0=D_{r'_0,K}$ contains a quadrilateral~$Q_0$ that can be partitioned into five quadrilaterals $Q_{0,j}, j=-2,-1,0,1,2$, with $z_0\in Q_{0,0}$, the interior of each of which is a univalent preimage under~$f$ of the cut annulus
\[
A_1=\{w:\tfrac12 M(|z_0|)<|w|<2M(|z_0|),|\arg(w/f(z_0))|<\pi\};
\]
\item
if $B$ is a compact subset of any disc $D \subset A_1$ and $B_{-1}$ is a component of $f^{-1}(B)\cap Q_{0}$, then
\[
\diam\, B \geq c(f,K) N(|z_0|) \frac{M(|z_0|)}{|z_0|}\, \diam\, B_{-1},
\]
where $c=c(f,K)>0$ is a constant that depends only on~$f$ and~$K$.
\end{itemize}
Note that, by \eqref{PE3} and \eqref{r0}, we have
\begin{equation}\label{D0}
Q_0\subset D_0\subset A(r_0,2r_0).
\end{equation}
Repeating this process with $r_1=|f(z_0)|=M(|z_0|)$ instead of $r_0$, we deduce
that there exists $r'_1\in \left[\tfrac54 r_1,\tfrac74 r_1\right]\setminus E_K$ and $z_1=z(r'_1)$ such that
\begin{itemize}
\item
the disc $D_1=D_{r'_1,K}$ contains a quadrilateral~$Q_1$ that can be partitioned into five quadrilaterals $Q_{1,j}, j=-2,-1,0,1,2$, with $z_1\in Q_{1,0}$, the interior of each of which is a univalent preimage under~$f$ of the cut annulus
\[
A_2=\{w:\tfrac12 M(|z_1|)<|w|<2M(|z_1|),|\arg(w/f(z_1))|<\pi\};
\]
\item
if $B$ is a compact subset of any disc $D \subset A_2$ and $B_{-1}$ is a component of $f^{-1}(B)\cap Q_{1}$, then
\begin{equation}
\diam\, B \geq c(f,K) N(|z_1|) \frac{M(|z_1|)}{|z_1|}\, \diam\, B_{-1}.\notag
\end{equation}
\end{itemize}

Carrying out this process repeatedly, we obtain sequences of positive numbers $(r_n)$, complex numbers $(z_n)$ such that $\frac54 r_n\le |z_n| \le \frac74 r_n$, and discs
\begin{equation}\label{Dn}
D_n=\{z: |z-z_n| < K|z_n|/N(|z_n|)\}\subset A(r_n,2r_n),\quad n\ge 0,
\end{equation}
quadrilaterals $Q_{n,j}\subset Q_n \subset D_n$, $j=-2,-1,0,1,2$, and cut annuli
\[
A_{n+1}=\{w:\tfrac12 r_{n+1}<|w|<2r_{n+1},|\arg(w/f(z_n))|<\pi\},
\]
that satisfy \eqref{prop2}, \eqref{prop1} and \eqref{prop3}, and also:

if $B$ is a compact subset of any disc $D \subset A_{n+1}$ and $B_{-1}$ is a component of $f^{-1}(B)\cap Q_{n}$, then
\begin{equation}\label{sizen}
\diam\, B \geq c(f,K) N(|z_n|) \frac{M(|z_n|)}{|z_n|}\, \diam\, B_{-1}.
\end{equation}

To prove part~(a) we note that, by the construction,
\[
r_{n+1}=M(|z_n|)>M(r_n)\;\;\text{and}\;\; M(2r_n)>2M(|z_n|)=2r_{n+1},\qfor n\ge 0.
\]
Hence,
\begin{equation}\label{Rineq}
r_0<|z_0|=M^{-1}(r_1)<M^{-2}(r_2)< \cdots <M^{-2}(2r_2)<M^{-1}(2r_1)<2r_0,
\end{equation}
from which part~(a) follows.

Now let $(j_n)$ denote any sequence whose elements are~$\pm 1$. It follows from \eqref{prop1} and \eqref{prop3} that
\[
f(\overline{Q_{n,j_n}})\supset \overline{A_{n+1}} \supset \overline{Q_{n+1,j_{n+1}}},\quad\text{for } n\ge 0.
\]

Thus, given $(j_n)$, we can construct a sequence of compact sets $B_n$ such that $B_0 = \overline{Q_{0,j_0}}$ and,
for $n \in \N$, $B_n$ is a component of $f^{-n}(\overline{Q_{n,j_n}})$ with $B_n \subset B_{n-1}$.
Then $\bigcap_{n=0}^{\infty}B_n$ is a nested intersection of compact sets and is therefore non-empty.

To prove part~(b) we show that $\bigcap_{n=0}^{\infty}B_n$ consists of a single point. By \eqref{sizen}, applied with $B=f^{k+1}(B_n)$ and $B_{-1}=f^k(B_n)$ for $k=0,1,\ldots, n-1$, and also~\eqref{prop2},~\eqref{prop1} and~\eqref{Dn},  we deduce that, for each $n \in \N$,
\begin{align*}
\diam \,B_n \leq & \left(\prod_{k=0}^{n-1} \frac{|z_k|}{c(f,K) N(|z_k|)M(|z_k|)}\right)\diam\, \overline{Q_{n,j_n}} \\
& \leq \left(\prod_{k=0}^{n-1} \frac{|z_k|}{c(f,K) N(|z_k|)M(|z_k|)}\right) \frac{2K|z_n|}{N(|z_n|)} \\
 & \leq \frac{K 2^{n+1}|z_0|}{c(f,K)^{n}N(|z_0|)N(|z_1|)\cdots N(|z_{n}|)} \to 0 \;\mbox{ as } n \to \infty,
\end{align*}
since $N(r)\to \infty$ as $r\to\infty$. Thus $\bigcap_{n=0}^{\infty}B_n$ consists of a single point.

For the given sequence $(j_n)$, we let $\bigcap_{n=0}^{\infty}B_n=\{z(j_n)\}$. Then, for each $n \in \N$, $f^n(z(j_n)) \in \overline{Q_{n,j_n}}$, so \eqref{prop4} holds.


We now show that $z(j_n)\in A_R(f)$. If not, there exists $R'<R$ and $N_1\in\N$ such that
\[
|f^n(z(j_n))|\le M^n(R'),\quad\text{for } n\ge N_1.
\]
But, by \eqref{Rineq}, there also exists $N_2\in\N$ such that
\[
M^{-n}(r_n)>R',\quad\text{for } n\ge N_2,
\]
so
\[
|f^n(z(j_n))|< r_n,\quad \text{for }n\ge \max\{N_1,N_2\},
\]
a contradiction to \eqref{prop4}. This proves part~(b).

To prove part~(c), we observe that if there exists an open disc~$D$ such that $z(j_n)\in D\subset F(f)$, then there exists $N_0\in\N$ such that
\[
B_n\subset D,\qfor n\ge N_0.
\]
Since $B_n$ is a component of $f^{-n}(\overline{Q_{n,j_n}})$, we deduce that
\[
\overline{Q_{n,j_n}}=f^n(B_n)\subset f^n(D)\subset F(f),\qfor n\ge N_0,
\]
so
\[
\overline{A_{n+1}}= f^{n+1}(B_n)\subset F(f), \qfor n\ge N_0,
\]
and hence $\overline{A_{n+1}}$ is contained in a {\mcfc} for~$n$ sufficiently large.

Finally, to prove part~(d) we follow the construction in part~(a), but start from $Q_{k,j_k}$ instead of $Q_{0,j_0}$, with the itinerary $(j_{k+n})$ instead of $(j_n)$, and use the facts that $f^k(z)\in A_{M^k(R)}(f)$ whenever $z\in A_R(f)$, and
\[
\lim_{n\to\infty}M^{-n}(r_{k+n})=\lim_{n\to\infty}M^k(M^{-n-k}(r_{k+n}))=M^k(R).\qedhere
\]
\end{proof}

\section{Proofs of Theorems~\ref{I(f)} and~\ref{A(f)}}
\setcounter{equation}{0}\label{I(f)proof}
To prove Theorem~\ref{I(f)}, we need some results from plane topology, which we state for the reader's convenience. The results in the following lemma are classical; see \cite[pages 84~and~143]{New}.

 \begin{lemma}\label{Newman}
 \begin{itemize}
 \item[(a)] If $E_0$ is a continuum in $\hat{\C}$, $E_1$ is a closed subset of $E_0$ and $C$ is a component of
 $E_0\setminus E_1$, then $\overline{C}$ meets $E_1$.
 \item[(b)] If $C_1$ and $C_2$ are two components of a closed set $E$ in $\hat{\C}$, then there is a Jordan curve
 in $\hat{\C}\setminus E$ that separates $C_1$ and $C_2$.
 \end{itemize}
 \end{lemma}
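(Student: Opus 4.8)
Both parts are classical facts of plane topology: part~(a) is the ``boundary bumping lemma'', and part~(b) is the standard statement that distinct components of a closed planar set can be separated by a Jordan curve lying in the complement. The plan is to recall the usual arguments, treating the two parts in turn; throughout we may of course assume in part~(a) that $\emptyset\neq E_1\subsetneq E_0$, the other cases being vacuous.

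For part~(a) I would argue by contradiction, supposing that $\overline{C}\cap E_1=\emptyset$. Since $\overline{C}$ is connected, is contained in $E_0\setminus E_1$, and contains the component $C$, maximality of $C$ forces $\overline{C}=C$; thus $C$ is closed in $E_0$, hence compact, and disjoint from the compact set $E_1$. By normality of the compact metric space $E_0$ I would choose an open set $U$ with $C\subseteq U\subseteq\overline{U}\subseteq E_0\setminus E_1$, so that $\overline{U}$ is a compact subset of $E_0$ missing $E_1$ and $\partial U=\overline{U}\setminus U$ is compact and disjoint from $C$. The key point is that $C$ is a component of the \emph{compact} Hausdorff space $\overline{U}$---again by maximality of $C$ among connected subsets of $E_0\setminus E_1\supseteq\overline{U}$---so $C$ coincides with the quasicomponent in $\overline{U}$ of any of its points. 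Covering the compact set $\partial U$ by finitely many subsets of $\overline{U}$ that are clopen in $\overline{U}$ and contain $C$ but each avoid a prescribed point of $\partial U$, and intersecting them, one obtains a set $K$ that is clopen in $\overline{U}$ with $C\subseteq K$ and $K\cap\partial U=\emptyset$; hence $K\subseteq U$. Then $K$ is closed in $E_0$ (since $\overline{U}$ is) and open in $E_0$ (since $K=U\cap G$ for some open $G$), so $K$ is a nonempty proper clopen subset of the continuum $E_0$, a contradiction.

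For part~(b) I would first reduce to separating two disjoint compacta. As $\hat{\C}$ is compact, $E$ is compact, so in the compact Hausdorff space $E$ the component $C_1$ again coincides with its quasicomponent; thus there is a set $K_1$ that is clopen in $E$ with $C_1\subseteq K_1$ and $K_1\cap C_2=\emptyset$, and putting $K_2=E\setminus K_1$ gives $E=K_1\cup K_2$ with $K_1,K_2$ disjoint, compact, and $C_i\subseteq K_i$. Next I would surround $K_1$ by a polyhedral region: fix a square grid of mesh smaller than $d(K_1,K_2)$ and let $L$ be the union of the closed grid squares that meet $K_1$. Then $K_1\subseteq\operatorname{int}L$ (every point of $K_1$ lies in the interior of the union of the squares containing it) and $L\cap K_2=\emptyset$, so $\partial L\subseteq\hat{\C}\setminus E$; moreover $\partial L$ is a finite union of grid edges which, after a small local modification at the finitely many vertices where four of its edges meet, becomes a finite disjoint union of polygonal Jordan curves $\gamma_1,\dots,\gamma_m\subseteq\hat{\C}\setminus E$. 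Finally, since $\hat{\C}\setminus\partial L=\operatorname{int}L\sqcup(\hat{\C}\setminus L)$, the component $\Omega_1$ of $\hat{\C}\setminus\partial L$ containing $C_1$ lies in $\operatorname{int}L$ while the component $\Omega_2$ containing $C_2$ lies in $\hat{\C}\setminus L$, so $\Omega_1\neq\Omega_2$; using the classical fact that two distinct complementary components of a finite disjoint union of Jordan curves in $\hat{\C}$ lie on opposite sides of at least one of those curves (an easy induction, peeling off an innermost curve), some single $\gamma_i$ separates $\Omega_1$ from $\Omega_2$ and hence separates $C_1$ from $C_2$, with $\gamma_i\subseteq\hat{\C}\setminus E$.

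I expect the only genuine obstacles to be routine topological bookkeeping. In part~(a) these are the use of ``component~$=$~quasicomponent'' in the compact space $\overline{U}$ together with the compactness argument that thickens $C$ to a clopen set avoiding $E_1$; in part~(b) they are the verification that the grid construction really yields a disjoint union of Jordan curves (the modification at four-edge vertices) and the reduction to a single separating curve. As all of this is entirely standard, in the paper one would, as above, simply refer to \cite{New}.
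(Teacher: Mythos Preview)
Your proposal is correct: both arguments you sketch are the standard ones, and you rightly anticipate that the paper does not actually prove the lemma but simply cites Newman's textbook \cite[pages 84 and 143]{New}. There is nothing to add.
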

Lemma~\ref{Newman}\,(a) has the following corollary, which we use frequently.
\begin{corollary}\label{Newman-cor}
Let $\Gamma$ be an {\ucm} which meets the circle $C=\{z:|z|=r\}$, where $r>0$. Then $\Gamma\cap \{z:|z|\ge r\}$ has at least one component that is an  {\ucm}, $\Gamma'$ say, and any such component satisfies $\Gamma'\cap C\ne\emptyset$.
\end{corollary}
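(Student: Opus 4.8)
The plan is to pass to the Riemann sphere $\hat\C$, where $\hat\Gamma:=\Gamma\cup\{\infty\}$ is a continuum and $\hat K:=(\Gamma\cap\{|z|\ge r\})\cup\{\infty\}$ is a closed subset of $\hat\Gamma$, and to exploit the boundary‑bumping principle of Lemma~\ref{Newman}(a). First note that, since $\Gamma$ is connected, unbounded and meets $C$, it meets $\{|z|=s\}$ for every $s\ge r$; and if $\Gamma\subseteq\{|z|\ge r\}$ then $\Gamma\cap\{|z|\ge r\}=\Gamma$ is itself an {\ucm} meeting $C$ and there is nothing to prove, so we may assume $\Gamma$ contains a point of modulus less than $r$, making $E:=\Gamma\cap\{|z|\le r\}$ a nonempty proper closed subset of $\hat\Gamma$.

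For the existence assertion, let $W$ be the component of $\hat\Gamma\setminus E=(\Gamma\cap\{|z|>r\})\cup\{\infty\}$ containing $\infty$. By Lemma~\ref{Newman}(a) the closure $\overline W$ meets $E\subseteq\C$; hence $W\ne\{\infty\}$, so $\overline W$ is a continuum containing $\infty$ and a point of modulus $\le r$, and therefore meets $C$. Since $\overline W\subseteq\hat K$ and contains $\infty$, it lies in the component $\hat W_0$ of $\infty$ in the compact set $\hat K$, so $\hat W_0\cap C\ne\emptyset$. The key structural observation is now that \emph{the components of $\hat W_0\setminus\{\infty\}$ are precisely the unbounded components of $K$}: applying Lemma~\ref{Newman}(a) to the continuum $\hat W_0$ with $E=\{\infty\}$ (valid because $\hat W_0\ne\{\infty\}$) shows every component of $\hat W_0\setminus\{\infty\}$ has $\infty$ in its closure and so is unbounded, each such component is a component of $K$ because a connected subset of $K$ meeting $\hat W_0$ lies in $\hat W_0$, and conversely every unbounded component of $K$ together with $\infty$ is connected and hence contained in $\hat W_0$. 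Consequently a point of $\hat W_0\cap C$ lies in some unbounded component of $K$, and that component is an {\ucm} meeting $C$.

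For the second assertion, suppose $\Gamma'$ is an unbounded component of $K$ with $\Gamma'\cap C=\emptyset$; then $\Gamma'$ is closed and $C$ is compact, so $\Gamma'\subseteq\{|z|\ge r+d\}$ for some $d>0$, and $\hat\Gamma':=\Gamma'\cup\{\infty\}$ is a continuum inside the open set $\{|z|>r\}\cup\{\infty\}$. One would like to say that $\hat\Gamma'$ is a component of the open subset $\hat\Gamma\cap(\{|z|>r\}\cup\{\infty\})$ of $\hat\Gamma$, so that Lemma~\ref{Newman}(a) applied with $E=\Gamma\cap\{|z|\le r\}$ forces $\hat\Gamma'=\overline{\hat\Gamma'}$ to meet $\{|z|\le r\}$ --- a contradiction. \emph{The main obstacle} is that all the unbounded components of $K$ are glued together at the single point $\infty$ on the sphere, so a priori the component of $\hat\Gamma\cap(\{|z|>r\}\cup\{\infty\})$ containing $\Gamma'$ could be strictly larger than $\hat\Gamma'$. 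I would handle this by working inside $\hat W_0$ instead: apply Lemma~\ref{Newman}(a) to $\hat W_0$ with the nonempty closed set $\hat W_0\cap C$; when $\Gamma'$ is the only unbounded component of $K$ avoiding $C$ this identifies $\hat\Gamma'$ as the relevant component and the contradiction goes through, and the remaining case, in which two or more unbounded components of $K$ avoid $C$, I expect to exclude by a separation argument using that $\Gamma$ is connected while $\Gamma'$ is separated from $C$ by the annulus $\{r\le|z|<r+d\}$: since $\Gamma'$ cannot be clopen in $\Gamma$, some point of $\Gamma'$ is a limit of $\Gamma\setminus\Gamma'$, which near that point lies in $K$, and tracking how those approaching pieces of $K$ must connect through $\Gamma$ to the part of $\Gamma$ inside $\{|z|<r\}$ --- necessarily crossing $C$ while staying in $K$ until it does --- should force $\Gamma'$ itself to reach $C$.
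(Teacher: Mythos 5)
Your existence argument is correct and is essentially the paper's own route, carried out more carefully: pass to $\hat\C$, apply Lemma~\ref{Newman}(a) to $\hat\Gamma$ with $E_1=\Gamma\cap\{|z|\le r\}$ to see that the component of $\infty$ bumps into $C$, and then your observation that the components of $\hat W_0\setminus\{\infty\}$ are exactly the unbounded components of $\Gamma\cap\{|z|\ge r\}$ (each closed, each bumping $\infty$) cleanly produces an unbounded continuum component meeting $C$. Up to that point there is nothing to object to.

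The second assertion, however, is not proved. You say so yourself ("I would handle this by\ldots", "I expect to exclude\ldots", "should force"), and the partial patch you do offer does not work even in the special case you claim to handle: applying Lemma~\ref{Newman}(a) to $\hat W_0$ with $E_1=\hat W_0\cap C$ does not "identify $\hat\Gamma'$", because the component of $\infty$ in $\hat W_0\setminus C$ still contains \emph{every} unbounded component of $K=\Gamma\cap\{|z|\ge r\}$ glued at $\infty$ (think of $\Gamma$ = the two coordinate axes: that component is all four open rays plus $\infty$), so the bumping conclusion again attaches to the glued object, not to $\Gamma'$. The concluding "tracking" heuristic also cannot be completed as described: since $\Gamma'$ is closed, no single other component of $K$ can accumulate on it, so the approach to $\Gamma'$ is necessarily through infinitely many pairwise disjoint components of $K$, and an individual approaching piece need neither reach $C$ nor be connectable to $\Gamma'$ inside $K$; a comb-type configuration (a spine $\Gamma'$ accumulated by countably many disjoint compact arcs of $K$ whose far ends enter the disc elsewhere, their "return" portions accumulating instead on a second unbounded component which does meet $C$) defeats every step of the proposed argument while keeping $\Gamma$ closed and connected. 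So the clause "any such component meets $C$" needs an entirely different idea, if it can be salvaged at all in this generality; note that the paper's own justification of it is the single phrase "by Lemma~\ref{Newman}(a) again", which in substance yields only the existence statement, and that the existence statement is the only part of the corollary invoked later in the paper (in Lemma~\ref{ucc-pullback} and in the proof of Theorem~\ref{I(f)}). In short: your existence half matches the paper and is sound; the universal half is a genuine gap in your write-up (and a point on which the paper itself is terse), and your proposed repair would fail for the gluing-at-infinity reason you yourself identified.
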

\begin{proof}
The set $\hat \Gamma=\Gamma\cup \{\infty\}$ is a continuum in $\hat \C$. Let $\hat\Gamma'$ be a component of $\hat \Gamma\setminus \{z:|z|\le r\}$ that contains $\infty$. Then the closure of $\hat \Gamma'$ meets~$C$, by Lemma~\ref{Newman}\,(a). Hence, by Lemma~\ref{Newman}\,(a) again,  we can take the required set $\Gamma'$ to be $\hat\Gamma'$ with $\infty$ removed.
\end{proof}
The proof of Theorem~\ref{I(f)} uses the sequences $(r_n)$, $(z_n)$, $(Q_n)$ and $(A_{n+1})$, $n\ge 0$, and the radius $R>0$, which were defined in Theorem~\ref{Epoints}. Also, we put
\[
C_n=\{z:|z|=2r_n\},\quad n\ge 0,
\]
which, for $n\ge 1$, is a subset of the boundary of the cut annulus~$A_n$.

Recall that the set $Q_n$, $n\ge 0$, consists of five quadrilaterals
\[
Q_{n,j},\; j=-2,\ldots,2,
\]
arranged in anticlockwise order around the origin, each of which is mapped one-one and conformally by~$f$ onto the cut annulus $A_{n+1}$. We also put
\[
E_n=\partial Q_n \cap f^{-1}(C_{n+1}),
\]
which is the outer edge of the quadrilateral $Q_n$.

The following concept is fundamental to our proof. An {\it escape channel} at level~$n$, $n\ge 0$, is a triple $\Sigma=(\Gamma^-, \FC, \Gamma^+)$, where $\Gamma^-, \FC, \Gamma^+$ are disjoint {\uca} such that
\begin{itemize}
\item[(a)] $\Gamma^-$, $\FC$ and $\Gamma^+$ all lie in $\{z:|z|\ge r_n\}$ and meet the circle $C_n$ in closed sets that include points of the form $r_ne^{i\theta^-}$, $r_ne^{i\theta}$ and $r_ne^{i\theta^+}$, respectively, where $\theta^-<\theta<\theta^+<\theta^-+2\pi$;
\item[(b)] $\Gamma^-\cup \Gamma^+ \subset I(f)^c$ and $\FC\subset A_{M^n(R)}(f)$.
\end{itemize}

The {\it interior} of an $n$-th level escape channel $\Sigma=(\Gamma^-, \FC, \Gamma^+)$ is the complementary component of $\Gamma^-\cup C_n\cup \Gamma^+$ that contains $\FC\setminus C_n$. Two $n$-th level escape channels are called {\it disjoint} if their interiors are disjoint. For any $n$-th level escape channel~$\Sigma$ the {\it entry} to~$\Sigma$ is the largest closed arc of $C_n$ that forms part of the boundary of the interior of~$\Sigma$. If the entry of one $n$-th level escape channel~$\Sigma$ is a subset of the entry of another $n$-th level escape channel~$\Sigma'$, then we write $\Sigma\prec \Sigma'$.

The next lemma shows that any $n$-th level escape channel can be pulled back under~$f$ and truncated to produce several disjoint $(n-1)$-th level escape channels.

\begin{lemma}\label{ucc-pullback}
Let $(\Gamma^-, \FC, \Gamma^+)$ be an $n$-th level escape channel, where $n\ge 1$. Then there exist four disjoint $(n-1)$-th level escape channels, $(\Gamma_k^-, \FC_k, \Gamma_k^+)$, $k=0,\ldots,3$, such that
\begin{equation}\label{channels}
f(\Gamma_k^-)\subset \Gamma^-,\;\;f(\FC_k)\subset \FC,\;\;f(\Gamma_k^+)\subset \Gamma^+,\qfor k=0,\ldots,3.
\end{equation}
\end{lemma}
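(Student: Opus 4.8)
The plan is to pull the given $n$-th level escape channel $(\Gamma^-,\FC,\Gamma^+)$ back through the five conformal maps $f|_{Q_{n-1,j}}:Q_{n-1,j}\to A_n$ of \eqref{prop3}, to take the components of the full preimages through the pulled-back boundary points, to truncate these to recover the required cyclic order, and to discard one of the five resulting triples. I begin by noting that two of the three conditions in the definition of an $(n-1)$-th level escape channel come for free. Since $I(f)$ is completely invariant we have $f^{-1}(I(f)^c)=I(f)^c$, and by \eqref{equiv} we have $f^{-1}(A_{M^n(R)}(f))=A_{M^{n-1}(R)}(f)$; hence once $f(\Gamma_k^\pm)\subset\Gamma^\pm$ and $f(\FC_k)\subset\FC$ it follows that $\Gamma_k^-\cup\Gamma_k^+\subset I(f)^c$ and $\FC_k\subset A_{M^{n-1}(R)}(f)$. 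Moreover $\Gamma^-,\FC,\Gamma^+\subset\{z:|z|\ge r_n\}$, and if $|z|<r_{n-1}$ then $|z|<|z_{n-1}|$ by \eqref{prop2}, so $|f(z)|\le M(|z|)<M(|z_{n-1}|)=r_n$; thus $f^{-1}(\{w:|w|\ge r_n\})\subset\{z:|z|\ge r_{n-1}\}$, and so any subcontinuum of $f^{-1}(\Gamma^-)$, $f^{-1}(\FC)$ or $f^{-1}(\Gamma^+)$ automatically lies in $\{z:|z|\ge r_{n-1}\}$. So it remains to produce, for four values of $j$, pairwise disjoint triples of {\uca} $\Gamma_j^-,\FC_j,\Gamma_j^+$ with $f(\Gamma_j^\pm)\subset\Gamma^\pm$, $f(\FC_j)\subset\FC$ that meet $C_{n-1}$ in the correct cyclic order.

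Fix $j$. Since $f$ is univalent on $Q_{n-1,j}$ with image $A_n$, its inverse $\psi_j$ extends continuously to $A_n$ together with $C_n$ minus the cut point of $A_n$, and maps $C_n$ minus that point homeomorphically and orientation-preservingly onto the outer edge of $Q_{n-1,j}$ minus its two endpoints; as the five quadrilaterals are labelled anticlockwise, the three points $2r_ne^{i\theta^-},2r_ne^{i\theta},2r_ne^{i\theta^+}$ at which $\Gamma^-,\FC,\Gamma^+$ meet $C_n$ pull back to three points $q_j^-,q_j,q_j^+$ lying in the same cyclic order about the origin on that outer edge, and in particular inside the circle $C_{n-1}$ since $\overline{Q_{n-1,j}}\subset\{z:|z|<2r_{n-1}\}$. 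I now take $\Gamma_j^-,\FC_j,\Gamma_j^+$ to be the components of the closed sets $f^{-1}(\Gamma^-),f^{-1}(\FC),f^{-1}(\Gamma^+)$ containing $q_j^-,q_j,q_j^+$ respectively; then $f(\Gamma_j^\pm)\subset\Gamma^\pm$, $f(\FC_j)\subset\FC$, and these three sets are disjoint because $\Gamma^-,\FC,\Gamma^+$ are.

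To see that $\Gamma_j^-$ (and likewise $\FC_j,\Gamma_j^+$) is unbounded, suppose not; then by Lemma~\ref{Newman}\,(b) there is a Jordan curve $J$ in $\C\setminus f^{-1}(\Gamma^-)$ enclosing $\Gamma_j^-$. Since $f$ is bounded on the closed topological disc bounded by $J$, the argument principle shows that $f$ maps no point inside $J$ into the unbounded complementary component of $\C\setminus f(J)$; but that component contains the unbounded continuum $\Gamma^-$, while $q_j^-$ lies inside $J$ with $f(q_j^-)\in\Gamma^-$ — a contradiction. Hence $\Gamma_j^-,\FC_j,\Gamma_j^+$ are {\uca}; by the first paragraph they lie in $\{z:|z|\ge r_{n-1}\}$, and in $I(f)^c$, respectively $A_{M^{n-1}(R)}(f)$; and each meets $C_{n-1}$, since it is unbounded and contains one of the points $q_j^-,q_j,q_j^+$, which lies inside $C_{n-1}$.

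It remains to recover the cyclic order on $C_{n-1}$ and to make the interiors of the channels disjoint — and this is where I expect the main difficulty. The components $\Gamma_j^-,\FC_j,\Gamma_j^+$ may behave wildly, cross $C_{n-1}$ many times, and for different $j$ may even fail to be disjoint if the original channel wraps around $C_n$, so the argument must be refined: within them one should pass to suitable unbounded subcontinua that stay near $q_j^-,q_j,q_j^+$ until they have crossed $C_{n-1}$, then truncate at $C_{n-1}$ so that the three continua cross it in the order $\theta^-<\theta<\theta^+$ and bound disjoint interiors, using the interior-of-channel construction and the disjointness of the quadrilaterals; since the cut of $A_n$ is the image of the edges along which the five quadrilaterals are glued, this works for only four of the five quadrilaterals, and discarding one candidate leaves four pairwise disjoint $(n-1)$-th level escape channels $(\Gamma_k^-,\FC_k,\Gamma_k^+)$, $k=0,\dots,3$, satisfying \eqref{channels}. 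In summary, the unboundedness of the pulled-back continua follows cleanly from Lemma~\ref{Newman}\,(b) and the argument principle, whereas the delicate part is the bookkeeping of the cyclic order and the disjointness of the channels' interiors, together with the loss of one of the five candidates to the cut.
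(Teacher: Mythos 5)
Your overall plan is the same as the paper's (pull back through the five univalent maps $f:Q_{n-1,j}\to A_n$, use complete invariance of $I(f)$ and \eqref{equiv} for the membership conditions, use Lemma~\ref{Newman}\,(b) plus the open mapping/maximum principle for unboundedness, truncate, and lose one of the five candidate triples to the cut). Those parts are fine. But the heart of the lemma is precisely the step you defer: showing that the twelve pulled-back continua can actually be arranged into four escape channels at level $n-1$, i.e.\ that suitable unbounded closed connected subsets of them meet $C_{n-1}$ at points in the required cyclic order and that the resulting channel interiors are pairwise disjoint. You explicitly flag this as ``where I expect the main difficulty'' and only gesture at a refinement, so as written the proof is incomplete at its decisive point. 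Moreover, the refinement you suggest --- passing to subcontinua that ``stay near $q_j^-,q_j,q_j^+$ until they have crossed $C_{n-1}$'' --- is not something you can arrange: a component of $f^{-1}(\Gamma^\pm)$ or $f^{-1}(\FC)$ goes wherever it goes, and nothing forces it to remain close to its attachment point before reaching $C_{n-1}$.

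What the paper does instead is purely order-theoretic and uses disjointness rather than proximity: the preimage triples meet the outer edge $E_{n-1}$ in pairwise disjoint compact sets lying in anticlockwise order along $E_{n-1}$, the preimage continua have no points in the interior of $Q_{n-1}$ and are pairwise disjoint, and one then applies Corollary~\ref{Newman-cor} to replace each of the twelve continua by an {\ucc} subset of its intersection with $\{z:|z|\ge 2r_{n-1}\}$ meeting $C_{n-1}$; since the twelve original continua are disjoint and attached to $E_{n-1}$ in a fixed order, these truncated subsets meet $C_{n-1}$ in the same cyclic order, which is what yields four disjoint channels satisfying \eqref{channels}. Two smaller points: your claim that the three marked points pull back ``in the same cyclic order'' inside a single quadrilateral glosses over the real role of the cut --- depending on where the cut lies relative to $\theta^-,\theta,\theta^+$, the linear order of the three attachment sets along each quadrilateral's outer edge may be a rotation of $(-,\FC,+)$, and the four good triples are obtained by regrouping across adjacent quadrilaterals (five blocks along the arc $E_{n-1}$ give only four consecutive correctly ordered triples), which is the actual reason for ``four, not five''; and your worry that components attached to different quadrilaterals might coincide is legitimate but is exactly what the ordering-plus-truncation argument above is designed to handle, so it needs to be addressed, not just noted.
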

\begin{proof}
Since~$f$ maps each quadrilateral $Q_{n-1,j}$, $j=-2,\ldots,2$, univalently onto the cut annulus $A_n$, we deduce that there are at least four distinct triples of preimage components of $(\Gamma^-,\FC,\Gamma^+)$ that meet $E_{n-1}$ in disjoint compact sets, which lie in order anticlockwise along $E_{n-1}$; see Figure~\ref{pic2}. All these preimage sets are unbounded, by Lemma~\ref{Newman}\,(b), since~$f$ is an open map, and none of them have any points in the interior of $Q_{n-1}$. The preimage components of $\FC$ are all in $A_{M^{n-1}(R)}(f)$ and the preimage components of $\Gamma^-$ and $\Gamma^+$ are all in $I(f)^c$.
\vspace*{0.5cm}
\begin{figure}[hbt!]
\begin{center}
\def\svgwidth{1.2\textwidth}

\begingroup%
  \makeatletter%
  \providecommand\color[2][]{%
    \renewcommand\color[2][]{}%
  }%
  \providecommand\transparent[1]{%

    \renewcommand\transparent[1]{}%
  }%
  \providecommand\rotatebox[2]{#2}%
  \ifx\svgwidth\undefined%
    \setlength{\unitlength}{5413.08910476bp}%
    \ifx\svgscale\undefined%
      \relax%
    \else%
      \setlength{\unitlength}{\unitlength * \real{\svgscale}}%
    \fi%
  \else%
    \setlength{\unitlength}{\svgwidth}%
  \fi%
  \global\let\svgwidth\undefined%
  \global\let\svgscale\undefined%
  \makeatother%
  \begin{picture}(1,0.50715027)%
    \put(0.05,0){\includegraphics[width=\unitlength]{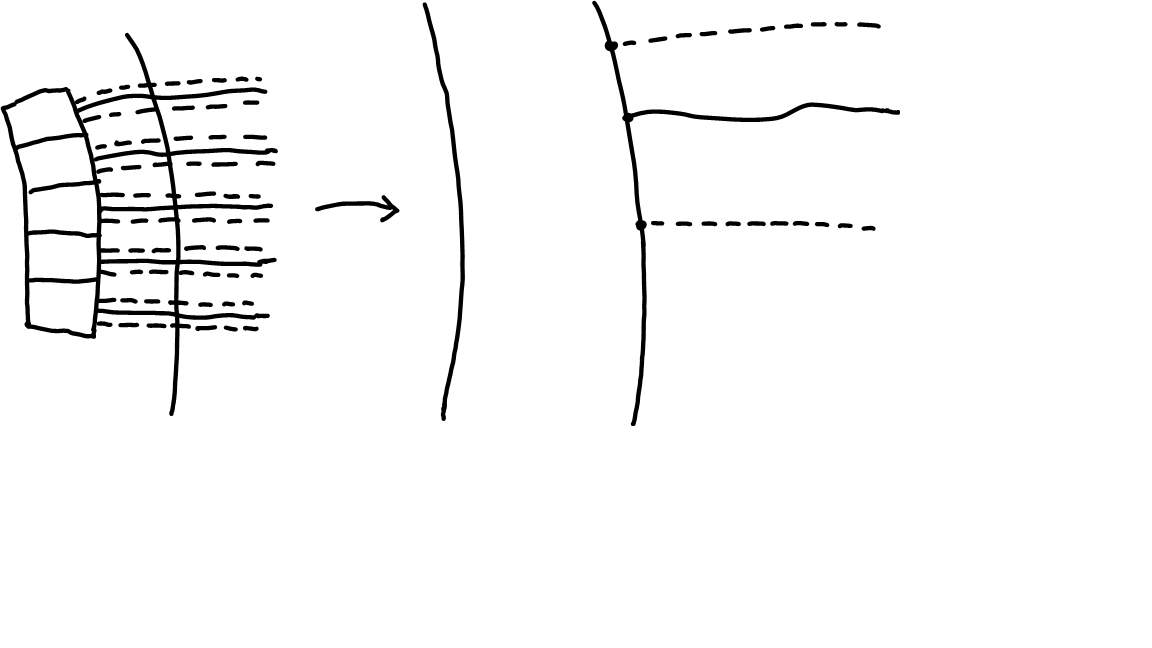}}%
    \put(0.01,0.38){\color[rgb]{0,0,0}\makebox(0,0)[lb]{\smash{$Q_{n-1}$}}}%
    \put(0.05,0.5){\color[rgb]{0,0,0}\makebox(0,0)[lb]{\smash{$Q_{n-1,2}$}}}%
    \put(0.05,0.25){\color[rgb]{0,0,0}\makebox(0,0)[lb]{\smash{$Q_{n-1,-2}$}}}%
    \put(0.29,0.48){\color[rgb]{0,0,0}\makebox(0,0)[lb]{\smash{$\Phi_{2}$}}}%
    \put(0.29,0.28){\color[rgb]{0,0,0}\makebox(0,0)[lb]{\smash{$\Phi_{-2}$}}}%
    \put(0.35,0.4){\color[rgb]{0,0,0}\makebox(0,0)[lb]{\smash{$f$}}}%
    \put(0.75,0.34){\color[rgb]{0,0,0}\makebox(0,0)[lb]{\smash{$\Gamma^-$}}}%
    \put(0.75,0.44){\color[rgb]{0,0,0}\makebox(0,0)[lb]{\smash{$\Phi$}}}%
    \put(0.75,0.55){\color[rgb]{0,0,0}\makebox(0,0)[lb]{\smash{$\Gamma^+$}}}%
    \put(0.5,0.3){\color[rgb]{0,0,0}\makebox(0,0)[lb]{\smash{$A_{n}$}}}%
    \put(0.615,0.22){\color[rgb]{0,0,0}\makebox(0,0)[lb]{\smash{$C_{n}$}}}%
    \put(0.21,0.22){\color[rgb]{0,0,0}\makebox(0,0)[lb]{\smash{$C_{n-1}$}}}%
\end{picture}%
\endgroup%
\end{center}
\vspace*{-3.5cm}
\caption{\small Preimages of an $n$-th level escape channel}
\label{pic2}
\end{figure}

We choose four of these triples, which consist of twelve {\uca}, all pairwise disjoint, and for each of these {\uca} we take an {\ucc} subset of the intersection of the set with $\{z:|z|\ge 2r_{n-1}\}$ that meets $C_{n-1}$. This is possible by Corollary~\ref{Newman-cor}. These twelve subsets lie in the same order (around $C_{n-1}$) as do the twelve {\uca} comprising the four triples (along $E_{n-1}$). Hence these twelve subsets form four disjoint escape channels at level~$n-1$, which can be denoted by $(\Gamma_k^-, \FC_k, \Gamma_k^+)$, $k=0,\ldots,3$, and with this notation it follows by the construction that \eqref{channels} holds.
\end{proof}

We now give the proof of Theorem~\ref{I(f)}.

\begin{proof}[Proof of Theorem~\ref{I(f)}] Let $\Gamma$ be an {\ucm} lying entirely in $I(f)^c$. Without loss of generality we may suppose that $0\in \Gamma$. Then, for $n\ge 0$, we let $\Gamma_0(n,n)$ denote an unbounded component of $\Gamma\cap \{z:|z|\ge 2r_n\}$ that meets~$C_n$, which is possible by Corollary~\ref{Newman-cor}. The reason for the notation $\Gamma_0(n,n)$ will become clear shortly.

We also introduce a single component, $\FC_0$ say, of $A_R(f)$ which contains an Eremenko point in the quadrilateral $\overline{Q_{0,0}}$, as constructed in Theorem~\ref{Epoints}. For $n\ge 1$, the set $\FC_n=f^n(\FC_0)$ is a component of $A_{M^n(R)}(f)$ and $\FC_n\cap Q_n\ne \emptyset$, by Theorem~\ref{Epoints}\;(a), so we have $\FC_n\cap C_n \ne \emptyset$. Then let $\FC_0(n,n)$ denote an unbounded component of $\FC_n\cap \{z:|z|\ge 2r_n\}$ that meets~$C_n$, which is possible by Corollary~\ref{Newman-cor} again.

The triple $(\Gamma_0(n,n), \FC_0(n,n), \Gamma_0(n,n))$ can be thought of as a degenerate escape channel at level~$n$, for which the two {\uca} in $I(f)^c$ are identical. The proof of Lemma~\ref{ucc-pullback} can readily be adapted to show that there are four disjoint $(n-1)$-th level escape channels,
\[
(\Gamma_k^-(n-1,n), \FC_k(n-1,n), \Gamma_k^+(n-1,n)), \quad k=0,\ldots,3,
\]
such that
\begin{equation}\label{channels-n}
f(\Gamma_k^{\pm}(n-1,n))\subset \Gamma_0(n,n),\;\;f(\FC_k(n-1,n))\subset \FC_0(n,n),\qfor k=0,\ldots,3.
\end{equation}
We can now choose two of these four $(n-1)$-th level escape channels with the additional property that the interior of neither of these escape channels meets~$\Gamma$. This is possible since $\Gamma\subset I(f)^c$ and $\FC_k(n-1,n)\subset A_{M^{n-1}(R)}(f)$ for $k=0,\ldots,3$. We then relabel these two chosen escape channels as
\[
(\Gamma_k^-(n-1,n), \FC_k(n-1,n), \Gamma_k^+(n-1,n)), \quad k=0,1,
\]
and note that \eqref{channels-n} remains true.

We now apply Lemma~\ref{ucc-pullback} in this way repeatedly to produce, for all $n\ge 1$ and $0\le m<n$, a set of $2^{n-m}$ escape channels at level~$m$, denoted by
\[
\Sigma_k(m,n)=(\Gamma_k^-(m,n), \FC_k(m,n), \Gamma^+(m,n)),\quad k=0,\ldots, 2^{n-m}-1,
\]
such that
\[
f(\Gamma_k^{\pm}(m,n))\subset \Gamma_{[k/2]}^{\pm}(m+1,n),\;\;f(\FC_k(m,n))\subset \FC_{[k/2]}(m+1,n),\qfor k=0,\ldots,2^{n-m}-1,
\]
and, in addition,
\[
\Sigma_k(m,n)\prec \Sigma_{[k/2]}(m,n-1),\qfor k=0,\ldots,2^{n-m}-1.
\]
Hence we have, for $m=0$, by induction,
\begin{equation}\label{level0-1}
\Gamma_k^{\pm}(0,n)\subset I(f)^c,\;\; \FC_k(0,n)\subset A_R(f),\qfor k=0,\ldots, 2^n-1,
\end{equation}
and
\begin{equation}\label{level0-2}
\Sigma_k(0,n)\prec \Sigma_{[k/2]}(0,n-1),\qfor k=0,\ldots, 2^n-1,
\end{equation}
For $n\ge 1$ and $k=0,\ldots, 2^n-1$, we let $\sigma_k(0,n)$ denote the entry to the channel $\Sigma_k(0,n)$. Then each $\sigma_k(0,n)$ is a closed arc of $C_0$ which has endpoints in $\Gamma_k^{\pm}(0,n)\subset I(f)^c$ and contains a point of $\FC_k(0,n)\subset A_R(f)$, by \eqref{level0-1}. Also,
\begin{equation}\label{sigma}
\sigma_k(0,n)\subset \sigma_{[k/2]}(0,n-1),\qfor k=0,\ldots, 2^n-1,
\end{equation}
by \eqref{level0-2}. Now let
\[
S_n=\bigcup_{k=0}^{2^n-1} \sigma_k(0,n),\;\;n=1,2,\ldots.
\]
Then $(S_n)$ is a nested sequence of compact subsets of $C_0$, whose intersection~$S$ has, by \eqref{sigma}, uncountably many components, and all but at most countably many of these must be singletons.

Let $\{\zeta\}$ be a singleton component of~$S$. Then we deduce that there is a sequence of points $\zeta_n\in C_0$ and integers $k(n)$, for $n\ge 1$, such that
\begin{equation}\label{nested2}
\zeta_n\in \sigma_{k(n)}(0,n)\cap \FC_{k(n)}(0,n)\qfor n\ge 1,\;\;\text{and}\;\; \{\zeta\}=\bigcap_{n=1}^{\infty}\sigma_{k(n)}(0,n).
\end{equation}
In particular, $\zeta_n \to \zeta$ as $n\to\infty$.

Then $\hat \FC_{k(n)}(0,n)=\FC_{k(n)}(0,n)\cup \{\infty\}$, $n\ge 0$, forms a sequence of continua in $\hat\C$, so we may assume, by taking a subsequence if necessary, that $\hat \FC_{k(n)}(0,n)$ converges with respect to the Hausdorff metric on $\hat\C$ to a continuum in $\hat\C$ containing~$\infty$ and~$\zeta$; see \cite[pages 37--39]{kF89}. Since $A_R(f)$ is closed, the part of this limiting continuum in $\C$ is contained in $A_R(f)$. Hence $\zeta$ lies in an {\ucc} subset of $A_R(f)$, which we denote by $\FC_{\zeta}$. We denote by $I_{\zeta}$ the component of $I(f)\cap \{z:|z|\ge 2r_0\}$ that contains $\FC_{\zeta}$.

Suppose now that $\zeta$ and $\zeta'$ are distinct singleton components of~$S$. Then we claim that $I_{\zeta}$ and $I_{\zeta'}$ are disjoint. Indeed, it follows by \eqref{nested2} that there are {\uca} in $I(f)^c$, which lie in $\{z:|z|\ge 2r_0\}$ and which meet $C_0$ at points that lie on either side of~$\zeta$ and as close as we like to~$\zeta$. This proves our claim.

To summarise what we have proved, the set $I(f)\cap \{z:|z| \ge 2r_0\}$ has uncountably many components $I_{\zeta}$, $\zeta\in S$, each of which contains an {\ucm} $\FC_{\zeta}$ such that $\zeta\in \FC_{\zeta}\subset A_R(f)$. Moreover, for any two distinct singleton components of~$S$, $\{\zeta\}$ and $\{\zeta'\}$, there are {\uca} in $I(f)^c$, which lie in $\{z:|z|\ge 2r_0\}$ and separate $I_{\zeta}$ from $I_{\zeta'}$.

Now let~$D$ be any open disc that meets $J(f)$. Then, by the blowing up property of $J(f)$, there exists~$N\in\N$ such that
\[
D_n=\wt{f^n(D)} \supset \{z:|z|\le 2r_0\},\qfor n\ge N.
\]
Here the notation $\wt{U}$ denotes the union of the set~$U$ with all its bounded complementary components. By what we proved above, the set $\C\setminus D_N$ contains uncountably many components of $I(f)\setminus D_N$, each meeting $\partial D_N$ and containing an {\ucm} in $A_R(f)$, and each pair of these components is separated in $I(f)\setminus D_N$ by an {\ucm} in $I(f)^c$.

Since $\partial D_N\subset \partial f^N(D)\subset f^N(\partial D)$, we deduce that there is an arc $\alpha$ of $\partial D$ such that $f^N(\alpha)$ is an arc of $\partial D_N$ that contains points of uncountably many components of $I(f)\setminus D_N$ each containing an {\ucm} in $A_R(f)$, and each pair of which is separated in $\C\setminus D_N$ by an {\ucm} in $I(f)^c$ that meets $f^N(\alpha)$.

Hence, by another application of Lemma~\ref{Newman}, there are uncountably many components of $I(f)\setminus D$ each containing an {\ucm} in $f^{-N}(A_R(f))\subset A(f)\subset I(f)$, and each pair of which is separated in $\C\setminus D$ by an {\ucm} in $I(f)^c$ that meets the arc~$\alpha$. This completes the proof of Theorem~\ref{I(f)}.
\end{proof}

The analogous result Theorem~\ref{A(f)}, concerning the set $A(f)$, has a proof that is identical to the proof of Theorem~\ref{I(f)}, with $I(f)$ replaced by $A(f)$ throughout, and we omit the details.

\section{Further properties of the Eremenko points construction}
\setcounter{equation}{0}\label{furtherprops}
Theorem~\ref{Epoints} shows that in each interval of the form $(r_0,2r_0)$, where $r_0\ge R_1(f)$, we can choose~$R$ with the property that there are points in $A_R(f)$ each of whose orbits passes through a sequence of quadrilaterals $Q_{n,j_n}\subset Q_n$, $n\ge 0$, corresponding to one of the uncountably many Wiman--Valiron itineraries $(j_n)$, $j_n=\pm 1$. To prove Theorem~\ref{AR(f)} we shall require some further properties of these quadrilaterals $Q_{n,j_n}$, each of which is a univalent preimage under~$f$ of
\[
A_{n+1}=\{w:\tfrac12 r_{n+1}<|w|<2r_{n+1},|\arg(w/f(z_n))|<\pi\}.
\]
We label the `inner edges' of $Q_n$ and $Q_{n,j_n}$, which are mapped under~$f$ to $\{w:|w|=\tfrac12 r_{n+1}\}$, as $\alpha_n$ and $\alpha_{n,j_n}$, respectively.
\begin{theorem}\label{further}
Let $f$ be a {\tef}, let $(r_n)$, $(z_n)$, $(Q_n)$, $(Q_{n,j_n})$,~$z(j_n)$ and~$R$ be as in Theorem~\ref{Epoints}, and let the inner edges of $Q_n$ and $Q_{n,j_n}$ be as defined above. Then, for $n\ge 0$,
\begin{equation}\label{ineq1}
r_n<|z_n|< M^n(R)\le |f^n(z(j_n))|<2r_n,
\end{equation}
and
\begin{equation}\label{ineq2}
\alpha_n \subset \{z:|z|<|z_n|\}\subset \{z:|z|<M^n(R)\}.
\end{equation}

Also, if $G_n$ is the component of $Q_{n,j_n}\setminus A_{M^n(R)}(f)$ whose boundary contains~$\alpha_{n,j_n}$, then $G_n$ is simply connected,
\begin{equation}\label{ineq3}
Q_{n,j_n}\cap\{z:|z|<M^n(R)\} \subset G_n\;\;\text{and}\;\; f^n(z(j_n))\in \overline{G_n}.
\end{equation}
\end{theorem}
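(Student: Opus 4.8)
The plan is to extract everything from the explicit Wiman--Valiron picture set up in Theorem~\ref{Epoints}, together with the elementary monotonicity inequalities already recorded there. First I would establish \eqref{ineq1}. The inequalities $r_n<|z_n|<2r_n$ are immediate from \eqref{prop2} (indeed $\tfrac54 r_n\le|z_n|\le\tfrac74 r_n$). For the middle pair, recall from Theorem~\ref{Epoints}\,(a) that $M^{-n}(r_n)$ is strictly increasing with limit~$R$, so $M^{-n}(r_n)<R$, i.e.\ $r_n<M^n(R)$; applying $M^n$ preserves the strict inequality. On the other side, from the proof of part~(a) we have $M^{-n}(2r_n)>R$ for all~$n$ (this is exactly the right-hand chain in \eqref{Rineq}), hence $2r_n>M^n(R)$, giving $M^n(R)<2r_n$. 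Finally, since $z(j_n)\in A_R(f)$ (Theorem~\ref{Epoints}\,(b)), the very definition of $A_R(f)$ in \eqref{ARfdef} gives $|f^n(z(j_n))|\ge M^n(R)$; and $f^n(z(j_n))\in\overline{Q_{n,j_n}}\subset\overline{Q_n}\subset\overline{A(r_n,2r_n)}$ by \eqref{prop1}, so $|f^n(z(j_n))|\le 2r_n$. (If one wants strict inequality $|f^n(z(j_n))|<2r_n$, note $Q_n$ is a compact subset of the open annulus $A(r_n,2r_n)$, by \eqref{prop1} or by \eqref{Dn} and \eqref{PE3}, so the outer edge of $Q_n$ stays strictly inside $|z|<2r_n$; combining with $M^n(R)<2r_n$ already proved, \eqref{ineq1} follows.) This step is routine bookkeeping.

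Next I would prove \eqref{ineq2}. The inner edge $\alpha_n$ of $Q_n$ is, by definition, the preimage under~$f$ of the inner circle $\{w:|w|=\tfrac12 r_{n+1}\}$ of the cut annulus $A_{n+1}$, where $r_{n+1}=|f(z_n)|=M(|z_n|)$. So for $z\in\alpha_n$ we have $|f(z)|=\tfrac12 M(|z_n|)<M(|z_n|)=M(|z(r)|)$ evaluated at the relevant radius; since $z\in Q_n\subset D_n=D_{r_n',K}$ and the maximum modulus of $f$ on the circle $|z|=|z_n|$ equals $M(|z_n|)>|f(z)|$, and since inside the small disc $D_n$ the function $f$ is, by \eqref{logform}, a monotone-in-modulus perturbation of $(z/z_n)^{N}$, one deduces $|z|<|z_n|$ on $\alpha_n$. (Concretely: on $Q_n$ we have $f(z)=f(z_n)(z/z_n)^{N(|z_n|)}\exp(\eps_1)$ with $|\eps_1|\le 1/100$, so $|f(z)|<\tfrac12 r_{n+1}$ forces $|z/z_n|^{N}<\tfrac12 e^{1/100}<1$, hence $|z|<|z_n|$.) Then \eqref{ineq2} follows by combining with $|z_n|<M^n(R)$ from \eqref{ineq1}. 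The only care needed is to justify the monotone dependence of $|f|$ on $|z|$ inside $D_n$, which is immediate from the linear approximation \eqref{logform}.

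For the last assertion, let $G_n$ be the component of $Q_{n,j_n}\setminus A_{M^n(R)}(f)$ whose boundary contains $\alpha_{n,j_n}$. I would argue as follows. Since $f$ maps $Q_{n,j_n}$ univalently onto $A_{n+1}$ (by \eqref{prop3}), it suffices to understand $f(Q_{n,j_n}\cap A_{M^n(R)}(f))$; by the equivalence \eqref{equiv}, $z\in A_{M^n(R)}(f)$ iff $f(z)\in A_{M^{n+1}(R)}(f)$, so the trace of $A_{M^{n+1}(R)}(f)$ on $A_{n+1}$ is the conformal image of the trace of $A_{M^n(R)}(f)$ on $Q_{n,j_n}$. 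Now $A_{M^{n+1}(R)}(f)$ is contained in $\{w:|w|\ge M^{n+1}(R)\}$, and $M^{n+1}(R)>r_{n+1}$ by \eqref{ineq1} (applied at level $n+1$), hence the part of $A_{M^{n+1}(R)}(f)$ lying in the cut annulus $A_{n+1}$ is separated from the inner circle $\{|w|=\tfrac12 r_{n+1}\}$ by the round circle $\{|w|=M^{n+1}(R)\}$; the region of $A_{n+1}$ inside that circle is a Jordan subdomain abutting the inner edge. Pulling back univalently, the region of $Q_{n,j_n}$ on the ``inner'' side, in particular $Q_{n,j_n}\cap\{|z|<M^n(R)\}$ (which by \eqref{ineq2} lies on the $\alpha_{n,j_n}$ side, since $\alpha_{n,j_n}\subset\{|z|<|z_n|\le M^n(R)\}$... more precisely $\alpha_{n,j_n}\subset\alpha_n\subset\{|z|<|z_n|\}$), is disjoint from $A_{M^n(R)}(f)$ and connected, hence lies in one component $G_n$, and $G_n$ is simply connected because it is contained in the Jordan domain $Q_{n,j_n}$ and the removed set $A_{M^n(R)}(f)$ meets $Q_{n,j_n}$ only in a set touching the \emph{outer} part of $\partial Q_{n,j_n}$, so no bounded complementary component of $A_{M^n(R)}(f)$ can be enclosed inside $G_n$. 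Finally $f^n(z(j_n))\in\overline{Q_{n,j_n}}$ by \eqref{prop4}, and $|f^n(z(j_n))|\ge M^n(R)$; but by the above the part of $\overline{Q_{n,j_n}}$ with modulus $\ge M^n(R)$ lies in the closure of $Q_{n,j_n}\setminus A_{M^n(R)}(f)$ on the relevant side only if $f^n(z(j_n))$ is not an interior escaping point — here one uses that $f^n(z(j_n))$ is a boundary point of the nested sets $B_n$, equivalently it lies on $\partial(Q_{n,j_n}\cap A_{M^n(R)}(f))$, and that this boundary piece is exactly $\overline{G_n}\cap A_{M^n(R)}(f)$; hence $f^n(z(j_n))\in\overline{G_n}$.

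\textbf{Main obstacle.} The delicate point is the last one: showing $f^n(z(j_n))\in\overline{G_n}$ rather than in the closure of some \emph{other} component of $Q_{n,j_n}\setminus A_{M^n(R)}(f)$. The clean way to see it is to transport via~$f^n$ to level~$0$: $z(j_n)$ is the unique point of $\overline{Q_{0,j_0}}$ with its itinerary, and by part~(d) of Theorem~\ref{Epoints} the point $f^n(z(j_n))$ plays the same role at level~$n$; in particular it lies on the ``inner'' boundary of the escaping part of $Q_{n,j_n}$ because $A_{M^n(R)}(f)$ is characterised by $|f^k(\cdot)|\ge M^{n+k}(R)$ for all $k$, and relaxing just the first constraint (allowing $|w|$ slightly below $M^n(R)$) lands one in $G_n$ — so $f^n(z(j_n))$ is a limit of points of $G_n$. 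I would present this transport-to-level-$n$ argument carefully, as it is the conceptual heart of the statement; the rest is geometry of the explicit annuli and the linear model \eqref{logform}.
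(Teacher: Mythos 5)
Your treatment of \eqref{ineq1} and \eqref{ineq2} is correct and essentially the same as the paper's (your multiplicative form of the Wiman--Valiron approximation on the inner edge is fine), and your pull-back argument for the first half of \eqref{ineq3} works once you add the observation that $|z|<M^n(R)$ gives $|f(z)|\le M(|z|)<M^{n+1}(R)$, so that $Q_{n,j_n}\cap\{z:|z|<M^n(R)\}$ is contained in the \emph{connected} preimage under $f|_{Q_{n,j_n}}$ of the inner part of the cut annulus $A_{n+1}$; connectivity of $Q_{n,j_n}\cap\{z:|z|<M^n(R)\}$ itself, which you assert, is neither obvious nor needed. Also, the clean reason that $G_n$ is simply connected is that every component of $A_{M^n(R)}(f)$ is closed and unbounded, so a loop in $G_n\subset Q_{n,j_n}$ cannot enclose any point of $A_{M^n(R)}(f)$; your claim that the escaping set meets $Q_{n,j_n}$ only in a set touching the outer part of $\partial Q_{n,j_n}$ is unjustified and not required.

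The genuine gap is the final claim $f^n(z(j_n))\in\overline{G_n}$, precisely the point you identify as the heart. You assert that $f^n(z(j_n))$ lies on $\partial\bigl(Q_{n,j_n}\cap A_{M^n(R)}(f)\bigr)$, that this boundary piece equals $\overline{G_n}\cap A_{M^n(R)}(f)$, and that ``relaxing the first modulus constraint lands one in $G_n$''. None of this is justified: since $|f^n(z(j_n))|$ may exceed $M^n(R)$ strictly, nearby points of slightly smaller modulus need not leave $A_{M^n(R)}(f)$ at all; even if $f^n(z(j_n))$ is a boundary point of $A_{M^n(R)}(f)$, it could a priori be accessible only from a different component of $Q_{n,j_n}\setminus A_{M^n(R)}(f)$ (one attached to the outer or lateral edges), or from none; and the claimed identity of boundary pieces is false in general. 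The paper closes exactly this gap by a forward-iteration argument absent from your sketch: $f$ maps $G_m$ univalently onto a simply connected domain whose boundary lies in $\partial A_{m+1}\cup A_{M^{m+1}(R)}(f)$ and whose outer boundary component surrounds $\{w:|w|=M^{m+1}(R)\}$, hence surrounds $\alpha_{m+1}$ by \eqref{ineq2} at level $m+1$, so $f(\overline{G_m})\supset\overline{G_{m+1}}$ for all $m\ge n$; Lemma~\ref{toplemma} then yields a point $z'\in\overline{G_n}$ with $f^{m-n}(z')\in\overline{G_m}\subset\overline{Q_{m,j_m}}$ for all $m\ge n$, and the uniqueness statement of Theorem~\ref{Epoints}\,(d) forces $z'=f^n(z(j_n))$. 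Without an argument of this type (covering of the domains $G_m$ plus uniqueness of the itinerary point), the membership $f^n(z(j_n))\in\overline{G_n}$ does not follow from what you have written.
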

\begin{proof}
The inequalities in \eqref{ineq1} follow from the construction in Theorem~\ref{Epoints} in the same way that those in part~(a) of Theorem~\ref{Epoints} do (this is the special case when $n=0$), together with the fact (see Theorem~\ref{Epoints}~(b)) that $z(j_n)\in A_R(f)$.

To prove property \eqref{ineq2} note that, by \eqref{logform},
\begin{equation}\label{gn}
g_n(z)=\log \left(\frac{f(z)}{f(z_n)}\right) = N(|z_n|)\left(\frac{z-z_n}{z_n}\right)+\eps_1(z),\quad\text{for }z\in D_n,
\end{equation}
where the disc $D_n$ is defined by \eqref{Dn} and $|\eps_1(z)|\le 1/100$. The function $g_n$ maps $D_n$ univalently, with $g_n(z_n)=0$, and $g_n(D_n)$ contains the square
\[
\{w: |\Re w|\le 5\pi,|\Im w|\le 5\pi\},
\]
since $K=20\pi$ in Theorem~\ref{Epoints}. Also,
\[
Q_{n,j}=g_n^{-1}(\{w: |\Re w|< \log 2,|\Im w-2j\pi|< \pi\}),   \quad j=-2,-1,0,1,2.
\]
Hence, by the linear approximation \eqref{gn}, we deduce that
\[\alpha_n\subset \{z:|z|<|z_n|\}\subset \{z:|z|<M^n(R)\},\]
as required.

The domain $G_n$ is simply connected because the set $A_{M^n(R)}(f)$ has no bounded components. The first part of \eqref{ineq3} follows from \eqref{ineq1} and \eqref{ineq2}, together with the facts that $f^n(z(j_n))\in \ol{Q_{n,j_n}}$ and $A_{M^n(R)}(f)\subset \{z:|z|\ge M^n(R)\}$.

To prove the second part of \eqref{ineq3}, note first that~$f$ maps $G_n$ univalently onto a simply connected domain whose boundary is contained in $\partial A_{n+1}\cup A_{M^{n+1}(R)}(f)$, by \eqref{equiv}, and whose outer boundary component surrounds the circle $\{w:|w|=M^{n+1}(R)\}$ and so surrounds $\alpha_{n+1}$, by \eqref{ineq2}. Therefore $f(G_n)$ contains the domain $G_{n+1}$ that is the component of $Q_{n+1,j_{n+1}}\setminus A_{M^{n+1}(R)}(f)$ whose boundary contains $\alpha_{n+1,j_{n+1}}$.

Repeating this process, we obtain a sequence of domains $G_{m}\subset Q_{m,j_m}$, $m\ge n$, such that
\[
f(\overline{G_m})\supset \overline{G_{m+1}},\quad m\ge n.
\]
Hence $\overline{G_n}$ contains a point $z'$ such that
\[
f^{m-n}(z')\in \overline{G_{m}}\subset \overline{Q_{m,j_m}},\quad \text{for } m\ge n,
\]
by Lemma~\ref{toplemma}, which we use several times later in the paper. It now follows from the final statement of Theorem~\ref{Epoints} that $z'=f^n(z(j_n))$, so $f^n(z(j_n))\in \overline{G_n}$. This completes the proof of Theorem~\ref{further}.
\end{proof}

\section{Proof of Theorem~\ref{AR(f)}}
\setcounter{equation}{0}\label{AR(f)proof}
To prove Theorem~\ref{AR(f)}, we shall suppose that for every $R>R(f)$ the set $A_R(f)$ is {\it not} a {\sw} and deduce that there exists a dense set of values of $R\in (R(f),\infty)$ such that $A_R(f)$ has uncountably many components. First, recall that for all $R>R(f)$, the set $A_R(f)$ has the property that each of its components is closed and unbounded, and lies in $\{z:|z|\ge R\}$; see \cite[Theorem~1.1]{RS09}.

Now suppose that $r_0\ge R_1(f)$, where $R_1(f)$ is as in Theorem~\ref{Epoints}, and let~$R$ be given by Theorem~\ref{Epoints}~(a), so $r_0<R<2r_0$. Each of the uncountably many Eremenko points $z=z(j_n)$ found in Theorem~\ref{Epoints} lies in an unbounded closed component, $\Gamma(j_n)$ say, of $A_R(f)$. We shall show that the components $\Gamma(j_n)$ are pairwise disjoint.

Suppose that two Eremenko points~$z$ and~$z'$ have different Wiman--Valiron itineraries $(j_n)$ and $(j'_n)$, respectively. Then there exists $N\ge 0$ such that $f^N(z)$ and $f^N(z')$ lie in distinct quadrilaterals $Q_{N,j_N}$ and $Q_{N,j'_N}$, respectively. Both $f^N(z)$ and $f^N(z')$ lie in $A_{M^N(R)}(f)$, by \eqref{equiv}, and if we can show that they lie in distinct components of $A_{M^N(R)}(f)$, then it follows that~$z$ and~$z'$ lie in distinct components of $A_R(f)$.

Without loss of generality we can assume that $N=0$. We can also assume by relabelling that $z\in Q_{0,-1}\subset Q_0$ and $z'\in Q_{0,1}\subset Q_0$. For simplicity, we write $Q=Q_{0,-1}$ and $Q'=Q_{0,1}$, and denote the inner edges of~$Q$ and~$Q'$ by~$\alpha$ and~$\alpha'$, respectively. We know from the proof of Theorem~\ref{Epoints} that~$f$ maps both~$Q$ and~$Q'$ conformally onto a cut annulus of the form
\[
A=\{w:\tfrac12 M(|z_0|)<|w|<2M(|z_0|), |\arg (w/f(z_0))|<\pi\},
\]
where $z_0\in Q_{0,0}$, and $f$ maps both $\alpha$ and $\alpha'$ onto the inner boundary component of~$\overline A$.

Then let~$G$ denote the component of $Q\setminus A_R(f)$ whose boundary contains $\alpha$ and let~$G'$ denote the component of $Q'\setminus A_R(f)$ whose boundary contains~$\alpha'$. We have
\[
\alpha\cup\alpha'\subset\{z:|z|<R\},\;\; Q\cap \{z:|z|<R\}\subset G\;\;\text{and}\;\;Q'\cap \{z:|z|<R\}\subset G',
\]
by \eqref{ineq2} and \eqref{ineq3} with $n=0$. Both~$G$ and~$G'$ are simply connected, and are mapped by~$f$ conformally onto the component,~$H$ say, of $A\setminus A_{M(R)}(f)$ whose boundary contains $\{z: |z|=\tfrac12 r_1\}$, where $r_1=M(|z_0|)$.

By the final statement of Theorem~\ref{further}, in \eqref{ineq3}, we have $z\in \overline{G}$ and $z'\in \overline{G'}$. Now we take a simple path $\gamma\subset G$ for which $\overline{\gamma}\setminus \gamma$ is the union of a point, $z_{\alpha}$ say, that lies on the edge~$\alpha$ and a continuum in $\partial G$ that contains~$z$ but does not contain any open arcs of $\partial Q \setminus A_R(f)$. We can obtain such a path by, for example, using a Riemann mapping of~$G$ onto the open unit disc~$\D$. Under this mapping, the prime end of~$G$ whose impression contains~$z$ corresponds to a point $\zeta_z\in\partial \D$ and $z_{\alpha}$ corresponds to a point $\zeta_{z_{\alpha}}\in \partial\D$. Then we can take~$\gamma$ to be the preimage in~$G$ of the path in~$\D$ consisting of the two radii from~0 to $\zeta_z$ and from~0 to $\zeta_{z_{\alpha}}$; see \cite{Pomm} for the theory of prime ends and, in particular, Carath\'eodory's theorem giving the correspondence between prime ends and boundary points of the open unit disc.

Similarly, take a simple path $\gamma'\subset G'$ for which $\overline{\gamma'}\setminus \gamma'$ is the union of a point on~$\alpha'$ and a continuum in $\partial G'$ that contains~$z'$ but does not contain any open arcs of $\partial Q' \setminus A_R(f)$. Then let $\Gamma$ denote the union of the paths~$\gamma$, $\gamma'$ and the segment of the inner edge of $Q_0$ that joins the endpoints of these two paths in~$\alpha$ and~$\alpha'$.

We now suppose that~$z$ and~$z'$ lie in the same component of $A_R(f)$, say~$\Delta$, and show that this leads to a contradiction. Then
\[
z,z'\in \overline{\Gamma}\setminus \Gamma\subset A_R(f),\quad \text{so}\quad \ol{\Gamma}\setminus \Gamma\subset \Delta.
\]
The statement here that $\overline{\Gamma}\setminus \Gamma\subset A_R(f)$ is true because $A_R(f)$ is closed and $\Gamma$ does not accumulate at any points of $\partial G$ or $\partial G'$ that are outside $A_R(f)$.

We consider the bounded domain~$\Omega$ whose boundary consists of~$\Gamma$ and a subset of~$\Delta$. Since~$\Delta\subset \{z:|z|\ge R\}$, we deduce by \eqref{ineq2} and \eqref{ineq3} that at least one of the following must be the case:
\[
Q_{0,0}\cap \{z:|z|<|z_0|\}\subset \Omega\;\;\text{or}\;\; Q_{0,2}\cap \{z:|z|<|z_0|\}\subset \Omega,
\]
depending on the location of the component~$\Delta$. The two possibilities are illustrated in Figure~\ref{pic3}.
\begin{figure}[hbt!]
\begin{center}
\def\svgwidth{1.2\textwidth}

\begingroup%
  \makeatletter%
  \providecommand\color[2][]{%
    \renewcommand\color[2][]{}%
  }%
  \providecommand\transparent[1]{%

    \renewcommand\transparent[1]{}%
  }%
  \providecommand\rotatebox[2]{#2}%
  \ifx\svgwidth\undefined%
    \setlength{\unitlength}{5413.08910476bp}%
    \ifx\svgscale\undefined%
      \relax%
    \else%
      \setlength{\unitlength}{\unitlength * \real{\svgscale}}%
    \fi%
  \else%
    \setlength{\unitlength}{\svgwidth}%
  \fi%
  \global\let\svgwidth\undefined%
  \global\let\svgscale\undefined%
  \makeatother%
  \begin{picture}(1,0.50715027)%
    \put(0.05,0){\includegraphics[width=\unitlength]{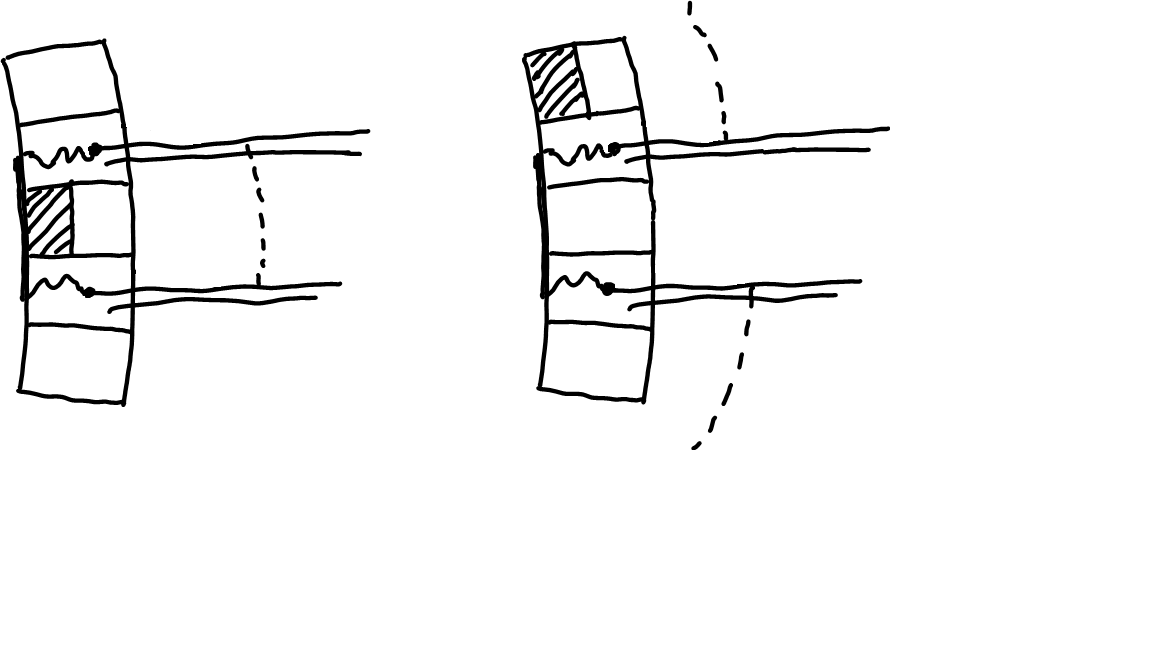}}%
    \put(0.075,0.44){\color[rgb]{0,0,0}\makebox(0,0)[lb]{\smash{$\gamma'$}}}%
    \put(0.08,0.295){\color[rgb]{0,0,0}\makebox(0,0)[lb]{\smash{$\gamma$}}}%
    \put(0.125,0.443){\color[rgb]{0,0,0}\makebox(0,0)[lb]{\smash{$z'$}}}%
    \put(0.12,0.289){\color[rgb]{0,0,0}\makebox(0,0)[lb]{\smash{$z$}}}%
    \put(0.04,0.36){\color[rgb]{0,0,0}\makebox(0,0)[lb]{\smash{$\Gamma$}}}%
    \put(0.495,0.36){\color[rgb]{0,0,0}\makebox(0,0)[lb]{\smash{$\Gamma$}}}%
    \put(0.53,0.44){\color[rgb]{0,0,0}\makebox(0,0)[lb]{\smash{$\gamma'$}}}%
    \put(0.535,0.295){\color[rgb]{0,0,0}\makebox(0,0)[lb]{\smash{$\gamma$}}}%
    \put(0.575,0.443){\color[rgb]{0,0,0}\makebox(0,0)[lb]{\smash{$z'$}}}%
    \put(0.57,0.29){\color[rgb]{0,0,0}\makebox(0,0)[lb]{\smash{$z$}}}%
    \put(0.118,0.37){\color[rgb]{0,0,0}\makebox(0,0)[lb]{\smash{$Q_{0,0}$}}}%
    \put(0.559,0.485){\color[rgb]{0,0,0}\makebox(0,0)[lb]{\smash{$Q_{0,2}$}}}%
    \put(0.2,0.37){\color[rgb]{0,0,0}\makebox(0,0)[lb]{\smash{$\Omega$}}}%
    \put(0.47,0.45){\color[rgb]{0,0,0}\makebox(0,0)[lb]{\smash{$\Omega$}}}%
    \put(0.285,0.37){\color[rgb]{0,0,0}\makebox(0,0)[lb]{\smash{$\Delta\subset A_R(f)$}}}%
    \put(0.68,0.5){\color[rgb]{0,0,0}\makebox(0,0)[lb]{\smash{$\Delta\subset A_R(f)$}}}%
    \put(0.10,0.185){\color[rgb]{0,0,0}\makebox(0,0)[lb]{\smash{$Q_0$}}}%
    \put(0.55,0.185){\color[rgb]{0,0,0}\makebox(0,0)[lb]{\smash{$Q_0$}}}%
\end{picture}%
\endgroup%
\end{center}
\vspace*{-2.5cm}
\caption{\small Two possible locations of the component $\Delta$ and the domain $\Omega$}
\label{pic3}
\end{figure}

In either case, we deduce that $f(\Omega)$ is a bounded domain that contains an open annulus of the form $A(\tfrac12r_1,\tfrac12 r_1+\eps)$, for some $\eps>0$.

Now,
\[
\partial f(\Omega)\subset f(\partial \Omega)\subset f(\Gamma)\cup f(\Delta),
\]
and we claim that $f(\Gamma)$ does not meet the outer boundary component, $C$ say, of $\partial f(\Omega)$. This is clearly the case for the part of $\Gamma$ lying in the inner edge of $Q_0$, which maps to $\{z:|z|=\tfrac12 r_1\}$, and is the case for $f(\gamma)$ and $f(\gamma')$ because each of these paths has a preimage path lying entirely in $Q_{0,0}$ or $Q_{0,2}$, and hence in~$\Omega$.

Therefore,
\[
C\subset f(\Delta)\subset A_{M(R)}(f),
\]
so $A_{M(R)}(f)$ has a bounded complementary component and hence is a {\sw}, by \cite[Theorem~1.4]{RS09}. This is a contradiction. Thus all the components of $A_R(f)$ that correspond to the uncountably many Eremenko points with distinct Wiman--Valiron itineraries are disjoint.

We have now shown that if $A_R(f)$ is {\it never} a spider's web, then for each $r_0\ge R_1(f)$ the interval $(r_0,2r_0)$ contains a value of~$R$ such that $A_R(f)$ has uncountably many components. To complete the proof of Theorem~\ref{AR(f)}, we deduce that there exists a dense set of values~$R$ in $(R(f),\infty)$ with this property. To do this we use the fact (see \cite[Lemma~2.2]{RS09b}) that there exists $R_2(f)>0$ such that
\begin{equation}\label{Mrc}
M(r^c)\ge M(r)^c \qfor r\ge R_2(f),\; c>1.
\end{equation}
Suppose that $(R',R'')$ is any non-empty open subinterval of $(R(f),\infty)$. Then, it follows from \eqref{Mrc} that there exists $N\in\N$ such that $M^N(R'')\ge 2M^N(R')\ge 2R_1(f)$, so the interval $(M^N(R'),M^N(R''))$ contains an interval of the form $(r_0,2r_0)$, where $r_0\ge R_1(f)$.  By the earlier part of the proof, we can find $R_0\in (r_0,2r_0)$ such that $A_{R_0}(f)$ has uncountably many components. Now put $R=M^{-N}(R_0)$. Then $R\in (R',R'')$ and we claim that $A_{R}(f)$ has uncountably many components. For if $\Gamma_0$ and $\Gamma_0'$ are distinct components of $A_{R_0}(f)$, and $\Gamma$ and $\Gamma'$ are components of $f^{-N}(\Gamma_0)$ and $f^{-N}(\Gamma_0')$, respectively, then $\Gamma$ and $\Gamma'$ are contained in $A_R(f)$, by \eqref{equiv}, and must be contained in distinct components of $A_R(f)$, since~$f$ is continuous. Hence $A_R(f)$ has uncountably many components. This completes the proof of Theorem~\ref{AR(f)}.

\section{Properties of \mcfc s}
\label{mcwds}\setcounter{equation}{0}
In this section we recall some known properties of multiply connected wandering domains, which are needed to prove Theorems~\ref{varied} and~\ref{mc-IJ}. In the first result, we give some basic properties, including the result of Baker that, for transcendental entire functions, multiply connected wandering domains are the only multiply connected Fatou components.
\begin{lemma}\label{basic-mcwds}
Let~$f$ be a {\tef}, let~$U$ be a multiply connected Fatou component of~$f$, let $U_n=f^n(U)$ for $n\in \N_0$, and suppose that $R>R(f)$. Then~$U$ is a bounded wandering domain and, more precisely,
\begin{itemize}
\item[(a)]
each $U_n$, $n\in\N$, is a bounded Fatou component of~$f$;
\item[(b)]
$U_{n+1}$ surrounds $U_n$ for sufficiently large $n$, and dist\,$(\partial U_n,0) \to \infty$ as $n\to\infty$;
\item[(c)]
$\overline{U_n}\subset A_R(f)$, for sufficiently large~$n$, and $A_R(f)$ is a {\sw};
\item[(d)]
all the components of $J(f)$ and hence of $A_R(f)\cap J(f)$ are bounded;
\item[(e)]
each component of $\partial U_n$, for sufficiently large~$n$, is contained in a distinct component of $A_R(f)\cap J(f)$;
\item[(f)]
$f$ has no exceptional points, that is, no points with a finite backward orbit.
\end{itemize}
\end{lemma}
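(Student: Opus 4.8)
The plan is to assemble these six properties, essentially due to Baker with refinements in \cite{RS09} and the references given there, deducing them in turn so that each feeds into the next. The starting point is Baker's theorem that a {\mconn} Fatou component~$U$ of a {\tef} is a bounded wandering domain, which already gives the opening assertion. For~(a) I argue by induction: if $U_n$ is bounded then $\ol{U_n}$ is compact, so $\ol{U_{n+1}}=f(\ol{U_n})$ is compact and $U_{n+1}=f(U_n)$ is bounded; moreover $U_{n+1}$ is again {\mconn}, since Baker showed that all forward images of a {\mconn} wandering domain are {\mconn}, so Baker's boundedness theorem applies again and the induction proceeds.

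The two statements in~(b) are the heart of the matter, and the only place where precise function theory is needed. Using Baker's estimates for the growth of $|f^n|$ on~$U$ --- which come from Wiman--Valiron type bounds closely related to Theorem~\ref{WV} --- together with the fact that each $U_n$ is {\mconn}, one shows that a {\mconn} Fatou component lying far from the origin is forced on the next iterate to ``wrap around'' its previous position, so that $U_{n+1}$ surrounds $U_n$ once $n$ is large; the same estimates give $\mathrm{dist}(\partial U_n,0)\to\infty$. For use below I record the standard reformulation: for $n$ large the ``filled'' domains $\wt{U_n}$ (that is, $U_n$ with all its bounded complementary components adjoined) form a nested sequence of bounded {\sconn} domains with $\partial\wt{U_n}\subset\partial U_n\subset J(f)$ and $\bigcup_n\wt{U_n}=\C$, the origin lies in a bounded complementary component of $U_n$, and consequently $U_n\subset\{z:|z|\ge d_n\}$ for some sequence $d_n\to\infty$.

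Part~(c) follows from the same circle of estimates: once $n$ is large, points of $U_n$ escape at least as fast as the iterates $M^n(R)$ for a suitable $R>R(f)$, so $\ol{U_n}\subset A_R(f)$ eventually; the nested curves $\partial\wt{U_n}\subset A_R(f)$, which surround the origin and tend to~$\infty$, then exhibit $A_R(f)$ as an infinite {\sw}, as in \cite[Theorem~1.5]{RS09}. For~(d), a component of $J(f)$ cannot meet $F(f)\supset\bigcup_m U_m$, and a connected set avoiding every $U_m$ cannot run from a fixed bounded region out to~$\infty$, since for large~$m$ it would have to cross the ``annular'' components $U_m$; hence every component of $J(f)$, and so every component of $A_R(f)\cap J(f)$, is trapped between two consecutive curves $\partial\wt{U_m}$ and is bounded --- the case of \cite[Theorem~1.3]{RS09} complementary to the one recalled in the introduction. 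For~(e), $\partial U_n\subset\ol{U_n}\cap J(f)\subset A_R(f)\cap J(f)$ by~(c), and two distinct boundary components of $U_n$ are separated from one another by $U_n$ itself, which lies between them, and by $U_{n-1}$ or $U_{n+1}$ on the outside, so no connected subset of $J(f)$ can meet both; hence distinct boundary components of $U_n$ lie in distinct components of $A_R(f)\cap J(f)$.

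Finally, for~(f) I argue by contradiction. An exceptional point~$a$ of a {\tef} is necessarily an omitted value --- if $f^{-1}(a)$ were non-empty and finite then, factoring $f-a$ as a polynomial times an exponential, one finds that $f-b$ has infinitely many zeros for $b\in f^{-1}(a)$, so the backward orbit of~$a$ would be infinite. By Iversen's theorem the omitted value~$a$ is an asymptotic value of~$f$, so there is a path $\gamma$ tending to~$\infty$ along which $f\to a$. Since the $\wt{U_n}$ are nested, bounded and exhaust~$\C$, the path $\gamma$ must leave each $\wt{U_n}$, passing in doing so through the part of $U_n$ adjacent to $\partial\wt{U_n}$ from inside; choose $\zeta_n\in\gamma\cap U_n$. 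Then $|\zeta_n|\ge d_n\to\infty$, so $\zeta_n\to\infty$ along $\gamma$ and hence $f(\zeta_n)\to a$; but $f(\zeta_n)\in f(U_n)\subset U_{n+1}\subset\{z:|z|\ge d_{n+1}\}$, so $|f(\zeta_n)|\to\infty$, a contradiction. Thus $f$ has no exceptional points. The step I expect to be the real obstacle is the ``surrounding'' assertion in~(b): everything else is essentially bookkeeping with the structure it provides, whereas that assertion rests on delicate maximum-modulus growth estimates for $f$ on $U$ and is the part one must be most careful to import correctly from the theory of {\mconn} wandering domains.
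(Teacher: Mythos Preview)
Your approach matches the paper's: parts~(a)--(e) are assembled from the literature (the paper simply cites \cite[Theorem~3.1]{iB84} for~(a),~(b) and \cite[Theorem~4.4, Corollary~6.1, Theorem~1.3]{RS09} for~(c),~(d),~(e)), and part~(f) is argued directly by showing that an exceptional value would be an asymptotic value, which is incompatible with the geometry in~(b). Your sketches for~(a)--(e) are in the right spirit and more detailed than the paper's bare citations.

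There is, however, a genuine gap in your argument for~(f). You claim that an exceptional point~$a$ of a \tef\ is necessarily an \emph{omitted} value, arguing that if $f^{-1}(a)$ were non-empty and finite then any $b\in f^{-1}(a)$ would have infinite preimage. This fails precisely when $b=a$: if $f^{-1}(a)=\{a\}$, so that~$a$ is a fixed point and the entire backward orbit of~$a$ is $\{a\}$, your Picard-type argument does not apply (you cannot conclude $f-a$ has infinitely many zeros, since that is exactly what you are assuming is false). The function $f(z)=ze^{z}$ has $0$ as an exceptional point of exactly this kind. So the dichotomy is not ``omitted versus infinite backward orbit'' but rather ``omitted or $f^{-1}(a)=\{a\}$''.

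The paper handles both cases at once by invoking the standard factorisation: if~$\alpha$ is exceptional then $f(z)=\alpha+(z-\alpha)^m e^{g(z)}$ for some $m\ge 0$ and entire~$g$ (see \cite[Section~2.2]{wB93}), and since $(z-\alpha)^m e^{g(z)}$ is transcendental entire and omits~$0$, Iversen's theorem gives a path to~$\infty$ on which it tends to~$0$, so~$\alpha$ is asymptotic for~$f$. Your contradiction with~(b) then goes through unchanged. The fix is small, but without it your proof of~(f) is incomplete.
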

See \cite[Theorem~3.1]{iB84} for properties~(a) and~(b), and \cite[Theorem~4.4, Corollary~6.1 and Theorem~1.3]{RS09} for properties~(c),~(d) and~(e). Property~(f) holds because if~$f$ has an exceptional point $\alpha$, then~$f$ has the form $f(z)=\alpha+(z-\alpha)^m \exp(g(z))$, where $m\ge 0$ and~$g$ is entire \cite[Section~2.2]{wB93}, and it follows that~$f$ has asymptotic value~$\alpha$, which is impossible by property~(b).

In order to define the notion of inner connectivity, which is used in the statement of Theorem~\ref{varied}, we need the following result \cite[Theorem~1.3]{BRS11}. This strengthens an earlier result of Zheng \cite{jhZ02} showing that \mcfc s contain large annuli.
\begin{lemma}\label{large-annuli}
Let $f$ be a {\tef} with a {\mcfc}~$U$, let $z_0\in U$ and put $r_n=|f^n(z_0)|$ and $U_n=f^n(U)$ for $n\in\N_0$. Then there exist $\alpha>0$ and sequences $(a_n)$ and $(b_n)$ with
\[
0<a_n<1-\alpha<1+\alpha<b_n, \qfor n\in\N_0,
\]
such that, for sufficiently large $n \in \N$,
\[
B_n=A(r_n^{a_n}, r_n^{b_n})\subset U_n.
\]
Moreover, for every compact subset $C$ of $U$, we have $f^n(C)\subset B_n$ for $n\ge N(C)$.
\end{lemma}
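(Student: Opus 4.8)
The plan is to prove the apparently stronger statement that there is a fixed $\alpha>0$ with $U_n\supset A(r_n^{1-\alpha},r_n^{1+\alpha})$ for all sufficiently large $n$; the sequences $(a_n),(b_n)$ are then obtained by taking, for each large $n$, $A(r_n^{a_n},r_n^{b_n})$ to be a round annulus about the origin contained in $U_n$ that is maximal with respect to inclusion, so that automatically $a_n\le 1-\alpha$ and $b_n\ge 1+\alpha$ (shrink $\alpha$ slightly for strict inequalities), while for the finitely many small $n$ one picks $a_n,b_n$ subject only to the required inequalities. First I would set up the reductions: by Lemma~\ref{basic-mcwds} there is $N_0$ such that, for $n\ge N_0$, the domain $U_n$ is bounded, surrounds $0$, is surrounded by $U_{n+1}$, and ${\rm dist}(0,\partial U_n)\to\infty$. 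Hence $0\notin U_{n+1}=f(U_n)$, so $f$ is zero-free on $U_n$, and
\[
{\rm dist}(0,U_{n+1})\le|f(z)|\le\sup_{w\in U_{n+1}}|w|\qfor z\in U_n .
\]
Write $w_n=f^n(z_0)\in U_n$, so $|w_n|=r_n\to\infty$ and $f(w_n)=w_{n+1}$.

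It is known, by a theorem of Zheng~\cite{jhZ02} which the present lemma refines, that for large $n$ the domain $U_n$ contains a round annulus $A(\rho_n,\sigma_n)$ about $0$ whose modulus tends to infinity. The first key point is that on such an annulus $f$ is close to a power of $z$: being zero-free there with bounded image, $f$ may be written as $f(z)=z^{k_n}e^{h_n(z)}$ with $h_n$ holomorphic and $k_n\ge 1$ the winding number of $f$ about $0$ over the annulus (this is positive for large $n$ since $\rho_n\to\infty$ and $f$ has a zero; if $f$ is zero-free only minor changes are needed). The displayed two-sided bound forces $\Re h_n$ to oscillate by at most $\log\!\big(\sup_{U_{n+1}}|w|/{\rm dist}(0,U_{n+1})\big)$ on the annulus, and a standard gradient estimate for harmonic functions then bounds the oscillation of $h_n$ itself on the middle sub-annulus $A\big(\rho_n^{2/3}\sigma_n^{1/3},\,\rho_n^{1/3}\sigma_n^{2/3}\big)$; thus on this sub-annulus $f(z)\approx c_n z^{k_n}$ up to controlled multiplicative error, so $f$ maps it onto a set containing a round annulus about $0$ of modulus tending to infinity and with bounding radii comparable (in logarithmic scale) to the $k_n$-th powers of those of the sub-annulus. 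This produces round annuli in $U_{n+1}$ and feeds an induction on $n$.

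The crux is to run this induction while keeping control of the scale, that is, of the exponents $\log\rho_n/\log r_n$ and $\log\sigma_n/\log r_n$ together with the position of $\log r_n$ relative to $(\log\rho_n,\log\sigma_n)$ and the relation $r_{n+1}=|f(w_n)|$. Step~2 expresses the new exponents, up to controlled errors, in terms of $k_n$ and the old ones, and relates $r_{n+1}$ to $r_n$ and $k_n$ when $w_n$ lies in the good middle sub-annulus; combining this with the submultiplicative estimate~\eqref{Mrc}, namely $M(r^c)\ge M(r)^c$ for large $r$ and $c>1$, which controls how fast the scales $r_n$ can grow, should force the exponents to remain bounded above, respectively below, away from $1$, uniformly in $n$, giving the required $\alpha$. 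Making this precise — organising the bookkeeping so that it is insensitive to whether $w_n$ lies inside, on, or outside the current annulus, and ensuring that the multiplicative errors from the harmonic estimate do not accumulate over infinitely many steps — is the delicate part and is where I expect the main difficulty to lie.

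Finally, for the last assertion, let $C$ be a compact subset of $U$; enlarging it we may assume $z_0\in C$, so $f^n(C)$ is a connected subset of $U_n$ containing $w_n$, which lies in $B_n=A(r_n^{a_n},r_n^{b_n})$ since $a_n<1<b_n$. One then checks that $f^n(C)\subset B_n$ for all large $n$: this follows from the localisation obtained above together with the known fact that, for multiply connected wandering domains, $f^n(C)$ is eventually contained in an annulus of the form $A(r_n^{1-\eps_n},r_n^{1+\eps_n})$ with $\eps_n\to 0$, which is contained in $B_n$ once $n$ is large because $a_n<1-\alpha$ and $1+\alpha<b_n$. We omit the routine details of this last step.
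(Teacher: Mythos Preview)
The paper does not give its own proof of this lemma: it is quoted verbatim as \cite[Theorem~1.3]{BRS11} and used as a black box. So there is no ``paper's proof'' to compare against here; the relevant comparison is with the argument in \cite{BRS11}.

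Your outline is broadly in the right spirit. The proof in \cite{BRS11} does hinge on the fact that $f$ is zero-free on $U_n$ for large $n$, so that one can take a branch of $\log f$ there, and on a harmonic-function estimate showing that $\log|f(z)|/\log|z|$ is nearly constant on large sub-annuli of $U_n$; this is the mechanism behind your ``$f(z)\approx c_n z^{k_n}$'' step. However, two points deserve comment. First, you yourself flag the inductive bookkeeping---keeping the exponents $a_n,b_n$ uniformly separated from $1$ as $n$ grows---as the delicate part and do not carry it out; in \cite{BRS11} this is indeed the substance of the proof, occupying several pages and relying on hyperbolic-metric distortion estimates rather than the convexity inequality~\eqref{Mrc} that you invoke. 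It is not clear that \eqref{Mrc} alone suffices to close the induction. Second, your treatment of the absorbing property is circular: the ``known fact'' that $f^n(C)\subset A(r_n^{1-\eps_n},r_n^{1+\eps_n})$ with $\eps_n\to 0$ is itself part of Theorem~1.3 of \cite{BRS11} (indeed it is essentially Theorem~1.2 there, proved jointly with the annulus statement), so it cannot be invoked as an independent input.
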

In view of this last property we often describe the large annuli $B_n$ as `absorbing'.

We then define the {\it inner connectivity} of $U_n$ to be the connectivity of the domain $U_n\cap\{z: |z|<r_n\}$ and the {\it outer connectivity} of $U_n$ to be the connectivity of the domain $U_n\cap\{z: |z|>r_n\}$. We also define the {\it outer boundary component} of a bounded domain~$U$ to be the boundary of the unbounded component of $\C \setminus U$, denoted by $\partial_{\rm out}U$, and the {\it inner boundary component} of~$U$ to be the boundary of the component of $\C \setminus U$ that contains~0, if there is one, denoted by $\partial_{\rm inn}U$.

The inner connectivity of a \mcfc\ can behave in one of two ways, given by the following lemma; see \cite[Theorem~8.1(b)]{BRS11}.
\begin{lemma}\label{event-conn}
Let~$f$ be a \tef, let~$U$ be a \mcfc\ of~$f,$ and let $U_n=f^n(U)$ for $n\in \N_0$. Then there exists $N\in\N$ such that exactly one of the following holds:
\begin{itemize}
\item[(a)]
$U_n$ has infinite inner connectivity for all $n\ge N$;
\item[(b)]
$U_n$ has finite inner connectivity for all $n\ge N,$ which decreases with~$n$, eventually reaching the value~2.
\end{itemize}
\end{lemma}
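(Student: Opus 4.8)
The plan is to turn inner connectivity into a combinatorial quantity (the number of ``inner holes''), prove it is non-increasing, and then identify its eventual value. Fix $z_0\in U$ and, with the notation of Lemma~\ref{large-annuli}, write $r_n=|f^n(z_0)|$ and $U_n=f^n(U)$. Using Lemmas~\ref{basic-mcwds} and~\ref{large-annuli} I would first choose $N\in\N$ so large that for all $n\ge N$: the origin lies in a bounded complementary component $P_n$ of $U_n$; the absorbing annulus $B_n=A(r_n^{a_n},r_n^{b_n})$ is contained in $U_n$ and, since $r_n>1$ and $a_n<1<b_n$, contains the circle $\{z:|z|=r_n\}$ together with all points of modulus close to $r_n$; the restriction $f\colon U_n\to U_{n+1}$ is a proper map of finite degree; and the growth of $(r_n)$ is fast enough that $M(r_n^{a_n})<r_{n+1}^{b_{n+1}}$. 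Since $B_n$ fills the whole round annulus $A(r_n^{a_n},r_n^{b_n})$, the complement $\C\setminus U_n$ is the disjoint union of $\{z\notin U_n:|z|\le r_n^{a_n}\}$ and $\{z\notin U_n:|z|\ge r_n^{b_n}\}$, two sets lying at positive distance apart; hence each bounded complementary component of $U_n$ lies wholly in one of them, and I call these \emph{inner holes} and \emph{outer holes} accordingly. A short topological argument (any other component would be relatively clopen in the domain $U_n$) shows that $U_n\cap\{z:|z|<r_n\}$ is connected, and its connectivity equals $\iota_n:=1+(\text{number of inner holes of }U_n)\in\{2,3,\ldots\}\cup\{\infty\}$; the value $2$ occurs exactly when $P_n$ is the only inner hole.

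The heart of the matter is the monotonicity $\iota_{n+1}\le\iota_n$ for $n\ge N$. For this I would show that $f$ sends inner boundary components of $U_n$ to inner boundary components of $U_{n+1}$, and outer to outer: a boundary component of $U_n$ bounding an inner hole lies in $\{z:|z|\le r_n^{a_n}\}$, so its $f$-image lies in $\{z:|z|\le M(r_n^{a_n})\}\subset\{z:|z|<r_{n+1}^{b_{n+1}}\}$; this image is a boundary component of $U_{n+1}$, hence inner or outer, and the modulus bound rules out ``outer''. Since $f\colon U_n\to U_{n+1}$ is proper it maps $\partial U_n$ onto $\partial U_{n+1}$, so every inner boundary component of $U_{n+1}$ is the image of some boundary component of $U_n$, which by the above must be inner. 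Via the bijection between inner holes and inner boundary components, the number of inner holes cannot increase, that is, $\iota_{n+1}\le\iota_n$.

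Consequently $(\iota_n)_{n\ge N}$ is non-increasing with values in $\{2,3,\ldots\}\cup\{\infty\}$, so either $\iota_n=\infty$ for all $n\ge N$, which is alternative~(a), or (after possibly enlarging $N$) $\iota_n$ is finite for all $n\ge N$ and, being a non-increasing sequence of integers at least $2$, is eventually equal to a constant $\iota\ge 2$; it then remains to prove $\iota=2$. Here I would pass to an adapted ``inner part'' $U_n^{\mathrm{in}}$ of $U_n$, bounded outside by a Jordan curve $\gamma_n$ chosen within $B_n$ using the absorbing-annulus structure so that $f$ restricts to a proper map $U_n^{\mathrm{in}}\to U_{n+1}^{\mathrm{in}}$; its degree $d_n$ equals the winding number about $0$ of $f|_{\gamma_n}$, that is, the number of zeros of $f$ inside $\gamma_n$ counted with multiplicity. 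The Riemann--Hurwitz formula gives $2-\iota=d_n(2-\iota)-b_n$ with $b_n\ge0$ the branching, so $(d_n-1)(\iota-2)=-b_n\le0$; if $\iota\ge3$ this forces $d_n=1$ and $b_n=0$. But for large $n$ the component $P_n\ni 0$ swells to absorb every zero of $f$, since $\mathrm{dist}(\partial U_n,0)\to\infty$ by Lemma~\ref{basic-mcwds}(b), so $d_n\ge2$ as soon as $f$ has two zeros counted with multiplicity; since $f$ has no exceptional points (Lemma~\ref{basic-mcwds}(f)) it is not of the form $e^g$, which leaves only the degenerate possibility of a single simple zero. Barring that case, the contradiction yields $\iota=2$.

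The step I expect to be the main obstacle is precisely this last one: showing that the stable value is exactly $2$ rather than some constant $\iota\ge 2$, and in particular disposing of transcendental entire functions with very few zeros. This needs genuinely quantitative input --- the rate at which $r_n\to\infty$, the location of $P_n$ relative to the zeros of $f$, and the winding of $f$ along the core curves of the $B_n$ --- and cannot be read off from the combinatorics alone. A secondary technical burden is justifying the properness of $f\colon U_n\to U_{n+1}$ and the ``inner maps to inner'' property for boundary components, as well as the construction of the adapted domains $U_n^{\mathrm{in}}$; all of these rest on the detailed geometry of {\mcfc}s provided by Lemma~\ref{large-annuli}.
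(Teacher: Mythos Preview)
The paper does not prove this lemma; it is quoted without argument from \cite[Theorem~8.1(b)]{BRS11}, so there is no in-paper proof to compare against. Your overall strategy --- reduce inner connectivity to a count of inner holes, show this count is non-increasing, and then use Riemann--Hurwitz on an adapted inner domain to pin down the stable value --- is the natural one and is in the spirit of the cited reference. Two of your steps, however, contain genuine gaps.

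In the monotonicity paragraph you assert ``and outer to outer'' but argue only the inner-to-inner direction, and then tacitly use the former in ``which by the above must be inner''. Your modulus estimate has no analogue for outer holes: an outer hole lies in $\{|z|\ge r_n^{b_n}\}$, and nothing prevents an entire function from mapping such a set into $\{|z|\le r_{n+1}^{a_{n+1}}\}$. A cleaner repair avoids ``outer to outer'' altogether. Take $\gamma_n$ a circle in the nested annulus $B'_n$ of Lemma~\ref{nested} and let $D_n$ be the closed disc it bounds; then $f(\gamma_n)\subset B'_{n+1}$ winds at least once round $0$ (argument principle, using that $f$ has a zero in $D_n$ for large $n$), so $f(D_n)$ covers all of $\{|z|\le r'_{n+1}\}$. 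Hence every inner hole $L$ of $U_{n+1}$ is contained in $f(D_n)$; a preimage in $D_n$ of any point of $L$ must lie in a hole of $U_n$, necessarily an inner one, and that hole maps onto $L$ by Lemma~\ref{types}(a). This gives $\iota_{n+1}\le\iota_n$ directly.

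For the stable value, the single-simple-zero case cannot be brushed aside: Lemma~\ref{basic-mcwds}(f) rules out $f=e^g$ but says nothing about $f(z)=(z-a)e^{g(z)}$, and nothing you invoke excludes such $f$ from having a \mcfc. Your own ingredients do close the gap, though. If $f$ has exactly one zero $a$, then for large $n$ we have $a\in P_n$ (since $\mathrm{dist}(\partial U_n,0)\to\infty$), so $0=f(a)\in f(P_n)$ and hence $f(P_n)=P_{n+1}$. Suppose now that $\iota\ge 3$ is finite and stable. By Lemma~\ref{types}(c), any inner hole of type~2 or~3 is accumulated by infinitely many distinct type~1 complementary components, which, lying near an inner hole, are themselves inner --- contradicting $\iota<\infty$. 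So every inner hole $K\ne P_n$ is of type~1, and some iterate $f^m(K)$ equals $P_{n+m}$. But with $d_n=1$ the map $f$ induces a bijection on inner holes sending $P_n$ to $P_{n+1}$, so no other inner hole can ever reach the $P$-orbit: contradiction. Hence $\iota=2$. Your zero-counting argument for $d_n\ge2$ is therefore not sufficient on its own, but combined with Lemma~\ref{types} it can be completed.
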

{\it Remark}\;\;It is clear from this lemma that the concept of {\it eventual inner connectivity} of a \mcfc\ (see \cite{BRS11}) is well defined, and that the eventual inner connectivity can take the values infinity or~2.

To prove Theorems~\ref{varied} and~\ref{mc-IJ}, we use another property of {\mcfc}s, proved as part of \cite[Lemma~3.2]{RS15}.
\begin{lemma}\label{nested}
Let~$f$ be a \tef, let $U$ be a \mcfc\ of $f,$ and let $U_n=f^n(U)$ for $n\in \N_0$. Then there exists $N\in\N$ and a sequence of annuli $B'_n=A(r'_n,r''_n) \subset U_n$, for $n\ge N$, such that
\begin{equation}\label{B'n}
f(B'_n)\subset B'_{n+1}, \qfor n\ge N.
\end{equation}
In particular, if $R\in (r'_N,r''_N)$, then
\begin{equation}\label{Rn}
\{z:|z|=M^{n-N}(R)\}\subset U_n, \qfor n\ge N.
\end{equation}
\end{lemma}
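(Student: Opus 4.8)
The plan is to obtain the annuli $B'_n$ as suitable sub-annuli of the ``absorbing'' annuli $B_n=A(r_n^{a_n},r_n^{b_n})\subset U_n$ supplied by Lemma~\ref{large-annuli}, for $n$ large. Write $m(r,f)=\min_{|z|=r}|f(z)|$. The mechanism is elementary: once $f$ is zero-free on a closed round annulus, the maximum modulus principle and the minimum modulus principle (i.e.\ the maximum modulus principle for $1/f$) force the image of that annulus into the round annulus with inner radius $\min\{m(r',f),m(r'',f)\}$ and outer radius $M(r'',f)$. So I would first fix $N$ large enough that, for $n\ge N$, the annulus $B_n$ is defined; since the $U_n$ are pairwise disjoint ($U$ being wandering), so that $0$ lies in at most one of them, and since $\mathrm{dist}(\partial U_n,0)\to\infty$ by Lemma~\ref{basic-mcwds}(b), we may also assume $0\notin\overline{U_n}$ for $n\ge N$; then, as $f(\partial U_n)\subset\partial U_{n+1}$ and hence $f(\overline{U_n})\subset\overline{U_{n+1}}$, the function $f$ is zero-free on $\overline{U_n}$ for $n\ge N$. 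Next I would fix radii $r_N^{a_N}<r'_N<r''_N<r_N^{b_N}$, with $r''_N$ chosen not too large (see below), and define recursively, for $n\ge N$,
\[
r''_{n+1}=M(r''_n,f),\qquad r'_{n+1}=\min\{m(r'_n,f),\,m(r''_n,f)\}.
\]

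Granting that $\overline{B'_n}:=\overline{A(r'_n,r''_n)}\subset B_n\subset U_n$ for every $n\ge N$ (the crux; see the next paragraph), the nesting follows at once: $f$ is zero-free on $\overline{B'_n}$, so by the maximum modulus principle $|f(z)|<M(r''_n,f)=r''_{n+1}$ for $z\in B'_n$, while the minimum modulus principle gives $|f(z)|>\min\{m(r'_n,f),m(r''_n,f)\}=r'_{n+1}$ for $z\in B'_n$; hence $f(B'_n)\subset A(r'_{n+1},r''_{n+1})=B'_{n+1}$. The ``in particular'' clause is then immediate: if $R\in(r'_N,r''_N)$, an easy induction shows $M^{n-N}(R)\in(r'_n,r''_n)$ for all $n\ge N$ --- in the inductive step, $M$ is strictly increasing, so $M^{n-N+1}(R)=M(M^{n-N}(R))<M(r''_n,f)=r''_{n+1}$ and $M^{n-N+1}(R)>M(r'_n,f)\ge m(r'_n,f)\ge r'_{n+1}$ --- and therefore $\{z:|z|=M^{n-N}(R)\}\subset A(r'_n,r''_n)=B'_n\subset U_n$.

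The main obstacle is verifying that $r_n^{a_n}<r'_n$ and $r''_n<r_n^{b_n}$ for all $n\ge N$, that is, that the recursively defined radii never escape the absorbing annuli. For the inner estimate one uses the quantitative regularity of $f$ on the huge annuli $B_n$ that underlies Lemma~\ref{large-annuli} (results of Baker, Zheng and Bergweiler--Rippon--Stallard): on $B_n$, $f$ is a power of $z$ times a factor bounded away from $0$ and~$\infty$, so $\log m(r,f)=(1+o(1))\log M(r,f)$ uniformly for $r$ in the absorbing range, which keeps $r'_{n+1}$ comfortably above $r_{n+1}^{a_{n+1}}$. The genuinely delicate point is the outer estimate: since $M$ grows faster than any power, one cannot take $B'_n$ of the fixed-power form $A(r_n^{1-\beta},r_n^{1+\beta})$ --- the outer radius really has to be iterated through $M$ --- so one must know that the outer reach of $U_n$ is comparable, on a logarithmic scale, to $M$ applied to the outer reach of $U_{n-1}$ (again a standard feature of multiply connected wandering domains, since $U_{n+1}=f(U_n)$ and $f$ acts almost like a monomial on the core annulus of $U_n$), and then choose $N$ large and $r''_N$ sufficiently small so that the inequality $M^{n-N}(r''_N)<r_n^{b_n}$ propagates inductively; the inequality $M(r^c)\ge M(r)^c$ from \eqref{Mrc} is the handy tool for estimating these iterated powers of~$M$.
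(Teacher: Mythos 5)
The paper does not prove this lemma itself (it quotes it from \cite[Lemma~3.2]{RS15}), so your argument has to be judged on its own. Its local mechanics are fine: $f$ is zero-free on $\overline{U_n}$ for large $n$, the maximum and minimum modulus principles do give $f(A(r',r''))\subset A\bigl(\min\{m(r'),m(r'')\},M(r'')\bigr)$, and the deduction of \eqref{Rn} from \eqref{B'n} is correct once the annuli exist. But there is a genuine gap at exactly the point you flag as delicate: you never prove that the recursively defined radii satisfy $\overline{A(r'_n,r''_n)}\subset U_n$ for all $n\ge N$, and that containment is the entire content of the lemma. The tools you invoke cannot deliver it. Lemma~\ref{large-annuli} only guarantees $a_n<1-\alpha<1+\alpha<b_n$; it gives no lower bound for $b_{n+1}\log r_{n+1}$ in terms of $\log M(r''_n)$, so the inductive inequality $M^{n-N}(r''_N)<r_n^{b_n}$ you propose to ``propagate'' is not available from anything quoted. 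Moreover \eqref{Mrc} points the wrong way: it gives the lower bound $M(r^c)\ge M(r)^c$, whereas your outer step needs an \emph{upper} bound for $M(r''_n)$ by a bounded power of $M(r_n)$ (equivalently of $r_{n+1}$), and no such bound holds for general entire functions, since $M(r^{1+\delta})$ can exceed every fixed power of $M(r)$. A similar drift problem occurs at the inner radius, where you need $m(r'_n)$, hence essentially $M(r'_n)$, to be at least a power of $r_{n+1}$ close to $1$, which again requires a two-sided comparison of $M$ at nearby exponents that is false in general.

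What actually closes the gap --- and why the paper simply cites \cite{RS15} --- is the finer mapping information behind Lemma~\ref{large-annuli} from \cite{BRS11}: on a slightly shrunk copy of $B_n$ the function behaves like a monomial, so $\log|f(z)|\ge(1-\varepsilon_n)\log M(|z|)$, and, since $f$ has nonzero winding number on core curves of $U_n$, an argument-principle covering step shows that the image of a round sub-annulus $A(s,t)\subset U_n$ \emph{contains} the round annulus $A(M(s),m(t))$; only this tells you that $U_{n+1}$ contains full circles out to radii logarithmically comparable with $M(t)$, which is what your recursion needs in order to place $B'_{n+1}$ inside $U_{n+1}$ and inside the region where the estimates apply at the next step, and even then the accumulating errors $\varepsilon_n$ have to be managed. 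Your appeal to the ``standard feature'' that the outer reach of $U_{n+1}$ is comparable to $M$ of the outer reach of $U_n$ conflates two different statements: the outer reach of \emph{points} of $U_{n+1}$ (easy, by the maximum principle) and the outer reach of full round \emph{circles} contained in $U_{n+1}$ (what is actually needed). As written, the key induction is asserted rather than proved, so the argument is incomplete.
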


The final result in this section describes the three possible types of complementary components that a \mcfc\ can have. We discuss the possible existence of these types of components in the next section.
\begin{lemma}\label{types}
Let~$f$ be a \tef\ with a \mcfc\ $U$ and let $K$ be a bounded complementary component of~$U$.
\begin{itemize}
\item[(a)] For all $n\in\N$, the set $f^n(K)$ is a bounded complementary component of $f^n(U)$.
\item[(b)]The component~$K$ is of one of the following types:
\begin{itemize}
\item[1.]
the interior of~$K$ meets $J(f)$ and, for sufficiently large~$n\in\N$, the set $f^n(K)$ is the complementary component of $f^n(U)$ that contains~0;
\item[2.]
the interior of~$K$ is a union of Fatou components;
\item[3.]
$K$ has empty interior.
\end{itemize}
\item[(c)] If~$K$ is of type~2 or type~3, then every point of~$\partial K$ is the limit of points lying in distinct type~1 complementary components.
\end{itemize}
\end{lemma}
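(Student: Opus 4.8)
The plan is to prove the three parts in turn, with part~(a) providing the topological foundation for parts~(b) and~(c). For part~(a) I would induct on~$n$ and so reduce to the case $n=1$; write $U_1=f(U)$. Since $U$ and $U_1$ are bounded (Lemma~\ref{basic-mcwds}), $\ol U$ is compact; as $\partial U\subset J(f)$ we get $f(\partial U)\subset J(f)$, which is disjoint from $U_1\subset F(f)$, and combined with $f(\ol U)=\ol{U_1}$ this forces $f(\partial U)=\partial U_1$ and shows that $f|_U\colon U\to U_1$ is proper. Now let $K$ be a bounded complementary component of~$U$. Then $\partial K\subset\partial U$, and since a bounded complementary component of a domain has connected complement in~$\widehat\C$, the set $\partial K$ is connected; hence $f(\partial K)$ is a connected subset of $\partial U_1$ lying in a single complementary component $K_1$ of~$U_1$. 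Choosing, by Lemma~\ref{Newman}\,(b), a Jordan curve $\gamma\subset U$ that separates~$K$ from the unbounded complementary component of~$U$, a winding-number argument then shows $f(K)=K_1$; since $K$ is compact, $K_1$ is bounded.

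For part~(b) I would split according to $\mathrm{int}\,K$. If $\mathrm{int}\,K=\emptyset$, then $K$ is of type~3. If $\mathrm{int}\,K\neq\emptyset$ and $\mathrm{int}\,K\cap J(f)=\emptyset$, then $\mathrm{int}\,K$ is a nonempty open subset of $F(f)$, and any Fatou component meeting it is disjoint from $\partial K\subset J(f)$, hence --- being connected and contained in $\C\setminus\partial K$, the disjoint union of the open sets $\mathrm{int}\,K$ and $\C\setminus K$ --- lies inside $\mathrm{int}\,K$; thus $\mathrm{int}\,K$ is a union of Fatou components and $K$ is of type~2. Finally, if $\mathrm{int}\,K\cap J(f)\neq\emptyset$, choose $\zeta_0$ in this set; since $f$ has no exceptional points (Lemma~\ref{basic-mcwds}\,(f)), the blowing up property of $J(f)$ applied to the neighbourhood $\mathrm{int}\,K$ of~$\zeta_0$ gives $0\in f^n(\mathrm{int}\,K)\subset f^n(K)$ for all large~$n$. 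By part~(a), $f^n(K)$ is then a bounded complementary component of~$U_n$ containing~$0$, and since $\mathrm{dist}(\partial U_n,0)\to\infty$ (Lemma~\ref{basic-mcwds}\,(b)), for large~$n$ it must be the complementary component of~$U_n$ containing~$0$; so $K$ is of type~1. In particular, $K$ is of type~1 precisely when $\mathrm{int}\,K\cap J(f)\neq\emptyset$.

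For part~(c), let $K$ be of type~2 or~3 and let $\zeta\in\partial K\subset J(f)$. Pick $w_0\in J(f)$ that is not a critical value of any iterate~$f^m$ (this removes only countably many points from the uncountable set $J(f)$); since $f$ has no exceptional points, the backward orbit $\bigcup_{m\ge1}f^{-m}(w_0)$ is dense in $J(f)$, and because each $f^{-m}(w_0)$ is discrete, the accumulation of this backward orbit at~$\zeta$ is due to large values of~$m$. Fix~$N$ so large that $w_0$ lies in $\{z:|z|<\mathrm{dist}(\partial U_m,0)\}$, and hence in the interior of the complementary component $L_m$ of~$U_m$ containing~$0$, for all $m\ge N$. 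Then one may choose distinct points $z'_k\to\zeta$ with $f^{m_k}(z'_k)=w_0$ and $m_k\ge N$; each $z'_k$ lies outside~$U$ (as $w_0\notin U_{m_k}=f^{m_k}(U)$), hence in a complementary component $K^{(k)}$ of~$U$, and mapping a small disc about~$z'_k$ homeomorphically onto a disc about~$w_0$ inside $\mathrm{int}\,L_{m_k}$ produces a disc about~$z'_k$ disjoint from~$U$, so $z'_k\in\mathrm{int}\,K^{(k)}$. As $z'_k\in J(f)$, part~(b) shows that each $K^{(k)}$ is of type~1. Finally, infinitely many $K^{(k)}$ are distinct: if some component $K'$ occurred infinitely often, then $\zeta=\lim z'_k\in\ol{K'}=K'$, whereas $\zeta\in K$ and $K\neq K'$ (since $K'$ is of type~1 and $K$ is not), contradicting the disjointness of distinct complementary components. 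This yields the required sequence.

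The step I expect to be most delicate is part~(a): although $f|_U$ is proper, one must check that $f$ carries the \emph{whole} of a bounded complementary component~$K$ of~$U$ onto a complementary component of~$U_1$ --- in particular that $f(K)$ misses~$U_1$, equivalently that no Fatou component mapping onto~$U_1$ is trapped inside a bounded complementary component of~$U$ --- and must track carefully, in the winding-number count, which complementary components of~$U$ lie inside the separating curve~$\gamma$. Once part~(a) is in place, parts~(b) and~(c) follow cleanly from the blowing up property of $J(f)$ together with the absence of exceptional points.
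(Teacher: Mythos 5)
Your parts (b) and (c) are essentially correct and, if anything, more carefully argued than the paper's own treatment: applying the blowing-up property to $\mathrm{int}\,K$ (legitimate, since $f$ has no exceptional points) gives the ``for all sufficiently large $n$'' clause of type~1 directly, and your use of a non-exceptional point $w_0\in J(f)$ in place of $0$ in part (c) guarantees that the approximating preimage points lie both in $J(f)$ and in the interiors of their complementary components, which is exactly the detail hidden in the paper's ``follows immediately''.

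The genuine gap is in part (a), and you have in effect flagged it yourself. After the correct preliminary observations ($f(\partial U)=\partial U_1$, and $f(\partial K)$ connected, hence contained in a single complementary component $K_1$ of $U_1$), the whole content of (a) is the equality $f(K)=K_1$, and your ``winding-number argument'' does not deliver either inclusion. For $f(K)\subset K_1$ you would need to know that no Fatou component contained in $K$ is mapped into $U_1$, which you explicitly list as something ``one must check''; and for $K_1\subset f(K)$ the estimate $n(f\circ\gamma,w)\ge 1$ for $w\in K_1$ only shows that every such $w$ has a preimage somewhere in $\mathrm{int}\,\gamma\setminus U$, that is, in \emph{some} bounded complementary component of $U$ inside $\gamma$; the curve $\gamma$ cannot be chosen to isolate $K$, since by part (c) other (type~1) complementary components accumulate at every point of $\partial K$, and several of them may well be mapped into the same component $K_1$, so the winding-number count cannot be localised to $K$. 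Since (b) and (c) rest on (a), this is a real gap rather than a routine verification. The paper's proof avoids a single winding-number count altogether: it takes a smooth exhaustion $(V_n)$ of $V=f(U)$, lets $H_n$ be the component of $\C\setminus\ol{V_n}$ containing the complementary component $L$ of $V$ containing $f(K)$, and pulls back to the component $G_n$ of $f^{-1}(H_n)$ containing $K$; then $\partial G_n\subset U$, $f:G_n\to H_n$ is proper with $f(\ol{G_n})=\ol{H_n}$, the sets $\ol{G_n}$ are nested and compact, and the compact connected set $K'=\bigcap_n\ol{G_n}$ satisfies $f(K')=\bigcap_n\ol{H_n}=L$ and $K'\subset\C\setminus U$, whence $K'=K$ and $f(K)=L$. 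Thus surjectivity comes from a nested-intersection limit rather than from one curve, and the localisation to $K$ comes from always selecting the preimage component containing $K$. If you wish to keep your set-up, you would need to replace the single separating curve $\gamma$ by such a shrinking family of Jordan neighbourhoods of $K_1$ pulled back around $K$, at which point your argument becomes the paper's.
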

\begin{proof}
The result of part~(a) may be known, but we include a proof for completeness. It is sufficient to prove this for the case $n=1$.

Since $f(K)$ is connected there is a unique complementary component, $L$ say, of $V=f(U)$ such that $f(K)\subset L$. We show that $f(K)=L$.

Let $V_n, n\in\N$, be a smooth exhaustion of~$V$; that is, $V_n$ are smooth domains such that $\overline{V_n}\subset V_{n+1}$, for $n\in\N$, and $\bigcup_{n\in\N} V_n=V$. Then~$L$ lies in a unique component, $H_n$ say, of $\C\setminus \overline{V_n}$, for each $n\in\N.$ Each $H_n$ is a Jordan domain with its boundary in~$V$, $\overline{H_{n+1}}\subset H_n$, for $n\in \N$, and $\bigcap_{n\in\N} \overline {H_n}=L$, since $(V_n)$ is an exhaustion of~$V$.

Now let $G_n$, $n=1,2,\ldots,$ denote the component of $f^{-1}(H_n)$ that contains~$K$. Then $\partial G_n \subset U$ and $f:G_n \to H_n$ is a proper map, so $\overline{G_{n+1}}\subset G_n$, for $n\in\N$. Then $K'=\bigcap_{n\in\N} \overline {G_n}$ is a compact connected set such that $f\left(K'\right)=L$. Indeed, $f(\ol{G_n})=\ol{H_n}$, for $n\in\N$, so the inclusion
\[
f\left(\bigcap_{n\in\N}\ol{G_n}\right)\subset \bigcap_{n\in\N}\ol{H_n}
\]
is clear, and the reverse inclusion also holds since if $w\in \bigcap_{n\in\N}\ol{H_n}$, then there exists $z_n\in \ol{G_n}$ such that $f(z_n)=w$, for all $n\in\N$, and hence $f(z)=w$ for some $z\in \bigcap_{n\in\N}\ol{G_n}$, by compactness.

We now show that $K'=K$. Clearly, $K\subset K'$ and also $K' \subset \C\setminus U$, since $f(K')=L \subset \C\setminus V$, so $K'=K$ and hence $f(K)=L$, as required.

To prove part~(b), suppose that the interior of~$K$ meets $J(f)$. Since $f$ has no exceptional values, the backward orbit of~0 accumulates at every point of $J(f)$, so we deduce that int\,$K$ must contain a point~$z$ such that for some $n\in\N$ we have $f^n(z)=0$. It follows by part~(a) that $f^n(K)$ is the complementary component of $f^n(U)$ that contains~0.

Part~(c) also follows immediately from the fact that the backward orbit of~0 accumulates at every point of $J(f)$.
\end{proof}

\section{complementary components of multiply connected\\ wandering domains}
\label{mcwds-bdrycmpnts}\setcounter{equation}{0}
In this section we prove the following result, which arose from discussions with Markus Baumgartner and Walter Bergweiler. We use Theorem~\ref{inner} in the proof of Theorem~\ref{varied}, and it also has considerable interest in its own right. For example, it shows that in some cases a \mcfc\ can have uncountably many complementary components of type~3, that is, ones with no interior. It was not previously known whether such complementary components could exist.

\begin{theorem}\label{inner}
Let~$f$ be a \tef\ with a \mcfc\ $U$, let~$N$ be so large that the inner and outer connectivity of $U_N$ are defined, and let $B_N$ be defined as in Lemma~\ref{large-annuli}.
\begin{itemize}
\item[(a)]
If $U_N$ has infinite inner connectivity, then
\begin{itemize}
\item[(i)]
$U_N$ has uncountably many complementary components that accumulate at the inner boundary component of~$U_N$;
\item[(ii)]
$U_N$ has uncountably many complementary components with no interior (type~3), as has~$U$;
\item[(iii)]
the outer connectivity of $U_N$ is either~2 or uncountable.
\end{itemize}
\item[(b)]
If $U_N$ has finite inner connectivity, then the outer connectivity of~$U_N$ is finite or countable, and the complementary components accumulate nowhere in~$\overline{U_N}$ except possibly at the outer boundary component of~$U_N$.
\end{itemize}
\end{theorem}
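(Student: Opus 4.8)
\textbf{Proof proposal for Theorem~\ref{inner}.}

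The plan is to exploit two facts from the previous section: the absorbing annulus $B_N=A(r_N^{a_N},r_N^{b_N})\subset U_N$ from Lemma~\ref{large-annuli}, and the nested sequence of annuli $B'_n\subset U_n$ with $f(B'_n)\subset B'_{n+1}$ from Lemma~\ref{nested}, together with Lemma~\ref{types}\,(c), which says the backward orbit of~$0$ is dense in $J(f)$ and so every point of $\partial K$ for a type~2 or type~3 complementary component~$K$ is approximated by type~1 complementary components. The key structural observation is that, because $U_{n+1}$ surrounds $U_n$ for large~$n$ and $\mathrm{dist}(\partial U_n,0)\to\infty$, the complementary components of $U_N$ that lie inside $B_N$ (that is, the `inner' ones, which are encircled by the absorbing annulus) are permuted and combined under iteration in a way that can be tracked through the orbit; the inner connectivity being infinite forces infinitely many such inner complementary components at every level $n\ge N$, and these nest in a Cantor-like fashion.

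First I would prove part~(a)(i). Since $U_N$ has infinite inner connectivity, by Lemma~\ref{event-conn}\,(a) every $U_n$, $n\ge N$, has infinitely many complementary components lying in $\{z:|z|<r_n\}$, hence (shrinking to below the inner radius of the absorbing annulus $B_n$) infinitely many complementary components strictly inside $B_n$. Pull these back along the orbit using Lemma~\ref{types}\,(a): a complementary component $K$ of $U_n$ inside $B_n$ has, inside each $U_m$ with $N\le m\le n$, a complementary component $K^{(m)}$ with $f(K^{(m)})=K^{(m+1)}$ and $K^{(n)}=K$. Because there are infinitely many such $K$ at every level~$n$ and the map from level $m$ components to level $m+1$ components is finite-to-one (indeed, the preimage of a single complementary component under~$f$ restricted near~$U_m$ consists of finitely many complementary components, since $f$ is proper on the relevant domains as in the proof of Lemma~\ref{types}), a König's-lemma / compactness argument produces an infinite rooted tree of nested choices; the corresponding nested intersections of closures yield uncountably many distinct complementary components of $U_N$. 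That these accumulate at $\partial_{\rm inn}U_N$ follows from tracking sizes: under pull-back the components are squeezed against the inner boundary because $\mathrm{dist}(\partial U_m,0)\to\infty$ while the pulled-back pieces must remain inside $\{|z|<r_m\}$ and eventually inside any neighbourhood of the inner complementary component of~$U_m$.

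For part~(a)(ii) I would argue that among the uncountably many complementary components of $U_N$ produced in~(i), only countably many can have nonempty interior: each such component with interior contains either a point eventually mapped to~$0$ (type~1) or a Fatou component (type~2), and in both cases it contains a point with a countable `address', so there are at most countably many of them --- hence uncountably many are type~3. That~$U$ itself has uncountably many type~3 complementary components then follows by pulling back $N$ further steps using Lemma~\ref{types}\,(a) and the fact (Lemma~\ref{basic-mcwds}\,(f)) that~$f$ has no exceptional points, so nothing degenerates. For part~(a)(iii) and part~(b), the same tree-counting dichotomy applied on the \emph{outer} side gives the result: either the outer complementary structure is essentially trivial (connectivity~2, i.e.\ only the outer boundary component is relevant there) or the branching is genuinely infinite, forcing uncountably many; and in case~(b), finite inner connectivity means (Lemma~\ref{event-conn}\,(b)) the inner side stabilizes, so no accumulation of complementary components can occur except at $\partial_{\rm out}U_N$, and the outer count, being the image of a finitely-branching process, is at most countable.

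The main obstacle I expect is making rigorous the claim that complementary components of $U_N$ lying inside the absorbing annulus behave coherently under pull-back --- specifically, showing the pull-back map on inner complementary components is well-defined, finite-to-one, and that nested intersections of closures are genuinely nonempty, connected, and pairwise distinct for distinct branches of the tree. This requires care with the properness arguments from the proof of Lemma~\ref{types}\,(a) and with separating distinct branches (two branches that agree up to level $m$ but differ at level $m+1$ must give complementary components of $U_N$ that are disjoint, which needs the combinatorial order-preservation of components around the annulus to be stable under $f$). The size/accumulation estimates in~(i), using Lemma~\ref{large-annuli} to control where pulled-back pieces can sit, are the other delicate point, but those are quantitative rather than conceptual.
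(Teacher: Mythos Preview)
Your proposal has a genuine gap in the core step, part~(a)(i). You set up a forward map from inner complementary components at level~$m$ to those at level~$m+1$, note that each level has infinitely many, and assert that a K\"onig's-lemma/compactness argument on the resulting tree yields uncountably many complementary components of~$U_N$. But this tree structure does not do what you need. Each complementary component of $U_N$ determines a \emph{unique} forward orbit of components, so the number of components at level~$N$ is simply the cardinality of the set of level-$N$ nodes --- there is no branching in the direction that would manufacture new components. Knowing that there are infinitely many at each level and that the forward map is finite-to-one tells you only that level~$N$ has infinitely many components, which is the hypothesis, not the conclusion. Your phrase ``nested intersections of closures'' has no clear referent: distinct complementary components are disjoint, not nested, and pull-backs from different levels~$n$ need not be comparable. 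Nothing in your argument distinguishes countably infinite from uncountable.

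The paper's argument runs in the opposite direction and is what you are missing. Starting from a single Jordan curve $\gamma_0\subset U_N$ that surrounds a complementary component but not~$0$, one iterates \emph{forward} until the image lies in an absorbing annulus $B_{n_1}$ and winds round~$0$; inside that image one then chooses \emph{two} disjoint Jordan curves in $U_{n_1}$ with the same property, and repeats. This produces a genuine binary tree of nested closed topological discs $E_{n_m}$ at successive levels, with $f^{n_{m+1}-n_m}(E_{n_m})\supset E_{n_{m+1}}$ for either choice at stage~$m+1$. Lemma~\ref{toplemma} then furnishes, for every binary itinerary $(j_m)$, a point $\zeta\in J(f)$ whose orbit follows that itinerary; the complementary component $K_\zeta$ of $U_N$ containing~$\zeta$ must (via Lemma~\ref{types}(a)) satisfy the same itinerary, so distinct itineraries give distinct components. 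That is where uncountability actually comes from. Your accumulation argument is also off: accumulation at $\partial_{\rm inn}U_N$ is proved by contradiction --- a separating Jordan curve around only the inner boundary would, after iteration, wind round~$0$ while its interior meets only one complementary component, contradicting infinite inner connectivity at the later level --- not by a size estimate on pull-backs. Your sketches for (a)(iii) and (b) invoke ``the same tree-counting dichotomy'', but since that dichotomy was never established, those parts inherit the gap; the paper handles them directly by iterating a single Jordan curve forward and counting how many complementary components its image can surround. Your reasoning for (a)(ii), that at most countably many components can have nonempty interior, is fine.
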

As far as we know, it is an open question whether a {\mcfc} can have type~2 complementary components and also whether it can have complementary components that are {\it singleton} sets.

We have the following corollary of Theorem~\ref{inner} and Lemma~\ref{types}\,(c), which was also given in Baumgartner's PhD thesis \cite[Theorem~3.1.25]{mB15}; see the remark after Lemma~\ref{event-conn} for the meaning of `eventual inner connectivity'.

\begin{corollary}\label{markus}
Let~$f$ be a \tef\ with a \mcfc\ $U$. Then~$U$ has eventual inner connectivity~2 if and only if all the complementary components of~$U$ are of type~1.
\end{corollary}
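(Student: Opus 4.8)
The plan is to prove Corollary~\ref{markus} by combining the dichotomy in Theorem~\ref{inner} with the description of complementary component types in Lemma~\ref{types}. Since eventual inner connectivity is well defined by the remark following Lemma~\ref{event-conn}, and since by Lemma~\ref{types}\,(a) the number and types of complementary components of $U$ and of $U_N=f^N(U)$ are in natural correspondence (indeed $f^N$ maps the complementary components of $U$ bijectively onto those of $U_N$, preserving type, as $f^N$ is proper from each complementary component of $U$ onto the corresponding one of $U_N$), it suffices to work with $U_N$ for~$N$ large enough that the inner and outer connectivities of $U_N$ are defined and the trichotomy of Lemma~\ref{types}\,(b) applies to all complementary components of $U_N$.

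First I would prove the forward implication by contraposition. Suppose $U$ does \emph{not} have eventual inner connectivity~2; then by Lemma~\ref{event-conn} it has infinite eventual inner connectivity, so for~$N$ large $U_N$ has infinite inner connectivity. By Theorem~\ref{inner}\,(a)(ii), $U_N$ then has (uncountably many) complementary components with no interior, that is, of type~3. Transporting back via $f^{-N}$, the domain $U$ has a complementary component of type~3, so not all complementary components of $U$ are of type~1. This establishes: if all complementary components of $U$ are of type~1, then $U$ has eventual inner connectivity~2.

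For the converse, suppose $U$ has eventual inner connectivity~2, so for~$N$ large $U_N$ has finite inner connectivity. By Theorem~\ref{inner}\,(b), the outer connectivity of $U_N$ is then finite or countable, and the complementary components of $U_N$ accumulate nowhere in $\overline{U_N}$ except possibly at the outer boundary component of $U_N$. I claim this forces every bounded complementary component $K$ of $U_N$ to be of type~1. If $K$ were of type~2 or type~3, then by Lemma~\ref{types}\,(c) every point of $\partial K$ is a limit of points lying in distinct type~1 complementary components; in particular the complementary components of $U_N$ accumulate at $\partial K$. But $\partial K$ is not contained in the outer boundary component $\partial_{\rm out}U_N$ (since $K$ is a bounded complementary component distinct from the unbounded one — and for $n$ large $U_n$ is bounded with $U_{n+1}$ surrounding $U_n$ by Lemma~\ref{basic-mcwds}\,(b), so the points of $\partial K$ have bounded modulus while $\partial_{\rm out}U_N$ is the boundary of the unbounded complementary component), contradicting the conclusion of Theorem~\ref{inner}\,(b). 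Hence every complementary component of $U_N$ is of type~1, and transporting back via $f^{-N}$ and using Lemma~\ref{types}\,(a), every complementary component of $U$ is of type~1. (One should note that the unbounded complementary component issue does not arise: ``complementary component'' here means bounded complementary component, consistent with the usage in Lemma~\ref{types}, since $U$ is bounded and has exactly one unbounded complementary component which is not classified by the trichotomy.)

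The main obstacle is the geometric step in the converse direction: ruling out that a type~2 or type~3 complementary component $K$ of $U_N$ could have its boundary lying in the outer boundary component $\partial_{\rm out}U_N$, which would allow the accumulation of type~1 components at $\partial K$ guaranteed by Lemma~\ref{types}\,(c) to be ``hidden'' in the exceptional set permitted by Theorem~\ref{inner}\,(b). This is handled by the observation that a bounded complementary component $K$ distinct from the unbounded complementary component has $\partial K$ disjoint from $\partial_{\rm out}U_N$ — these are boundaries of distinct complementary components of the domain $U_N$, hence genuinely different subsets of $\C$ — so the accumulation forced by Lemma~\ref{types}\,(c) genuinely contradicts Theorem~\ref{inner}\,(b). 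Everything else is bookkeeping via the conjugacy $f^N$ between the complementary-component structures of $U$ and $U_N$.
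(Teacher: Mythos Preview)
Your proof is correct and follows exactly the approach the paper indicates (deducing the corollary from Theorem~\ref{inner} and Lemma~\ref{types}\,(c)); the paper gives no detailed argument, and yours fills it in appropriately.

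Two minor points. First, the bijectivity of $K\mapsto f^N(K)$ on complementary components is neither needed nor justified by properness; all you actually use is that if $K$ is a bounded complementary component of $U$ then $f^N(K)$ is one of $U_N$ (Lemma~\ref{types}\,(a)) and that type~1 pulls back, which follows from the short observation that $f^N(\operatorname{int}K)=\operatorname{int}f^N(K)$ (any $z\in\partial K$ is a limit of points of $U$, so $f^N(z)\in\overline{f^N(U)}$ and hence $f^N(z)\notin\operatorname{int}f^N(K)$) together with complete invariance of $J(f)$. Second, your parenthetical about ``bounded modulus'' is off---$\partial_{\rm out}U_N$ is bounded too---but your later sentence gives the right reason: $\partial K$ and $\partial_{\rm out}U_N$ lie in distinct (closed) complementary components of $U_N$ and are therefore disjoint. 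Note also that Theorem~\ref{inner}\,(a)(ii) already states that $U$ itself has type~3 components, so no transport is needed in the forward direction.
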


Examples of {\tef}s with {\mcfc}s having either infinite inner connectivity or finite inner connectivity were given in \cite[Remarks following Theorem 1.3]{BZ11}. Other examples with finite inner connectivity were given in \cite{KS08} and in \cite{cB}.

In the proof of Theorem~\ref{inner} we use the following simple topological lemma; see \cite[Lemma~1]{RS09a}.
\begin{lemma}\label{toplemma}
Let $E_n$, $n\ge 0$, be a sequence of compact sets in $\C$ and $f:\C\to\hat{\C}$ be a continuous function such that
\begin{equation}\label{contains}
f(E_n)\supset E_{n+1},\quad\text{for } n\ge 0.
\end{equation}
Then there exists~$\zeta$ such that $f^n(\zeta)\in E_n$, for $n\ge 0$.

If~$f$ is also meromorphic and $E_n\cap J(f)\ne\emptyset$, for $n\ge 0$, then there exists $\zeta\in J(f)$ such that $f^n(\zeta)\in E_n$, for $n\ge 0$.
\end{lemma}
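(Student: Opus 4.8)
The plan is to realise the prescribed itinerary by a nested-compactness argument, organised as a step-by-step pull-back so that the (possible) poles of~$f$ never intervene. Throughout we use that the $E_n$ are non-empty, which is of course implicit in the statement. For $N\ge 0$ let
\[
G_N=E_0\cap f^{-1}\bigl(E_1\cap f^{-1}(E_2\cap\cdots\cap f^{-1}(E_N)\cdots)\bigr),
\]
where $f^{-1}(A)=\{z\in\C:f(z)\in A\}$ for $A\subset\C$; equivalently, $G_N$ is the set of $z\in E_0$ for which $f^k(z)\in E_k$ for $0\le k\le N$ (note that each $f^k(z)$ here automatically lies in $\C$, since $f^{j}(z)\in E_j\subset\C$ for $j<k$, so no pole of~$f$ is ever met). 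Then $G_{N+1}\subset G_N$ is immediate, and each $G_N$ is compact, being obtained from the compact set $E_0$ by repeatedly intersecting with sets of the form $f^{-1}(\,\cdot\,)$, which are closed in~$\C$ because $f:\C\to\hat\C$ is continuous and each $E_n$ is closed in~$\hat\C$.

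Next I would check that each $G_N$ is non-empty. Pick any $w_N\in E_N$; by \eqref{contains} applied successively, $E_N\subset f(E_{N-1})$ gives $w_{N-1}\in E_{N-1}$ with $f(w_{N-1})=w_N$, and continuing we obtain $w_0\in E_0$ with $f^k(w_0)=w_k\in E_k$ for $0\le k\le N$, so $w_0\in G_N$. Hence $(G_N)_{N\ge0}$ is a nested sequence of non-empty compact sets, so $\bigcap_{N\ge0}G_N\ne\emptyset$; any $\zeta$ in this intersection satisfies $f^n(\zeta)\in E_n$ for all $n\ge0$, which proves the first assertion.

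For the second assertion, with $f$ meromorphic and $E_n\cap J(f)\ne\emptyset$ for all~$n$, I would apply the first part with $E_n$ replaced by the compact sets $E_n\cap J(f)$. The only thing to verify is that the hypothesis \eqref{contains} is preserved, i.e. $f\bigl(E_n\cap J(f)\bigr)\supset E_{n+1}\cap J(f)$: given $w\in E_{n+1}\cap J(f)$, by \eqref{contains} there is $z\in E_n$ with $f(z)=w$, and $z\in J(f)$ by the backward invariance of the Julia set. The point $\zeta$ furnished by the first part then lies in $E_0\cap J(f)\subset J(f)$ and satisfies $f^n(\zeta)\in E_n$ for all~$n$, as required.

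The argument is essentially routine; the only place calling for a little care is the decision to phrase everything via a one-step pull-back rather than via the global iterates $f^k$, which avoids having to worry about where $f^k$ is defined when~$f$ has poles, together with the use of backward invariance of $J(f)$ in the last step.
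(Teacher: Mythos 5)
Your proof is correct. The paper itself does not prove this lemma but quotes it from \cite[Lemma~1]{RS09a}, and your argument --- realising the itinerary as a nested intersection of the non-empty compact sets $G_N=\{z\in E_0: f^k(z)\in E_k,\ 0\le k\le N\}$, with surjectivity along the itinerary supplied by pulling back a point of $E_N$ step by step via \eqref{contains}, and then handling the Julia set case by replacing $E_n$ with $E_n\cap J(f)$ and using backward invariance of $J(f)$ --- is essentially the standard proof given there.
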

\begin{proof}[Proof of Theorem~\ref{inner}] (a)\; Put $n_0=N$. Since $U_{n_0}$ has infinite inner connectivity and $J(f)$ is closed, we deduce from Lemma~\ref{Newman}\,(b) that there exists a Jordan curve $\gamma_{0}$ in~$U_{n_0}$ which surrounds at least one component of $U_{n_0}^c$ and does not surround~0. Then, by Lemma~\ref{large-annuli}, together with the argument principle, there exists $n_1\in \N$ such that $f^{n_1-n_0}(\gamma_{0})$, lies in the absorbing annulus $B_{n_1}$ and winds at least once round~0. By Lemma~\ref{event-conn}, we can take disjoint Jordan curves $\gamma_{n_1,1}$ and $\gamma_{n_1,2}$ in~$U_{n_1}$, with disjoint interiors, which each surround at least one component of $U_{n_1}^c$, do not surround 0, and lie in the bounded component of $B^c_{n_1}$, and then repeat the process above for each of $\gamma_{n_1,1}$ and $\gamma_{n_1,2}$.

Continuing in this way, we can construct a strictly increasing sequence of positive integers $(n_m)$ and, for $m\in\N$, disjoint Jordan curves $\gamma_{n_m,1}$ and $\gamma_{n_m,2}$, with disjoint interiors, which each surround at least one component of $U_{n_m}^c$, do not surround~0, and lie in the bounded component of $B^c_{n_m}$, such that
\[
f^{n_{m+1}-n_{m}}(\gamma_{n_m,j})\;\text{ surrounds }\; \gamma_{n_{m+1},k},\qfor m\in\N,\;j,k=1,2.
\]

We now show that there exist points in $J(f)$ whose images under $f^{n_m}$ lie in the interior of any specified choice of the Jordan curves $\gamma_{n_m,j_m}$, for $m\in \N$, $j_m\in \{1,2\}$. To prove this we consider the sequence of compact sets $(E_n)$, defined as follows:
\begin{itemize}
\item
for $n=n_m, m\ge 0$, we take $E_n$ to be the union of $\gamma_{n_m,j_m}$ and int\,$\gamma_{n_m,j_m}$;
\item
for $n_m<n<n_{m+1}$,  $m\ge 0$, we take $E_n$ to be $f^{n-n_m}(E_{n_m})$.
\end{itemize}
It is clear that the sequence $(E_{n_0+k})$, $k\ge 0$, satisfies all the hypotheses of Lemma~\ref{toplemma}, so there exists~$\zeta \in J(f)$ such that $f^{k}(\zeta)\in E_{n_0+k}$ for $k\ge 0$ and, in particular,
\begin{equation}\label{zetaorbit}
f^{n_m-n_0}(\zeta) \;\text{ is surrounded by }\; \gamma_{n_m,j_m},\qfor m\in \N.
\end{equation}
Since we have two choices of Jordan curve at each stage (after the first), this gives rise to uncountably many points of $J(f)$, each of which has the property that its images lie in the interiors of the specified Jordan curves. Each such point, $\zeta$ say, must be contained in a complementary component, $K_{\zeta}$ say, of~$U_{n_0}$, and we claim that if \eqref{zetaorbit} holds, then
\begin{equation}\label{Eorbit}
f^{n_m-n_0}(K_{\zeta}) \;\text{ is surrounded by }\; \gamma_{n_m,j_m},\qfor m\in\N,
\end{equation}
from which it follows that all such complementaryc components $K_{\zeta}$, arising from points with different `itineraries' $(j_m)$, are distinct. Hence $U_{n_0}=U_N$ has uncountably many complementary components, as required.

To deduce \eqref{Eorbit} from \eqref{zetaorbit}, we note that, for $m\in\N$, each complementary component of~$U_{n_0}$ must map under $f^{n_m-n_0}$ onto a complementary component of $U_{n_m}$, by Lemma~\ref{types}. It follows, in particular, that the complementary component $K_{\zeta}$ must map under $f^{n_m-n_0}$ to the complementary component of $U_{n_m}$ that contains $f^{n_m}(\zeta)$, so this complementary component of $U_{n_m}$ must be surrounded by $\gamma_{n_m,j_m}$. This proves \eqref{Eorbit}.

These complementary components of $U_{n_0}$ must accumulate at the inner boundary component of~$U_{n_0}$ for otherwise we could find a Jordan curve $\gamma \subset U_{n_0}$ that surrounds the inner boundary component of $U_{n_0}$ but no other boundary components. For~$n$ sufficiently large $f^n(\gamma)$ must lie in $B_{n_0+n}$ and wind at least once round~0, which contradicts the fact that $U_{n_0+n}$ has infinite inner connectivity. This proves part~(a)(i) and part~(a)(ii) follows immediately. To prove part~(a)(iii), we observe that if a complementary component, $K$ say, of $U_{n_0}$ exists outside $B_{n_0}$ and $\gamma$ is a Jordan curve in $U_{n_0}$ that surrounds~$K$, then for~$n$ sufficiently large $f^n(\gamma)$ must lie in $B_{n_0+n}$ and wind at least once round~0, and hence $\gamma$ must surround uncountably many complementary components of $U_{n_0}$.

To prove part~(b) we suppose that the inner connectivity of $U_N$ is finite. Let~$\gamma$ be a Jordan curve in~$U_N$ that surrounds at least one boundary component of $U_N$. Then there exists $n\in\N$ such that $f^n(\gamma)$ lies in $B_{N+n}$ and winds at least once round~0. Then $f^n(\gamma)$ winds round at most finitely many components of $U^c_{N+n}$, so~$\gamma$ surrounds at most finitely many components of $U_N^c$. It follows that~$U_N$ has at most countably many complementary components, so the outer connectivity of $U_N$ is at most countable, and also that the complementary components of $U_N$ do not accumulate at any point of $\overline{U_N}$ except possibly at the outer boundary component of $U_N$.
\end{proof}

\section{Proofs of Theorems~\ref{varied} and~\ref{mc-IJ}}
\setcounter{equation}{0}\label{varied-mc-IJ}
In this section we prove our results about the possible numbers of components of $A_R(f)\cap J(f)$, $A(f)\cap J(f)$ and $I(f)\cap J(f)$ in the case when~$f$ has a {\mcfc}. We begin by proving Theorem~\ref{varied}.
\begin{proof}[Proof of Theorem~\ref{varied}]
Part~(a) states that if~$U$ is a \mcfc\ of a \tef\ $f$ with infinite inner connectivity and $R>R(f)$, then $A_R(f)\cap J(f)$ and $A(f)\cap J(f)$ have uncountably many components. By Lemma~\ref{basic-mcwds}~(c) we can take~$N$ so large that $\overline{U_N}=\overline{f^N(U)}\subset A_R(f)$. By Theorem~\ref{inner}~(a)(i), we know that $U_N$ has uncountably many complementary components. The boundaries of these complementary components are subsets of $A_R(f)\cap J(f)$ and $A(f)\cap J(f)$, so the result follows.

Part~(b) states that there exists a {\tef}~$f$ with a {\mcfc} and $R>R(f)$ such that $A_R(f)\cap J(f)$ and $A(f)\cap J(f)$ each have only countably many components. We show that this is the case for a remarkable example constructed by Bishop \cite{cB} of a \tef\ whose Julia set has dimension~1. In this example, there is a \mcfc\ $U$ whose forward orbit $U_n=f^n(U)$ has the following topological properties. For $n\ge 0$,
\begin{itemize}
\item the boundary components of $U_n$ are all Jordan curves;
\item the inner boundary component of $U_{n+1}$ is identical to the outer boundary component of $U_n$;
\item the outer connectivity of $U_n$ is countably infinite and the inner connectivity is~2.
\end{itemize}
By Lemma~\ref{Rn}, we can also assume that there exists $R>R(f)$ such that
\begin{equation}\label{Bishop-Rn}
\{z:|z|=M^n(R)\} \subset U_n,\qfor n\ge 0.
\end{equation}
In fact Bishop's proof in \cite{cB} gives the property \eqref{Bishop-Rn} as a part of the construction.

We claim that any point $\zeta\in A(f)\cap J(f)$ must lie in one of the countably many boundary components of one of the domains $U_n$ or the pre-image of such a boundary component. If not, then it follows, by the properties of the wandering domain given above, that, for all sufficiently large $n\in\N$, the point $f^n(\zeta)$ must lie in  the interior of a type~1 complementary component of $U_{k(n)}$. Thus, by Lemma~\ref{types}\,(b)(i), there exists a sequence $(n_j)$ such that
\[
k(n_j)\le k(n_{j-1}) +n_j-n_{j-1}-1,\qfor j \in \N,
\]
and hence
\[
k(n_j)-n_j \le d - j,\qfor j \in\N,
\]
where $d=k(n_0)-n_0$. Thus
\[
f^{n_j}(\zeta)\in \widetilde{U}_{d+n_j-j},\qfor j\in \N.
\]
Here the notation~$\widetilde{V}$ denotes the union of~$V$ with its bounded complementary components. Therefore, by \eqref{Bishop-Rn},
\[
|f^{n_j}(\zeta)|\le M^{d+n_j-j+1}(R),\qfor j\in\N,
\]
which implies that $\zeta\notin A(f)$, a contradiction. Hence there are only countably many components of $A(f)\cap J(f)$, and similarly only countably many components of $A_R(f)\cap J(f)$. This completes the proof of Theorem~\ref{varied}.
\end{proof}
We remark that a large class of \tef s with topological properties similar to Bishop's example was constructed by Baumgartner~\cite{mB15}.

Finally we prove Theorem~\ref{mc-IJ}. Here we use a variation of an argument we introduced in \cite{RS09a} and~\cite{RS15}.
\begin{proof}[Proof of Theorem~\ref{mc-IJ}]
Theorem~\ref{mc-IJ} states that for any {\tef}~$f$ with a {\mcfc}, the set $I(f)\cap J(f)$ has uncountably many components. Given such a function~$f$, let $B'_n$, $n\ge 0$, be the open annuli given by Lemma~\ref{nested} and for each $n\ge 0$ let $E_n$ denote the closed annulus lying precisely between $B'_n$ and $B'_{n+1}$. Then, by \eqref{B'n},
\[
\partial f(E_n)\subset f(\partial E_n)\subset \overline{B'_{n+1}}\cup \overline{B'_{n+2}}, \;\;\text{for } n\ge 0,
\]
so
\begin{equation}\label{all}
f(E_n)\supset E_{n+1},\qfor  n\ge 0.
\end{equation}
Also, since the annuli $B'_n$ lie in distinct Fatou components of~$f$, we deduce that
\begin{equation}\label{EintJ}
E_n\cap J(f) \ne \emptyset,\qfor n\ge 0.
\end{equation}

Next we put
\[
E'_n=B'_n\cup E_n\cup B'_{n+1},\qfor n\ge 0,
\]
and let $F_n$ denote the bounded component of $\C\setminus E'_n$. Then it follows from \eqref{B'n} that, for each $n\ge0$, we have exactly one of the following possibilities:
\begin{equation}\label{onto}
f(E'_n)\subset E'_{n+1},
\end{equation}
or
\begin{equation}\label{notonto}
f(E'_n)\supset F_{n+1},\;\;\text{so }\; f(E_n)\supset F_{n+1}\supset E_n.
\end{equation}

If~(\ref{onto}) holds for all $n\ge N$, say, then each $E'_n$, $n\ge N$, is contained in the Fatou set of~$f$, by Montel's theorem, and this contradicts the fact that each $E_n$ and hence each $E'_n$ meets $J(f)$. Thus there is a strictly increasing sequence $n_j\ge 0$ such that \eqref{notonto} holds for $n=n_j$, $j\in\N$, so
\begin{equation}\label{some}
f(E_{n_j})\supset E_{n_j},\qfor j\in \N.
\end{equation}
We now observe that there are uncountably many increasing sequences~$s$ of non-negative integers, each of which includes all the non-negative integers and some repetitions of the integers $n_j$, $j\in \N$. For each of these sequences, the properties \eqref{all}, \eqref{EintJ} and \eqref{some} allow us to apply  Lemma~\ref{toplemma} to give a point in $J(f)$ whose orbit passes through the annuli $E_n$ in a manner determined by the sequence~$s$. For two such distinct sequences~$s$, we obtain (since the annuli $B'_n$ all lie in $F(f)$) two distinct components of $I(f)\cap J(f)$. Since there are uncountably many such distinct sequences, there are uncountably many distinct components of $I(f)\cap J(f)$, as required.
\end{proof}

\section{Open questions}
\setcounter{equation}{0}\label{outstanding}
In this final section we discuss several interesting questions, which arise in connection with our new results.

The main question that we sought to address in this paper was the following.

\begin{question}\label{qn1}
Let $f$ be a {\tef}. For each of the sets $I(f)$, $A(f)$ and $A_R(f)$, where $R>R(f)$, is it the case that the set is either connected or it has uncountably many components?
\end{question}

Theorem~\ref{I(f)} gives a partial answer to this question for $I(f)$, since it states that if $I(f)^c$ contains an unbounded continuum, in particular if $I(f)$ is disconnected, then the set $I(f)\setminus D$, where~$D$ is any open disc that meets $J(f)$, has uncountably many unbounded components, and Theorem~\ref{A(f)} gives a similar partial result for $A(f)$. These results raise the following question about $I(f)$, and there is a similar question about $A(f)$.

\begin{question}\label{qn2}
Does there exist a {\tef} $f$ such that $I(f)$ is connected and $I(f)^c$ contains an unbounded continuum?
\end{question}

The function $f(z)=e^z$ has the property that $I(f)$ is connected and there is an unbounded connected set in the complement of $I(f)$; see \cite[Example~2]{OS}. Note that in our proof of Theorem~\ref{I(f)} we make strong use of the fact that the unbounded connected set~$\Gamma$ in $I(f)^c$ is closed.

Theorem~\ref{AR(f)} gives a complete answer to Question~\ref{qn1} for $A_R(f)$ for many values of~$R$, and furthermore shows that for these values of~$R$ if $A_R(f)$ is connected, then it is a spider's web. In this situation, we can easily deduce that if $A_R(f)$ is disconnected, then it has uncountably many components for {\it all} $R>R(f)$ whenever the answer to the following intriguing question is `yes'.

\begin{question}\label{qn3}
Let $f$ be a {\tef} and let $A_0$ be a component of $A_R(f)$, for some $R>R(f)$. Must $A_0$ meet $A_{R'}(f)$ for every $R'>R$?
\end{question}

If the answer to Question~\ref{qn3} is `yes', then with somewhat more work we can also show that if $I(f)$ is disconnected, then $I(f)$ has uncountably many components (not necessarily all unbounded).

We now state two questions about {\mcfc}s.

\begin{question}\label{qn4}
Let $f$ be a {\tef} and let~$U$ be a {\mcfc} of~$f$. Is it possible for~$U$ to have complementary components of type~2, that is, ones with interior that is a union of Fatou components?
\end{question}

Corollary~\ref{markus} shows that if such type~2 components exist, then~$U$ must have infinite inner connectivity.

Finally, in Theorem~\ref{inner}\,(a)\,(ii) we showed that if a {\tef}~$f$ has a {\mcfc}~$U$ with infinite inner connectivity, then~$U$ has uncountably many type~3 complementary components, that is, ones with no interior.

\begin{question}\label{qn5}
Let $f$ be a {\tef} and let~$U$ be a {\mcfc} of~$f$ with infinite inner connectivity. Is it possible, or indeed necessary, that~$U$ has (uncountably many) singleton complementary components?
\end{question}

{\it Acknowledgement}\quad
The authors are grateful to Markus Baumgartner and Walter Bergweiler for discussions which led to Theorem~\ref{varied}\,(a) and the closely related Theorem~\ref{inner}.

\end{document}